\journal{Journal of Multivariate Analysis}
\newtheorem{definition}{Definition}[section]
\newtheorem{theorem}{Theorem}[section]
\newtheorem{lemma}{Lemma}[section]
\newtheorem{remark}{Remark}
\begin{document}

\begin{frontmatter}

\title{An Identity for Expectations and Characteristic Function of Matrix Variate Skew-normal Distribution with 
Applications to Associated Stochastic Orderings}

\author[a]{Tong Pu}
\author[b]{Narayanaswamy Balakrishnan}
\author[a]{Chuancun Yin\corref{mycorrespondingauthor}}

\address[a]{School of Statistics, Qufu Normal University, Qufu 273165, Shandong, China}
\address[b]{Department of Mathematics and Statistics, McMaster University, Hamilton, Ontario, Canada}
\cortext[mycorrespondingauthor]{Corresponding author. Email address: \url{ccyin@qfnu.edu.cn}\\
Email addresses: \url{putong_hehe@aliyun.com} (T. Pu) and \url{bala@mcmaster.ca} (N. Balakrishnan).
}

\begin{abstract}
We establish an identity for $Ef\left( \boldsymbol{Y}\right) - Ef\left( \boldsymbol{X}\right)$, 
when $\boldsymbol{X}$ and $\boldsymbol{Y}$ both have matrix variate skew-normal distributions and the function $f$ fulfills some weak conditions. 
The characteristic function of matrix variate skew normal distribution is then derived. 
Finally, we make use of it to derive some necessary and sufficient conditions for the comparison of matrix variate skew-normal distributions under six different orders, such as usual stochastic order, 
convex order, increasing convex order, upper orthant order, directionally
convex order and supermodular order. 
\end{abstract}

\begin{keyword} 
Characteristic function \sep 
Integral order \sep 
Matrix variate skew-normal distributions\sep 
Stochastic comparisons.

\end{keyword}
\end{frontmatter}

\section{Introduction}
Stochastic orders, which are partial orders on a set of random variables, 
provide methods to describe intuitively random variables being larger, riskier or more dependent. 
Stochastic orders of variables have found key applications in such diverse fields as actuarial science, economics, comparison of experiments, reliability analysis and queueing theory. 
Different kinds of stochastic orders possess different properties, characterizations and applications, and 
interested readers may refer to \cite{shaked2007stochastic} and \cite{denuit2006actuarial} for elaborate details. 
\par The goal of this paper is to study integral stochastic orders for matrix variate skew-normal distributions. 
Given two random variables $\boldsymbol{X}$ and $\boldsymbol{Y}$, and a class of measurable functions $\boldsymbol{F}$, 
integral stochastic orders, which were introduced by \cite{muller1997stochastic}, seek the order between $\boldsymbol{X}$ and $\boldsymbol{Y}$ 
by comparing $Ef\left( \boldsymbol{Y}\right)$ and $Ef\left( \boldsymbol{X}\right)$, where $f\in \boldsymbol{F}$. 
Integral stochastic orders include most common stochastic orders 
like usual stochastic order and convex order. 
\cite{muller2001stochastic} provided a general treatment on integral stochastic orders, with the main tool being an indentity 
for $Ef\left( \boldsymbol{Y} \right) - Ef\left( \boldsymbol{X} \right)$, 
when $\boldsymbol{X}$ and $\boldsymbol{Y}$ are multivariate normal random variables. 
This indentity was derived by using Fourier Inversion Theorem and then employing integration by parts.  
Recently, \cite{yin2019stochastic} extended this to the case of multivariate elliptical distributions, 
while \cite{yin2019stochastic} and \cite{Ansari2020Ordering} derived some integral stochastic orderings for multivariate elliptical distribution by using different methods. 
\par The univariate skew-normal distribution was developed by \cite{azzalini1985class}, 
which was subsequently extended to the multivariate case by \cite{azzalini1996multivariate}. 
This distribution presents a mathematically tractable
extension of the multivariate normal distribution, 
and accommodates skewness in the model. 
The matrix variate extension of the skew-normal distribution was presented in two ways, one by \cite{chen2005matrix}, 
and the other as a natural extension of the multivariate skew-normal distribution of \cite{azzalini1996multivariate}. 
The matrix variate skew-normal distribution used in this paper is based on the latter. 
These two different definitions have been compared by \cite{harrar2008matrix}. 
All these works primarily defined the distribution and derived the means, covariances, moments, moment generating functions 
and some other properties. 
The characteristic functions of univariate and multivariate skew-normal distributions were derived by \cite{kim2011characteristic}. 
Stochastic orderings of the univariate skew-normal distribution and the general skew-symmetric
family of distributions were discussed by \cite{azzalini2012some}. 
The characterizations of likelihood ratio order and usual stochastic order of the univariate skew-symmetric 
distribution were discussed by \cite{hurlimann2013likelihood}. 
Recently, \cite{jamali2020comparison} derived some integral stochastic orderings for multivariate skew-normal distribution, while 
\cite{madadi2020family} studied properties and inferential methods for mean-mixtures of multivariate normal distributions.
\par Our work here follows those of \cite{muller1997stochastic} and \cite{jamali2020comparison}. 
Specifically, we extend the results of \cite{jamali2020comparison} for the multivariate skew-normal distributions to the matrix variate case. 
Necessary and sufficient conditions are derived for some important integral stochastic orders for matrix variate skew-normal distributions. 
The main tool used is an identity for $Ef\left( \boldsymbol{Y} \right)-Ef\left( \boldsymbol{X} \right)$, 
when $\boldsymbol{X}$ and $\boldsymbol{Y}$ are both skew-normally distributed matrices. 
We need the characteristic function of skew-normally distributed matrices for establishing this identity. 
For this purpose, we also extend the result of \cite{kim2011characteristic} on the characteristic function of multivariate skew-normal to the matrix variate case. 
\par The rest of this paper is organized as follows. 
In Section \ref{Preliminaries2}, we review skew-normal distributions in multivariate and matrix variate cases and state some of their key properties. 
We also present a brief review of integral stochastic ordering. 
In Section \ref{chara3}, we derive the characteristic function of matrix variate skew-normal distribution. 
In Section \ref{main-results}, we establish an indentity 
for $Ef\left( \boldsymbol{Y} \right) - Ef\left( \boldsymbol{X} \right)$, 
when $\boldsymbol{X}$ and $\boldsymbol{Y}$ are both matrix variate skew-normally distributed random variables. 
In Section \ref{st5}, the ordering results for matrix variate skew-normal distribution, based on the results of Section \ref{main-results}, 
are presented. Finally, Section \ref{CR6} presents some concluding remarks. 
\section{Preliminaries} \label{Preliminaries2}
The following notations will be used throughout this paper. 
We will use lowercase letters, bold lowercase letters and bold capital letters to denote numbers, vectors and matrices, respectively; 
$\Phi \left( \cdot \right)$ and $\phi \left( \cdot \right)$ to denote the cumulative
distribution function and probability density function of the univariate standard normal distribution, respectively; 
and $\Phi_n \left( \cdot ;\boldsymbol{\mu}, \boldsymbol{\Sigma}\right)$ and $\phi_n \left( \cdot ;\boldsymbol{\mu}, \boldsymbol{\Sigma} \right)$ to denote the cumulative
distribution function and probability density function of the multivariate $n$-dimensional normal distribution with mean vector $\boldsymbol{\mu}$ and covariance matrix $\boldsymbol{\Sigma}$, $N_n\left(\boldsymbol{\mu},\boldsymbol{\Sigma}\right)$.
\par 
For any $\mathbf{B} \in \mathbb{R}^{n \times p}$, 
$\mathbf{B}^T$ and $\mathbf{B}^{-1}$ denote the transpose and inverse of $\mathbf{B}$, respectively. 
For $\mathbf{B}=\left(\mathbf{b}_1, \mathbf{b}_2, \dots, \mathbf{b}_p\right)$, 
we will use ${\rm vec}\left( \mathbf{B} \right)=\left( \mathbf{b}_1^T, \mathbf{b}_2^T, \dots, \mathbf{b}_p^T\right)^T$ to denote the matrix vectorization, 
and ${\rm tr}\left( \mathbf{B} \right)$ and ${\rm etr}\left( \mathbf{B} \right)$ to denote the trace and exponential trace of matrix $\mathbf{B}$, respectively. 
For $\mathbf{A} \in \mathbb{R}^{l \times m}$, $\mathbf{B} \in \mathbb{R}^{m \times n}$, $\mathbf{C} \in \mathbb{R}^{n \times p}$ and $\mathbf{D} \in \mathbb{R}^{p \times q}$, we will use $\mathbf{B} \otimes \mathbf{C}$
to denote the Kronecker product of $\mathbf{B}$ and $\mathbf{C}$. It is known that 
${\rm vec}\left( \mathbf{BCD} \right)=\left( \mathbf{D}^T \otimes \mathbf{B} \right) {\rm vec}\left( \mathbf{C}\right) $, 
$\left( \mathbf{A} \otimes \mathbf{B}\right)^T = \mathbf{A}^T \otimes \mathbf{B}^T$, 
 and $\left( \mathbf{A} \otimes \mathbf{C}\right) \left( \mathbf{B} \otimes \mathbf{D}\right) =\left( \mathbf{AB} \otimes \mathbf{CD} \right)$;  
see \cite{magnus2019matrix}. 
\subsection{Skew-Normal Distributions}
\par We first recall the definition of multivariate skew-normal distribution given by \cite{azzalini1999statistical}. 
 Consider a full rank $n \times n$ covariance matrix $\mathbf{\Omega}=\left( \omega_{ij}\right)$, 
and let $\overline{\mathbf{\Omega}}=\boldsymbol{\omega}^{-1} \mathbf{\Omega} \boldsymbol{\omega}^{-1}$ be the associated correlation matrix, where
$\boldsymbol{\omega}=diag\lbrace \sqrt{\omega_{11}}, \dots, \sqrt{\omega_{nn}}\rbrace$. 
\begin{definition} \label{n-dim-sn-def}
(\cite{azzalini1999statistical}) An $n$-dimensional random vector $\boldsymbol{Z}$ is said to have multivariate skew-normal distribution with 
location parameter $\boldsymbol{\mu}$, scale parameter $\mathbf{\Omega}$ and skewness parameter $\boldsymbol{\alpha}$, denoted by $SN_n\left( \boldsymbol{\mu}, \mathbf{\Omega}, \boldsymbol{\alpha}\right)$, 
 if its probability density function is 
 \begin{equation} \label{dens-func-n-dim}
 f_Z(\boldsymbol{z})=2\phi_n\left( \boldsymbol{z};\boldsymbol{\mu}, \mathbf{\Omega}\right) \Phi\left( \boldsymbol{\alpha}^T \boldsymbol{\omega}^{-1} \left( \boldsymbol{z}-\boldsymbol{\mu}\right)\right), \quad \boldsymbol{z} \in \mathbb{R}^n. 
 \end{equation}
\end{definition}
\par The characteristic function of an $n$-dimensional random vector $\boldsymbol{X}$ is given by 
$\Psi_X\left( \boldsymbol{t} \right) =E\left[ {\rm exp}\left( i\boldsymbol{t}^T\boldsymbol{X}\right)\right] $, 
where $\boldsymbol{t}$ is an $n$-dimensional vector and $i=\sqrt{-1}$ is the imaginary number,  
while the characteristic function of an $n \times p$-dimensional random matrix $\boldsymbol{Y}$ is defined as 
$\Psi_Y\left( \boldsymbol{t} \right) =E\left[ {\rm etr}\left( i\mathbf{T}^T\boldsymbol{Y}\right)\right] $, where $\mathbf{T}$ is an $n \times p$-dimensional matrix. 
\cite{kim2011characteristic} derived the characteristic function of multivariate skew-normal distribution as presented below.
\begin{lemma} \label{chara-n-dim}
(\cite{kim2011characteristic}) For an $n$-dimensional random vector $\boldsymbol{Z} \sim SN_n\left( \boldsymbol{\mu}, \mathbf{\Omega}, \boldsymbol{\alpha}\right)$, the characteristic function of $\boldsymbol{Z}$ is 
\begin{equation} \label{chara-func-n-dim}
\Psi_Z(\boldsymbol{t})=2{\rm exp}\left( i\boldsymbol{\mu}^T \boldsymbol{t}-\frac{1}{2} \boldsymbol{t}^T \mathbf{\Omega} \boldsymbol{t}\right) \Phi\left( i \boldsymbol{\delta}^T \boldsymbol{t}\right) ={\rm exp}\left( i\boldsymbol{\mu}^T \boldsymbol{t}-\frac{1}{2} \boldsymbol{t}^T \mathbf{\Omega} \boldsymbol{t}\right) \left( 1+i\tau\left( \boldsymbol{\delta}^T \boldsymbol{t}\right) \right),
\end{equation}
where
\begin{equation} \label{delta-alpha}
\boldsymbol{\delta}=\left( 1+\boldsymbol{\alpha}^T \overline{\mathbf{\Omega}} \boldsymbol{\alpha} \right)^{-1/2} \boldsymbol{\omega} \overline{\mathbf{\Omega}} \boldsymbol{\alpha}, 
\end{equation} 
\begin{equation}
\tau(u)=\sqrt{\frac{2}{\pi}} \int_0^u {\rm exp}\left( z^2/2\right) dz. 
\end{equation}
\end{lemma}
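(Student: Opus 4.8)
The plan is to avoid a direct $n$-dimensional Fourier integral and instead exploit the additive (convolution) stochastic representation of the skew-normal law, which reduces everything to a single scalar computation. Writing $\boldsymbol{\delta}$ as in \eqref{delta-alpha}, the first step is to record the representation
\begin{equation*}
\boldsymbol{Z} \stackrel{d}{=} \boldsymbol{\mu} + \boldsymbol{\delta}\,|U_0| + \boldsymbol{W},
\end{equation*}
where $U_0 \sim N(0,1)$ is independent of $\boldsymbol{W} \sim N_n(\boldsymbol{0}, \mathbf{\Omega}-\boldsymbol{\delta}\boldsymbol{\delta}^T)$. If I prefer not to quote this, it follows from the selection representation of $SN_n$: starting from a jointly normal pair $(U_0,\boldsymbol{X}_1)$ with $\mathrm{Cov}(\boldsymbol{X}_1,U_0)=\boldsymbol{\omega}\boldsymbol{\delta}^\ast$, the regression decomposition $\boldsymbol{X}_1=\boldsymbol{\delta}^\ast U_0+\boldsymbol{Y}$ and the fact that $(U_0\mid U_0>0)\stackrel{d}{=}|U_0|$ yield the claim after the affine map $\boldsymbol{z}\mapsto\boldsymbol{\mu}+\boldsymbol{\omega}\boldsymbol{z}$, using $\boldsymbol{\omega}\overline{\mathbf{\Omega}}\boldsymbol{\omega}=\mathbf{\Omega}$; one checks $\mathbf{\Omega}-\boldsymbol{\delta}\boldsymbol{\delta}^T\succeq 0$ so that $\boldsymbol{W}$ is well defined.

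Because $U_0$ and $\boldsymbol{W}$ are independent, the characteristic function factors as
\begin{equation*}
\Psi_Z(\boldsymbol{t}) = \exp\left(i\boldsymbol{\mu}^T\boldsymbol{t}\right)\, E\left[\exp\left(i(\boldsymbol{\delta}^T\boldsymbol{t})|U_0|\right)\right]\, \exp\left(-\tfrac12 \boldsymbol{t}^T(\mathbf{\Omega}-\boldsymbol{\delta}\boldsymbol{\delta}^T)\boldsymbol{t}\right).
\end{equation*}
The only nontrivial term is the middle one, which depends on $\boldsymbol{t}$ only through the scalar $a:=\boldsymbol{\delta}^T\boldsymbol{t}$. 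I would first establish the half-normal moment generating function $E[\exp(s|U_0|)]=2\exp(s^2/2)\Phi(s)$ for real $s$, by completing the square and reading the remaining integral as a normal tail probability. Both sides extend to functions of a complex variable $s$: the left-hand side is entire since the Gaussian factor dominates $e^{\mathrm{Re}(s)|U_0|}$ for every $s$, and the right-hand side is entire once $\Phi$ is taken as its entire extension, so by the identity theorem they agree on all of $\mathbb{C}$. Setting $s=ia$ gives $E[\exp(ia|U_0|)]=2\exp(-a^2/2)\Phi(ia)$.

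Substituting back with $a=\boldsymbol{\delta}^T\boldsymbol{t}$ and using $\boldsymbol{t}^T\boldsymbol{\delta}\boldsymbol{\delta}^T\boldsymbol{t}=(\boldsymbol{\delta}^T\boldsymbol{t})^2$, the quadratic exponents combine as $-\tfrac12 a^2-\tfrac12\boldsymbol{t}^T(\mathbf{\Omega}-\boldsymbol{\delta}\boldsymbol{\delta}^T)\boldsymbol{t}=-\tfrac12\boldsymbol{t}^T\mathbf{\Omega}\boldsymbol{t}$, which is exactly the first expression in \eqref{chara-func-n-dim}. For the second expression I would verify $2\Phi(iu)=1+i\tau(u)$ by differentiation: since $\frac{d}{du}\Phi(iu)=i\phi(iu)=i(2\pi)^{-1/2}\exp(u^2/2)$ and $\Phi(0)=\tfrac12$, integrating gives $2\Phi(iu)=1+i\sqrt{2/\pi}\int_0^u\exp(z^2/2)\,dz=1+i\tau(u)$, and taking $u=\boldsymbol{\delta}^T\boldsymbol{t}$ finishes the identity.

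The main obstacle I anticipate is the rigorous passage from real $s$ to $s=ia$, namely assigning meaning to $\Phi$ at an imaginary argument and transferring the MGF identity across the analytic continuation; everything else is bookkeeping with Gaussian integrals and quadratic forms. An alternative that sidesteps the representation is to evaluate $\Psi_Z(\boldsymbol{t})=\int_{\mathbb{R}^n} e^{i\boldsymbol{t}^T\boldsymbol{z}}\, 2\phi_n(\boldsymbol{z};\boldsymbol{\mu},\mathbf{\Omega})\Phi(\boldsymbol{\alpha}^T\boldsymbol{\omega}^{-1}(\boldsymbol{z}-\boldsymbol{\mu}))\,d\boldsymbol{z}$ by completing the square in the Gaussian and invoking the closed form $E[\Phi(\boldsymbol{h}^T\boldsymbol{V}+b)]=\Phi(b/\sqrt{1+\boldsymbol{h}^T\boldsymbol{\Sigma}\boldsymbol{h}})$ for $\boldsymbol{V}\sim N_n(\boldsymbol{0},\boldsymbol{\Sigma})$, extended to the complex mean shift induced by $e^{i\boldsymbol{t}^T\boldsymbol{z}}$; this concentrates the difficulty in the very same continuation step, so I would favor the representation-based route for its transparency.
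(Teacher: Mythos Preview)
The paper does not supply its own proof of this lemma: it is quoted from \cite{kim2011characteristic} and used as a black box, so there is no in-paper argument to compare against. Your proposal is nonetheless a correct, self-contained derivation. The additive representation $\boldsymbol{Z}\stackrel{d}{=}\boldsymbol{\mu}+\boldsymbol{\delta}\,|U_0|+\boldsymbol{W}$ is standard (it appears, e.g., in \cite{azzalini2013skew}), the half-normal identity $E[e^{s|U_0|}]=2e^{s^2/2}\Phi(s)$ is verified exactly as you say by completing the square, and the analytic-continuation step is sound since both sides are entire in $s$. The recombination of the quadratic exponents and the verification $2\Phi(iu)=1+i\tau(u)$ via $\Phi(0)=\tfrac12$ and $\frac{d}{du}\Phi(iu)=i(2\pi)^{-1/2}e^{u^2/2}$ are routine and correct. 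Your alternative route through $E[\Phi(\boldsymbol{h}^T\boldsymbol{V}+b)]$ with a complex shift is in fact closer in spirit to how \cite{kim2011characteristic} proceed, but as you note it lands on the same continuation issue, so the representation-based argument you favor is at least as clean.
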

\begin{remark}
The parameter $\boldsymbol{\delta}$ is determined from the 
skewness parameter $\boldsymbol{\alpha}$ and the scale parameter $\mathbf{\Omega}$. 
The relation between $\boldsymbol{\delta}$ and $\boldsymbol{\alpha}$ can be shown as in (\ref{delta-alpha}) and also
\begin{equation}
\boldsymbol{\alpha} =\left( 1-\boldsymbol{\delta}^T \mathbf{\Omega}^{-1} \boldsymbol{\delta}\right)^{-1/2} \boldsymbol{\omega} \mathbf{\Omega}^{-1} \boldsymbol{\delta} . 
\end{equation}
Thus, the parameter $\boldsymbol{\delta}$ can be treated for a different description of skewness, 
and is sometimes used to denote $SN_n\left( \boldsymbol{\mu}, \mathbf{\Omega}, \boldsymbol{\alpha}\right)$ by $SN_n\left(\boldsymbol{\mu}, \mathbf{\Omega}, \boldsymbol{\alpha},\boldsymbol{\delta}\right)$. 
So, we write $SN_n\left(\boldsymbol{\mu}, \mathbf{\Omega}, \boldsymbol{\alpha},*\right)$ or $SN_n\left(\boldsymbol{\mu}, \mathbf{\Omega}, *,\boldsymbol{\delta}\right)$ if 
$\boldsymbol{\delta}$ or $\boldsymbol{\alpha}$ is not important in the discussion to follow.
For a discussion on the parameter $\boldsymbol{\delta}$, one may refer to \cite{azzalini2013skew}. 
\end{remark}

\par  \cite{azzalini1999statistical} proved that multivariate skew-normal distributions are closed under linear transformations. 
In fact, \cite{shushi2018generalized} recently proved that all generalized skew-elliptical distributions are closed under affine transformations. 
\begin{lemma} \label{closed}
(\cite{azzalini1999statistical}) Suppose $\boldsymbol{Y} \sim SN_n\left( \boldsymbol{\mu}, \mathbf{\Omega}, \boldsymbol{\alpha},\boldsymbol{\delta}\right)$. 
Let $\boldsymbol{X}$ be a linear transformation of $\boldsymbol{Y}$, i.e., $\boldsymbol{X} = \mathbf{A}^T \boldsymbol{Y}$, where $\mathbf{A}$ is an $n \times p$ 
full rank matrix. Then, $\boldsymbol{X} \sim SN_p\left( \mathbf{A}^T\boldsymbol{\mu}, \mathbf{\Omega}_X, \boldsymbol{\alpha}_X,\boldsymbol{\delta}_X\right)$, where 
\begin{equation}
\mathbf{\Omega}_X = \mathbf{A}^T \mathbf{\Omega} \mathbf{A},
\end{equation}
\begin{equation} \label{alpha-cal}
\boldsymbol{\alpha}_X = \frac{\boldsymbol{\omega}_X \mathbf{\Omega}_X^{-1} \mathbf{B}^T \boldsymbol{\alpha}}{\sqrt{1+\boldsymbol{\alpha}^T\left( \mathbf{\Omega} - \mathbf{B} \mathbf{\Omega}_X^{-1}\mathbf{B}^T\right)\boldsymbol{\alpha}}},
\end{equation}
\begin{equation} \label{delta-cal}
\boldsymbol{\delta}_X = \mathbf{A}^T \boldsymbol{\delta}
\end{equation}
and
\begin{equation}
\mathbf{B} = \boldsymbol{\omega}^{-1}\boldsymbol{\Omega} \mathbf{A}.
\end{equation}
\end{lemma}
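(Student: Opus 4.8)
The plan is to argue entirely through characteristic functions, since Lemma~\ref{chara-n-dim} already supplies the characteristic function of a multivariate skew-normal vector. For $\boldsymbol{s}\in\mathbb{R}^p$ I would write $\Psi_X(\boldsymbol{s})=E[\exp(i\boldsymbol{s}^T\mathbf{A}^T\boldsymbol{Y})]=E[\exp(i(\mathbf{A}\boldsymbol{s})^T\boldsymbol{Y})]=\Psi_Y(\mathbf{A}\boldsymbol{s})$ and substitute $\boldsymbol{t}=\mathbf{A}\boldsymbol{s}$ into (\ref{chara-func-n-dim}). Using $\boldsymbol{\mu}^T\mathbf{A}\boldsymbol{s}=(\mathbf{A}^T\boldsymbol{\mu})^T\boldsymbol{s}$, $(\mathbf{A}\boldsymbol{s})^T\mathbf{\Omega}(\mathbf{A}\boldsymbol{s})=\boldsymbol{s}^T(\mathbf{A}^T\mathbf{\Omega}\mathbf{A})\boldsymbol{s}$ and $\boldsymbol{\delta}^T\mathbf{A}\boldsymbol{s}=(\mathbf{A}^T\boldsymbol{\delta})^T\boldsymbol{s}$, the right-hand side collapses to a skew-normal characteristic function with location $\mathbf{A}^T\boldsymbol{\mu}$, scale matrix $\mathbf{\Omega}_X=\mathbf{A}^T\mathbf{\Omega}\mathbf{A}$ and parameter $\boldsymbol{\delta}_X=\mathbf{A}^T\boldsymbol{\delta}$. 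Since $\mathbf{A}$ has full column rank and $\mathbf{\Omega}$ is positive definite, $\mathbf{\Omega}_X$ is again positive definite, so this is a genuine $SN_p$ characteristic function; by the uniqueness theorem for characteristic functions, $\boldsymbol{X}\sim SN_p(\mathbf{A}^T\boldsymbol{\mu},\mathbf{\Omega}_X,*,\boldsymbol{\delta}_X)$, which proves the expressions for $\mathbf{\Omega}_X$ and (\ref{delta-cal}) at one stroke.

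It then remains to re-express $\boldsymbol{\delta}_X$ in the $\boldsymbol{\alpha}$-parametrisation in order to obtain (\ref{alpha-cal}). I would apply the inversion identity recorded in the Remark in dimension $p$, namely $\boldsymbol{\alpha}_X=(1-\boldsymbol{\delta}_X^T\mathbf{\Omega}_X^{-1}\boldsymbol{\delta}_X)^{-1/2}\boldsymbol{\omega}_X\mathbf{\Omega}_X^{-1}\boldsymbol{\delta}_X$. To bring in $\boldsymbol{\alpha}$, I would use (\ref{delta-alpha}) together with $\boldsymbol{\omega}\overline{\mathbf{\Omega}}=\mathbf{\Omega}\boldsymbol{\omega}^{-1}$ (a consequence of $\overline{\mathbf{\Omega}}=\boldsymbol{\omega}^{-1}\mathbf{\Omega}\boldsymbol{\omega}^{-1}$) to write $\boldsymbol{\delta}=c\,\mathbf{\Omega}\boldsymbol{\omega}^{-1}\boldsymbol{\alpha}$ with $c=(1+\boldsymbol{\alpha}^T\overline{\mathbf{\Omega}}\boldsymbol{\alpha})^{-1/2}$, whence $\boldsymbol{\delta}_X=\mathbf{A}^T\boldsymbol{\delta}=c\,\mathbf{A}^T\mathbf{\Omega}\boldsymbol{\omega}^{-1}\boldsymbol{\alpha}=c\,\mathbf{B}^T\boldsymbol{\alpha}$, after recognising $\mathbf{B}^T=\mathbf{A}^T\mathbf{\Omega}\boldsymbol{\omega}^{-1}$. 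Substituting $\boldsymbol{\delta}_X=c\,\mathbf{B}^T\boldsymbol{\alpha}$ makes the numerator $\boldsymbol{\omega}_X\mathbf{\Omega}_X^{-1}\boldsymbol{\delta}_X$ proportional to $\boldsymbol{\omega}_X\mathbf{\Omega}_X^{-1}\mathbf{B}^T\boldsymbol{\alpha}$, which is exactly the numerator of (\ref{alpha-cal}).

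The step I expect to be the main obstacle is the bookkeeping of the two normalising constants, which must combine into the single quadratic form under the square root of (\ref{alpha-cal}). Writing $Q=\boldsymbol{\alpha}^T\mathbf{B}\mathbf{\Omega}_X^{-1}\mathbf{B}^T\boldsymbol{\alpha}$ and $d=\boldsymbol{\alpha}^T\overline{\mathbf{\Omega}}\boldsymbol{\alpha}$, one has $\boldsymbol{\delta}_X^T\mathbf{\Omega}_X^{-1}\boldsymbol{\delta}_X=c^2Q$, so that $1-\boldsymbol{\delta}_X^T\mathbf{\Omega}_X^{-1}\boldsymbol{\delta}_X=(1+d-Q)/(1+d)$; multiplying its inverse square root by the factor $c=(1+d)^{-1/2}$ carried by $\boldsymbol{\delta}_X$ cancels the $(1+d)$ terms and leaves the denominator $\sqrt{1+\boldsymbol{\alpha}^T(\overline{\mathbf{\Omega}}-\mathbf{B}\mathbf{\Omega}_X^{-1}\mathbf{B}^T)\boldsymbol{\alpha}}$ required in (\ref{alpha-cal}). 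To close the argument I would check that the construction is legitimate by noting that $\overline{\mathbf{\Omega}}-\mathbf{B}\mathbf{\Omega}_X^{-1}\mathbf{B}^T=\boldsymbol{\omega}^{-1}\big(\mathbf{\Omega}-\mathbf{\Omega}\mathbf{A}(\mathbf{A}^T\mathbf{\Omega}\mathbf{A})^{-1}\mathbf{A}^T\mathbf{\Omega}\big)\boldsymbol{\omega}^{-1}$ is positive semidefinite, being the conjugate of the residual $\mathbf{I}-\mathbf{\Omega}^{1/2}\mathbf{A}(\mathbf{A}^T\mathbf{\Omega}\mathbf{A})^{-1}\mathbf{A}^T\mathbf{\Omega}^{1/2}$ of an orthogonal projection; this makes the radicand at least $1$ and simultaneously guarantees $\boldsymbol{\delta}_X^T\mathbf{\Omega}_X^{-1}\boldsymbol{\delta}_X<1$, so that the inversion identity is indeed applicable in dimension $p$.
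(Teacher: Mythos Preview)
The paper does not supply its own proof of Lemma~\ref{closed}: the result is simply quoted from \cite{azzalini1999statistical}, and the only supplementary content is the Remark immediately following, which observes that (\ref{delta-cal}) can be recovered from (\ref{alpha-cal}) via the general relation (\ref{delta-alpha}). There is therefore no argument in the paper against which to check your proposal line by line.

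That said, your characteristic-function route is correct and self-contained, and in fact runs in the opposite direction to the paper's Remark: you obtain $\boldsymbol{\delta}_X=\mathbf{A}^T\boldsymbol{\delta}$ directly from $\Psi_X(\boldsymbol{s})=\Psi_Y(\mathbf{A}\boldsymbol{s})$ and then invert to recover $\boldsymbol{\alpha}_X$, whereas the paper (following \cite{azzalini1999statistical}) takes $\boldsymbol{\alpha}_X$ as the primitive quantity and deduces $\boldsymbol{\delta}_X$. Your approach has the advantage that the location, scale, and $\boldsymbol{\delta}$-parameter drop out with essentially no computation; the price is the algebra needed to reconstitute $\boldsymbol{\alpha}_X$, which you handle correctly.

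One small point to tidy up: your final expression for the denominator is $\sqrt{1+\boldsymbol{\alpha}^T(\overline{\mathbf{\Omega}}-\mathbf{B}\mathbf{\Omega}_X^{-1}\mathbf{B}^T)\boldsymbol{\alpha}}$, with the \emph{correlation} matrix $\overline{\mathbf{\Omega}}$, whereas the lemma as printed in the paper has $\mathbf{\Omega}$ in that position. Your algebra is the correct one (indeed $d=\boldsymbol{\alpha}^T\overline{\mathbf{\Omega}}\boldsymbol{\alpha}$ by definition, and this is the form appearing in \cite{azzalini1999statistical}); the discrepancy appears to be a typographical slip in the paper's statement of (\ref{alpha-cal}), not an error on your side.
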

\begin{remark}
Note that (\ref{delta-cal}) can be derived by substituting (\ref{alpha-cal}) into (\ref{delta-alpha}).
\end{remark}
\begin{remark} \label{A-eq-w}
In (\ref{alpha-cal}), if $\mathbf{A} = \boldsymbol{\omega}^{-1}$ or $\mathbf{A} = \boldsymbol{\omega}$, we have $ \boldsymbol{\alpha}_X=\boldsymbol{\alpha}$.
\end{remark}
\par The first-order and second-order moments of multivariate skew-normal distribution have been given by \cite{genton2001moments} as follows. 
\begin{lemma} \label{mean-cov}
(\cite{genton2001moments}) The mean vector and the second-order moment matrix of 
$\boldsymbol{Z} \sim SN_n\left(\boldsymbol{\mu}, \mathbf{\Omega}, *,\boldsymbol{\delta}\right)$ are as follows: 
\begin{equation}
E\left(\boldsymbol{Z}\right)=\boldsymbol{\mu} + \sqrt{\frac{2}{\pi}} \boldsymbol{\delta}, 
\end{equation}
\begin{equation}
E\left(\boldsymbol{Z}\boldsymbol{Z}^T\right)= \mathbf{\Omega} + \boldsymbol{\mu}\boldsymbol{\mu}^T + \sqrt{\frac{2}{\pi}}\left( \boldsymbol{\mu}\boldsymbol{\delta}^T +\boldsymbol{\delta}\boldsymbol{\mu}^T \right). 
\end{equation}
In particular, if $\boldsymbol{\mu}=\boldsymbol{0}$, then we deduce
\begin{equation}
E\left(\boldsymbol{Z}\right)=\sqrt{\frac{2}{\pi}} \boldsymbol{\delta}, 
\end{equation}
\begin{equation}
E\left(\boldsymbol{Z}\boldsymbol{Z}^T\right)= \mathbf{\Omega}. 
\end{equation}
\end{lemma}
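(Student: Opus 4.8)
The plan is to obtain both moments by differentiating the characteristic function of Lemma~\ref{chara-n-dim} at the origin, using the standard relations $E(\boldsymbol{Z}) = \frac{1}{i}\left.\nabla_{\boldsymbol{t}}\Psi_Z(\boldsymbol{t})\right|_{\boldsymbol{t}=\boldsymbol{0}}$ and $E(\boldsymbol{Z}\boldsymbol{Z}^T) = -\left.\nabla_{\boldsymbol{t}}\nabla_{\boldsymbol{t}}^T\Psi_Z(\boldsymbol{t})\right|_{\boldsymbol{t}=\boldsymbol{0}}$, which are valid here because the density in (\ref{dens-func-n-dim}) has Gaussian tails and hence all moments exist. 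I would work with the second (real) form of the characteristic function, $\Psi_Z(\boldsymbol{t})=\exp\!\left(i\boldsymbol{\mu}^T\boldsymbol{t}-\tfrac12\boldsymbol{t}^T\mathbf{\Omega}\boldsymbol{t}\right)\bigl(1+i\tau(\boldsymbol{\delta}^T\boldsymbol{t})\bigr)$, so that no imaginary arguments appear inside $\Phi$ or $\phi$ and every chain-rule step stays within real analysis.

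First I would factor $\Psi_Z = g\,k$, where $g(\boldsymbol{t})=\exp(i\boldsymbol{\mu}^T\boldsymbol{t}-\tfrac12\boldsymbol{t}^T\mathbf{\Omega}\boldsymbol{t})$ is the $N_n(\boldsymbol{\mu},\mathbf{\Omega})$ characteristic function and $k(\boldsymbol{t})=1+i\tau(\boldsymbol{\delta}^T\boldsymbol{t})$. Evaluating at the origin gives $g(\boldsymbol{0})=k(\boldsymbol{0})=1$. The required scalar inputs are $\tau(0)=0$, $\tau'(0)=\sqrt{2/\pi}$ and $\tau''(0)=0$, the last two following from $\tau'(u)=\sqrt{2/\pi}\,\exp(u^2/2)$ and $\tau''(u)=\sqrt{2/\pi}\,u\exp(u^2/2)$. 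Then $\nabla g|_{\boldsymbol{0}}=i\boldsymbol{\mu}$, $\nabla k|_{\boldsymbol{0}}=i\sqrt{2/\pi}\,\boldsymbol{\delta}$, and the product rule yields $\nabla\Psi_Z|_{\boldsymbol{0}}=i(\boldsymbol{\mu}+\sqrt{2/\pi}\,\boldsymbol{\delta})$, which gives the mean formula at once.

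For the second moment I would apply the matrix product rule $\nabla\nabla^T(gk)=k\,\nabla\nabla^T g+\nabla g\,(\nabla k)^T+\nabla k\,(\nabla g)^T+g\,\nabla\nabla^T k$ and evaluate at $\boldsymbol{t}=\boldsymbol{0}$. Here $\nabla\nabla^T g|_{\boldsymbol{0}}=-(\mathbf{\Omega}+\boldsymbol{\mu}\boldsymbol{\mu}^T)$ is the familiar Gaussian Hessian, the two cross terms give $-\sqrt{2/\pi}(\boldsymbol{\mu}\boldsymbol{\delta}^T+\boldsymbol{\delta}\boldsymbol{\mu}^T)$, and crucially $\nabla\nabla^T k|_{\boldsymbol{0}}=i\tau''(0)\boldsymbol{\delta}\boldsymbol{\delta}^T=\boldsymbol{0}$ because $\tau''(0)=0$. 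Negating the sum reproduces $E(\boldsymbol{Z}\boldsymbol{Z}^T)=\mathbf{\Omega}+\boldsymbol{\mu}\boldsymbol{\mu}^T+\sqrt{2/\pi}(\boldsymbol{\mu}\boldsymbol{\delta}^T+\boldsymbol{\delta}\boldsymbol{\mu}^T)$, and setting $\boldsymbol{\mu}=\boldsymbol{0}$ delivers the two stated special cases immediately.

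The computation itself is routine, so the only real care needed is in the matrix-calculus bookkeeping of the gradients and Hessians and in the justification that differentiation of the characteristic function recovers the moments; the one structural point worth flagging is the vanishing of the skewness contribution to the Hessian, which hinges entirely on $\tau''(0)=0$ and is what keeps the scale matrix in $E(\boldsymbol{Z}\boldsymbol{Z}^T)$ equal to $\mathbf{\Omega}$ rather than $\mathbf{\Omega}$ perturbed by a $\boldsymbol{\delta}\boldsymbol{\delta}^T$ term. As an independent check --- and an alternative route that avoids differentiation altogether --- I would note the additive stochastic representation $\boldsymbol{Z}\stackrel{d}{=}\boldsymbol{\mu}+\boldsymbol{\delta}\,|T_0|+\boldsymbol{W}$ with $T_0\sim N(0,1)$ and $\boldsymbol{W}\sim N_n(\boldsymbol{0},\mathbf{\Omega}-\boldsymbol{\delta}\boldsymbol{\delta}^T)$ independent, whence $E|T_0|=\sqrt{2/\pi}$ and $E(T_0^2)=1$ reproduce both formulas after a short calculation.
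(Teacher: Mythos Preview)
Your argument is correct. The paper does not actually supply a proof of Lemma~\ref{mean-cov}; it is quoted verbatim as a known result from \cite{genton2001moments}, so there is no ``paper's own proof'' to compare against. Your characteristic-function differentiation is sound: the key facts $\tau(0)=0$, $\tau'(0)=\sqrt{2/\pi}$ and $\tau''(0)=0$ are exactly what is needed, and your observation that the vanishing of $\tau''(0)$ is what prevents a $\boldsymbol{\delta}\boldsymbol{\delta}^T$ correction in $E(\boldsymbol{Z}\boldsymbol{Z}^T)$ is the right structural insight. The additive stochastic representation you offer as a cross-check is in fact closer in spirit to how \cite{genton2001moments} originally derived these moments, and it has the advantage of making the cancellation $\boldsymbol{\delta}\boldsymbol{\delta}^T + (\mathbf{\Omega}-\boldsymbol{\delta}\boldsymbol{\delta}^T)=\mathbf{\Omega}$ completely transparent without any calculus.
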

The following lemma, called the Fourier Inversion Theorem, can be used to show the relationship between
(\ref{dens-func-n-dim}) and (\ref{chara-func-n-dim}).  
\begin{lemma} \label{Fourier}
Suppose $\boldsymbol{Z}$ is an $n$-dimensional random vector, with density function $f_{Z}\left(z\right)$ and characteristic function $\Psi_{Z}(t)$. 
Then, 
\begin{equation}
f_{Z}\left(\boldsymbol{z}\right) = \left( 2\pi \right)^{-n/2} \int_{\mathbb{R}^n} {\rm exp}\left(-i\boldsymbol{t}^T \boldsymbol{z} \right) \Psi_{Z}(\boldsymbol{t}) d\boldsymbol{t}. 
\end{equation}
If $\boldsymbol{N}$ is an $n \times p$ random matrix, with density function $f_{N}\left(\mathbf{N}\right)$ and characteristic function $\Psi_{N}(T)$, 
then 
\begin{equation}
f_{N}\left(\mathbf{N}\right) = \left( 2\pi \right)^{-np/2} \int_{\mathbb{R}^{n \times p}} {\rm etr}\left( -i\mathbf{T}^T \mathbf{N} \right)\Psi_{N}(\mathbf{T}) d\mathbf{T}. 
\end{equation}
\end{lemma}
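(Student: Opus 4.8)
The plan is to recognize both displays as the classical Fourier inversion theorem and to reduce the matrix statement to the vector statement by vectorization, so that the only genuine analytic work is a single inversion argument on $\mathbb{R}^n$. Throughout I would make explicit the regularity that makes the pointwise identity valid, namely that $\Psi_Z$ is integrable over $\mathbb{R}^n$ (equivalently, that $f_Z$ is bounded and continuous); for the skew-normal laws of this paper this holds automatically because $|\Psi_Z(\boldsymbol{t})|$ is dominated by the Gaussian factor ${\rm exp}(-\tfrac{1}{2}\boldsymbol{t}^T\mathbf{\Omega}\boldsymbol{t})$ appearing in (\ref{chara-func-n-dim}).

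For the vector case I would start from the definition $\Psi_Z(\boldsymbol{t}) = \int_{\mathbb{R}^n}{\rm exp}(i\boldsymbol{t}^T\boldsymbol{y})f_Z(\boldsymbol{y})\,d\boldsymbol{y}$, so that the right-hand side of the claimed identity is an inverse transform applied to a forward transform. One cannot substitute and swap the two integrations directly, since the joint integrand ${\rm exp}(i\boldsymbol{t}^T(\boldsymbol{y}-\boldsymbol{z}))f_Z(\boldsymbol{y})$ fails to be absolutely integrable on $\mathbb{R}^n\times\mathbb{R}^n$. The standard device is a Gaussian convergence factor: for $\epsilon>0$ put
\begin{equation*}
I_\epsilon(\boldsymbol{z}) = \int_{\mathbb{R}^n} {\rm exp}\!\left(-i\boldsymbol{t}^T\boldsymbol{z} - \tfrac{\epsilon}{2}\,\boldsymbol{t}^T\boldsymbol{t}\right)\Psi_Z(\boldsymbol{t})\,d\boldsymbol{t}.
\end{equation*}
The damped integrand is now absolutely integrable, so Fubini's theorem permits the interchange; evaluating the resulting inner Gaussian integral in $\boldsymbol{t}$ rewrites $I_\epsilon(\boldsymbol{z})$ as the convolution of $f_Z$ with the mean-zero normal density $\phi_n(\cdot;\boldsymbol{z},\epsilon\mathbf{I}_n)$, i.e. with an approximate identity.

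The conclusion follows on letting $\epsilon\downarrow 0$. By dominated convergence (using $\Psi_Z\in L^1$), $I_\epsilon(\boldsymbol{z})\to\int_{\mathbb{R}^n}{\rm exp}(-i\boldsymbol{t}^T\boldsymbol{z})\Psi_Z(\boldsymbol{t})\,d\boldsymbol{t}$, while the approximate-identity property gives $(f_Z*\phi_n(\cdot;\boldsymbol{z},\epsilon\mathbf{I}_n))(\boldsymbol{z})\to f_Z(\boldsymbol{z})$ at every continuity point of $f_Z$; equating the two limits and collecting the Gaussian normalizing constants yields the stated prefactor. For the matrix statement I would vectorize: since ${\rm tr}(\mathbf{T}^T\mathbf{N})={\rm vec}(\mathbf{T})^T{\rm vec}(\mathbf{N})$, the matrix characteristic function $\Psi_N(\mathbf{T})=E[{\rm etr}(i\mathbf{T}^T\boldsymbol{N})]$ equals the vector characteristic function of ${\rm vec}(\boldsymbol{N})$ evaluated at ${\rm vec}(\mathbf{T})$, and $d\mathbf{T}$ is Lebesgue measure $d\,{\rm vec}(\mathbf{T})$ on $\mathbb{R}^{np}$; applying the already-proved $np$-dimensional inversion to ${\rm vec}(\boldsymbol{N})$ and translating back into matrix notation gives the second identity.

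The step I expect to be the crux is the interchange of the two integrations: everything else is bookkeeping with Gaussian integrals and the vectorization identities, but the Fubini swap is legitimate only after the Gaussian damping has been inserted, after which one must control the two limits $\epsilon\downarrow 0$ simultaneously, one inside the transform and one inside the approximate identity. The hypothesis $\Psi_Z\in L^1$ is exactly what validates both limits, so I would state it at the outset rather than leave it implicit.
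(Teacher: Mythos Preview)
The paper does not supply a proof of this lemma at all; it is quoted as the classical Fourier Inversion Theorem and used later (in the proof of Theorem~\ref{comp}) as a black box. Your proposal therefore goes well beyond what the paper does: you give the standard and correct argument via a Gaussian convergence factor to justify the Fubini swap, followed by an approximate-identity limit, and you reduce the matrix case to the vector case through the identity ${\rm tr}(\mathbf{T}^T\mathbf{N})={\rm vec}(\mathbf{T})^T{\rm vec}(\mathbf{N})$. All of this is sound.

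One point worth flagging: when you actually ``collect the Gaussian normalizing constants'' you will obtain $(2\pi)^{-n}$, not the $(2\pi)^{-n/2}$ printed in the lemma (and $(2\pi)^{-np}$ rather than $(2\pi)^{-np/2}$ in the matrix display). This is a typo in the lemma statement; the paper itself silently uses the correct constant $\tfrac{1}{(2\pi)^{np}}$ in equation~(\ref{part-dens}). Your write-up should state the correct prefactor rather than claim to recover the one printed.
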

The definition of multivariate skew-normal distribution in Definition \ref{n-dim-sn-def} can be extended to the 
matrix variate case. \cite{harrar2008matrix} refer to this distribution as matrix variate skew-normal distribution of 
Azzalini and Dalla Valle type. 
The following definition is basically from \cite{harrar2008matrix}, but we have added a location parameter $\mathbf{M}$ to it.
\begin{definition} \label{def-mat-skew-normal}
(\cite{harrar2008matrix}) An $n \times p$ random matrix $\boldsymbol{Y}$ is said to have matrix variate skew-normal distribution with 
location matrix $\mathbf{M}$, scale matrix $\mathbf{V} \otimes \boldsymbol{\Sigma}$ and 
skewness matrix $\mathbf{B}$, denoted by $\boldsymbol{Y} \sim SN_{n \times p} \left( \mathbf{M}, \mathbf{V} \otimes \boldsymbol{\Sigma}, \mathbf{B}\right)$, 
if its probability density function is given by
\begin{equation}
f_Y\left( \mathbf{Y}\right) = 2 \phi_{n \times p}\left( \mathbf{Y};\mathbf{M},\mathbf{V} \otimes \boldsymbol{\Sigma}\right)\Phi\left({\rm vec}\left(\mathbf{B}\right)^T \boldsymbol{\omega}^{-1} {\rm vec}\left(\mathbf{Y}-\mathbf{M}\right) \right),
\end{equation}
where 
$ \mathbf{M} \in \mathbb{R}_{n \times p}$, $ \mathbf{B} \in \mathbb{R}_{n \times p}$, $\mathbf{V} \in \mathbb{R}_{p \times p}$, 
$ \boldsymbol{\Sigma} \in \mathbb{R}_{n \times n}$, $\boldsymbol{\omega} = \mathbf{v} \otimes \boldsymbol{\sigma}$, and $\mathbf{v}$ and $\boldsymbol{\sigma}$ are defined by
$\overline{\mathbf{V}} = \mathbf{v}^{-1} \mathbf{V} \mathbf{v}^{-1}$ and 
$\overline{\boldsymbol{\Sigma}} = \boldsymbol{\sigma}^{-1} \boldsymbol{\Sigma} \boldsymbol{\sigma}^{-1}$, with  
$\phi_{n \times p}$ denoting the probability density function of an $n \times p$-dimensional matrix variate normal distribution.
\end{definition} 
The following lemma provides a necessary and sufficient condition for matrix variate skew-normal distribution. 
In some works (see \cite{ye2014distribution}), in fact, matrix variate skew-normal distributions are defined as in the following lemma.
\begin{lemma} \label{lemma-mat-skew-normal}
(\cite{harrar2008matrix}) $\boldsymbol{Y} \sim SN_{n \times p} \left( \mathbf{M}, \mathbf{V} \otimes \boldsymbol{\Sigma}, \mathbf{B}\right)$ 
if and only if 
$\boldsymbol{y} = {\rm vec}\left( \boldsymbol{Y}\right) \sim SN_{np} \left( {\rm vec}\left( \mathbf{M} \right), \mathbf{V} \otimes \boldsymbol{\Sigma},{\rm vec}\left( \mathbf{B}\right),*\right)$.
\end{lemma}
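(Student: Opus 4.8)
The plan is to prove the stated equivalence by exhibiting a single density identity, which then yields both implications simultaneously. Since ${\rm vec}$ is a measure-preserving bijection from $\mathbb{R}^{n \times p}$ onto $\mathbb{R}^{np}$ (it merely relabels coordinates), the density of $\boldsymbol{y} = {\rm vec}\left(\boldsymbol{Y}\right)$ evaluated at $\boldsymbol{y}$ equals the density $f_Y$ of $\boldsymbol{Y}$ evaluated at the corresponding matrix. Hence it suffices to show that the matrix variate skew-normal density of Definition \ref{def-mat-skew-normal}, re-expressed in the variable $\boldsymbol{y} = {\rm vec}\left(\mathbf{Y}\right)$, coincides with the multivariate skew-normal density of Definition \ref{n-dim-sn-def} under the parameter identification $\boldsymbol{\mu} = {\rm vec}\left(\mathbf{M}\right)$, $\mathbf{\Omega} = \mathbf{V} \otimes \boldsymbol{\Sigma}$, and $\boldsymbol{\alpha} = {\rm vec}\left(\mathbf{B}\right)$.

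First I would treat the Gaussian factor. By the standard vectorization representation of the matrix variate normal distribution, an $n \times p$ matrix normal with Kronecker covariance $\mathbf{V} \otimes \boldsymbol{\Sigma}$ is exactly an $np$-dimensional normal for the vectorized matrix, so that $\phi_{n \times p}\left(\mathbf{Y}; \mathbf{M}, \mathbf{V} \otimes \boldsymbol{\Sigma}\right) = \phi_{np}\left({\rm vec}\left(\mathbf{Y}\right); {\rm vec}\left(\mathbf{M}\right), \mathbf{V} \otimes \boldsymbol{\Sigma}\right)$. This is precisely the normal factor appearing in Definition \ref{n-dim-sn-def} under the above identification. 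Next I would verify that the diagonal scaling matrices match. In Definition \ref{n-dim-sn-def} the matrix $\boldsymbol{\omega}$ is the diagonal matrix formed from the square roots of the diagonal entries of the scale matrix; the diagonal entries of $\mathbf{V} \otimes \boldsymbol{\Sigma}$ are the products $V_{kk} \Sigma_{ll}$ in the order dictated by ${\rm vec}$, whose square roots assemble into $\mathbf{v} \otimes \boldsymbol{\sigma}$, where $\mathbf{v}$ and $\boldsymbol{\sigma}$ are the square-root diagonal matrices of $\mathbf{V}$ and $\boldsymbol{\Sigma}$. This is exactly the matrix $\boldsymbol{\omega}$ used in Definition \ref{def-mat-skew-normal}, so the two scalings agree.

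With both ingredients in place, the skewness factor $\Phi\left(\boldsymbol{\alpha}^T \boldsymbol{\omega}^{-1}\left(\boldsymbol{z} - \boldsymbol{\mu}\right)\right)$ of the multivariate density becomes $\Phi\left({\rm vec}\left(\mathbf{B}\right)^T \left(\mathbf{v} \otimes \boldsymbol{\sigma}\right)^{-1} {\rm vec}\left(\mathbf{Y} - \mathbf{M}\right)\right)$, which is exactly the skewness factor of the matrix variate density. Combining the Gaussian and skewness factors shows the two densities are identical, establishing the claimed equivalence in both directions. The step most prone to error is the identification of the scaling matrix: one must confirm that reading off the diagonal of the Kronecker product $\mathbf{V} \otimes \boldsymbol{\Sigma}$ in the coordinate ordering used by ${\rm vec}$ reproduces $\mathbf{v} \otimes \boldsymbol{\sigma}$ consistently; once this bookkeeping is settled, the remainder is a direct substitution requiring no further analysis.
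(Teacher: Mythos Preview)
The paper does not supply a proof of this lemma at all; it simply attributes the result to \cite{harrar2008matrix} and moves on. Your argument is a correct and self-contained justification: you identify the density of the matrix variate skew-normal from Definition~\ref{def-mat-skew-normal} with the multivariate skew-normal density of Definition~\ref{n-dim-sn-def} under the vectorization map, matching the Gaussian factor via the standard equivalence $\phi_{n\times p}(\mathbf{Y};\mathbf{M},\mathbf{V}\otimes\boldsymbol{\Sigma})=\phi_{np}({\rm vec}(\mathbf{Y});{\rm vec}(\mathbf{M}),\mathbf{V}\otimes\boldsymbol{\Sigma})$ and matching the skewing factor by checking that the diagonal scaling $\boldsymbol{\omega}$ built from $\mathbf{V}\otimes\boldsymbol{\Sigma}$ equals $\mathbf{v}\otimes\boldsymbol{\sigma}$. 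That last identification is exactly the content of Lemma~\ref{lemma-1} in the paper (or rather the special case obtained by reading the diagonal), so you could cite it to tighten the bookkeeping you flag as the delicate step. Beyond that, nothing is missing.
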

\par The folllowing lemma provides an identity for correlation matrix, 
and shows specifically that the Kronecker product of two correlation matrices is a correlation matrix.
\begin{lemma} \label{lemma-1}
We assume matrices $\mathbf{V}_{p \times p}=\left( v_{ij}\right)$ and $\boldsymbol{\Sigma}_{n \times n }=\left( \sigma_{ij} \right)$ are both positive definite and their diagonal elements are positive, 
and let matrix 
$\mathbf{\Omega}_{np \times np}=\mathbf{V} \otimes \boldsymbol{\Sigma}=\left( h_{ij}\right)$. Further, let $\mathbf{v}=diag\lbrace \sqrt{v_{11}} \dots \sqrt{v_{pp}}\rbrace$, 
$\boldsymbol{\sigma}=diag\lbrace \sqrt{\sigma_{11}} \dots \sqrt{\sigma_{nn}}\rbrace$ and $\boldsymbol{\omega}=diag\lbrace \sqrt{\omega_{11}} \dots \sqrt{\omega_{np, np}} \rbrace$. 
Then, $\mathbf{\Omega}\boldsymbol{\omega}^{-1}= \left( \mathbf{V}\mathbf{v}^{-1}\right) \otimes \left( \boldsymbol{\Sigma} \boldsymbol{\sigma}^{-1} \right)$ and 
$\boldsymbol{\omega}^{-1}\mathbf{\Omega}\boldsymbol{\omega}^{-1}= \left( \mathbf{v}^{-1}\mathbf{V}\mathbf{v}^{-1}\right) \otimes \left( \boldsymbol{\sigma}^{-1} \boldsymbol{\Sigma} \boldsymbol{\sigma}^{-1} \right)$. 
\end{lemma}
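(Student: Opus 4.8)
The plan is to reduce both identities to a single structural fact, namely that the diagonal scaling matrix factors as a Kronecker product, $\boldsymbol{\omega} = \mathbf{v} \otimes \boldsymbol{\sigma}$, after which the two claimed identities follow at once from the mixed-product property $\left( \mathbf{A} \otimes \mathbf{C}\right)\left( \mathbf{B} \otimes \mathbf{D}\right) = \left( \mathbf{AB} \otimes \mathbf{CD}\right)$ recalled in Section \ref{Preliminaries2}.

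First I would compute the diagonal entries of $\mathbf{\Omega} = \mathbf{V} \otimes \boldsymbol{\Sigma}$. By the definition of the Kronecker product, the $np \times np$ matrix $\mathbf{V} \otimes \boldsymbol{\Sigma}$ is an array of $p \times p$ blocks of size $n \times n$, the $(i,j)$ block being $v_{ij}\boldsymbol{\Sigma}$; hence its diagonal blocks are $v_{ii}\boldsymbol{\Sigma}$, and the diagonal entries of $\mathbf{\Omega}$ are precisely the products $v_{ii}\sigma_{kk}$ for $1 \le i \le p$ and $1 \le k \le n$. Since $\mathbf{V}$ and $\boldsymbol{\Sigma}$ have positive diagonal entries, each such $h_{\ell\ell} = v_{ii}\sigma_{kk} > 0$, so $\sqrt{h_{\ell\ell}} = \sqrt{v_{ii}}\,\sqrt{\sigma_{kk}}$ is well defined.

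Next I would identify $\boldsymbol{\omega} = \mathrm{diag}\{\sqrt{\omega_{11}}, \dots, \sqrt{\omega_{np,np}}\}$ with $\mathbf{v} \otimes \boldsymbol{\sigma}$. Because $\mathbf{v}$ and $\boldsymbol{\sigma}$ are diagonal, $\mathbf{v} \otimes \boldsymbol{\sigma}$ is diagonal too, and its entry in the position corresponding to the pair $(i,k)$ equals $\sqrt{v_{ii}}\,\sqrt{\sigma_{kk}}$, matching the $\sqrt{h_{\ell\ell}}$ computed above under the same block indexing. This yields $\boldsymbol{\omega} = \mathbf{v} \otimes \boldsymbol{\sigma}$, consistent with the convention fixed in Definition \ref{def-mat-skew-normal}; and since the Kronecker product of invertible diagonal matrices is invertible, I obtain $\boldsymbol{\omega}^{-1} = \mathbf{v}^{-1} \otimes \boldsymbol{\sigma}^{-1}$.

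Finally, substituting these factorizations and applying the mixed-product property gives
\[
\mathbf{\Omega}\boldsymbol{\omega}^{-1} = \left( \mathbf{V} \otimes \boldsymbol{\Sigma}\right)\left( \mathbf{v}^{-1} \otimes \boldsymbol{\sigma}^{-1}\right) = \left( \mathbf{V}\mathbf{v}^{-1}\right) \otimes \left( \boldsymbol{\Sigma}\boldsymbol{\sigma}^{-1}\right),
\]
and, inserting $\boldsymbol{\omega}^{-1}$ on both sides and using the property twice,
\[
\boldsymbol{\omega}^{-1}\mathbf{\Omega}\boldsymbol{\omega}^{-1} = \left( \mathbf{v}^{-1} \otimes \boldsymbol{\sigma}^{-1}\right)\left( \mathbf{V} \otimes \boldsymbol{\Sigma}\right)\left( \mathbf{v}^{-1} \otimes \boldsymbol{\sigma}^{-1}\right) = \left( \mathbf{v}^{-1}\mathbf{V}\mathbf{v}^{-1}\right) \otimes \left( \boldsymbol{\sigma}^{-1}\boldsymbol{\Sigma}\boldsymbol{\sigma}^{-1}\right).
\]
The only genuinely delicate point I anticipate is the bookkeeping in the identification $\boldsymbol{\omega} = \mathbf{v} \otimes \boldsymbol{\sigma}$: one must check that the linear ordering of the $np$ diagonal positions defining $\boldsymbol{\omega}$ agrees with the lexicographic block ordering induced by the Kronecker product, so that the entrywise square roots line up correctly. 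Once that alignment is verified, everything else is a direct application of the stated algebraic rules for $\otimes$.
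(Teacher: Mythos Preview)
Your proof is correct and follows essentially the same approach as the paper: both reduce the identities to the factorization $\boldsymbol{\omega}^{-1} = \mathbf{v}^{-1} \otimes \boldsymbol{\sigma}^{-1}$ and then apply the mixed-product property. The paper's proof is terser---it simply writes the two displayed computations, taking $\boldsymbol{\omega} = \mathbf{v} \otimes \boldsymbol{\sigma}$ as given from Definition~\ref{def-mat-skew-normal}---whereas you additionally verify that the definition of $\boldsymbol{\omega}$ via square roots of diagonal entries agrees with this Kronecker factorization, which is a reasonable piece of bookkeeping to include.
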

\begin{proof}
We have 
\begin{equation} \nonumber
\mathbf{\Omega}\boldsymbol{\omega}^{-1}=\left( \mathbf{V} \otimes \boldsymbol{\Sigma}\right) \left( \mathbf{v}^{-1} \otimes \boldsymbol{\sigma}^{-1}\right)=\left( \mathbf{V}\mathbf{v}^{-1}\right) \otimes \left( \boldsymbol{\Sigma} \boldsymbol{\sigma}^{-1} \right),
\end{equation}
\begin{equation} \nonumber
\boldsymbol{\omega}^{-1}\mathbf{\Omega}\boldsymbol{\omega}^{-1}=\left( \mathbf{v}^{-1} \otimes \boldsymbol{\sigma}^{-1}\right) \left( \mathbf{V} \otimes \boldsymbol{\Sigma}\right) \left( \mathbf{v}^{-1} \otimes \boldsymbol{\sigma}^{-1}\right)= \left( \mathbf{v}^{-1}\mathbf{V}\mathbf{v}^{-1}\right) \otimes \left( \boldsymbol{\sigma}^{-1} \boldsymbol{\Sigma} \boldsymbol{\sigma}^{-1} \right). 
\end{equation}
\end{proof}
\begin{remark}
Using Lemma \ref{lemma-1}, it can be shown that $\overline{\mathbf{\Omega}} = \overline{\mathbf{V}} \otimes \overline{\boldsymbol{\Sigma}}$.    
\end{remark}
\subsection{Integral Stochastic Orders}
Integral stochastic orders seek orderings between $\boldsymbol{X}$ and $\boldsymbol{Y}$ 
by comparing $Ef\left( \boldsymbol{Y}\right)$ and $Ef\left( \boldsymbol{X}\right)$. 
\begin{definition} \label{def-F-order}
(\cite{denuit2006actuarial}) Let $\boldsymbol{F}$ be a class of measurable functions $f: \mathbb{R}^{n \times p} \to \mathbb{R}$, and $\boldsymbol{X}$ 
and $\boldsymbol{Y}$ be $n$-dimensional random matrices. 
Then, we say that $\boldsymbol{X} \le_F \boldsymbol{Y}$ if $Ef\left( \boldsymbol{X} \right) \le Ef\left( \boldsymbol{Y} \right)$
holds for all $f \in \boldsymbol{F}$, whenever the expectations are well defined. 
\end{definition}
\cite{jamali2020comparison} and \cite{yin2019stochastic} have discussed $\boldsymbol{F}$-class integral stochastic orders through 
multivariate functions. The definition of $\boldsymbol{F}$-class integral stochastic order is extended here to matrix variate functions as we are interested in comparing random matrices. 
It is easy to verify this extension by means of matrix vectorization. 
\begin{definition}
For a function $f: \mathbb{R}^{n \times p} \to \mathbb{R}$, the difference operator is defined as
\begin{equation}
\Delta_{i, j}^\epsilon f \left( \mathbf{X} \right)=f\left( \mathbf{X} + \epsilon \mathbf{E}_{i, j}\right) - f\left( \mathbf{X}\right), 
\end{equation}
where $\mathbf{E}_{i, j}$ is the $(i, j)$-th unit basis matrix of $\mathbb{R}^{n \times p}$, 
for $i=1, 2, \dots, n$, $j=1, 2, \dots, p$, and $\epsilon > 0$. Then, 
\begin{enumerate}
\item $f$ is supermodular if $\Delta_{k, i}^\epsilon \Delta_{l, j}^\delta f \left( \mathbf{X} \right) \ge 0$ holds for all $\mathbf{X} \in \mathbb{R}^{n \times p}$, $1 \le k < l \le n$, $1 \le i < j \le p$, $\epsilon, \delta > 0$;
\item $f$ is directionally convex if $\Delta_{k, i}^\epsilon \Delta_{l, j}^\delta f \left( \mathbf{X} \right) \ge 0$ holds for all $\mathbf{X} \in \mathbb{R}^{n \times p}$, $k, l=1, 2, \dots, n$, $i, j=1, 2, \dots , p$, $\epsilon, \delta > 0$;
\item $f$ is $\boldsymbol{\Delta}$-monotone if $\Delta_{l_1, j_1}^{\epsilon_1} \Delta_{l_2, j_2}^{\epsilon_2} \dots \Delta_{l_d, j_d}^{\epsilon_d} f \left( \mathbf{X} \right) \ge 0$ holds for all $\mathbf{X} \in \mathbb{R}^{n \times p}$, for any subset \\
$\lbrace \left( l_1, j_1\right), \left( l_2, j_2\right), \dots , \left( l_d, j_d\right)\rbrace $$\subseteq \lbrace\left( k, i\right) |k=1, 2, \dots , n, i=1, 2, \dots , p\rbrace$ and $\epsilon_m>0$, $m=1, \dots , d$. 
\end{enumerate}
\end{definition}
It needs to be pointed out that the italic letters like $\boldsymbol{X},\boldsymbol{Y}$ denote random matrices, 
while roman letters like $\mathbf{X},\mathbf{Y}$ denote constants and nonrandom matrices.
\begin{definition} \label{def-order}
\begin{enumerate}
\item Usual stochastic order: $\boldsymbol{X} \le_{st} \boldsymbol{Y}$ if $\boldsymbol{F}$ is the class of increasing functions;
\item Convex order: $\boldsymbol{X} \le_{cx} \boldsymbol{Y}$ if $\boldsymbol{F}$ is the class of convex functions;
\item Increasing convex order: $\boldsymbol{X} \le_{icx} \boldsymbol{Y}$ if $\boldsymbol{F}$ is the class of increasing convex functions;
\item Upper orthant order: $\boldsymbol{X} \le_{uo} \boldsymbol{Y}$ if $\boldsymbol{F}$ is the class of $\boldsymbol{\Delta}$-monotone functions;
\item Supermodular order: $\boldsymbol{X} \le_{sm} \boldsymbol{Y}$ if $\boldsymbol{F}$ is the class of supermodular functions;
\item Directionally convex order: $\boldsymbol{X} \le_{dcx} \boldsymbol{Y}$ if $\boldsymbol{F}$ is the class of directionally convex functions. 
\end{enumerate}
\end{definition}
\par We consider the functions in the class $\boldsymbol{F}$ which are twice differentiable. 
The gradient vector and the Hessian matrix of a twice differentiable function $f$ are defined by (see \cite{magnus2019matrix})
\begin{equation}
\begin{split}
\nabla f\left( \mathbf{X} \right)=\frac{\partial f \left( \mathbf{X} \right)}{\partial \left( vec \mathbf{X} \right)^T}=\Bigg( & \frac{\partial }{\partial x_{11}}f\left( \mathbf{X} \right), \frac{\partial }{\partial x_{21}}f\left( \mathbf{X} \right), \dots, \frac{\partial }{\partial x_{n1}}f\left( \mathbf{X} \right), \\
&\frac{\partial }{\partial x_{12}}f\left( \mathbf{X} \right), \frac{\partial }{\partial x_{22}}f\left( \mathbf{X} \right), \dots, \frac{\partial }{\partial x_{n2}}f\left( \mathbf{X} \right), \\
& \quad\quad\quad\quad\quad\quad\quad \vdots \\
& \frac{\partial }{\partial x_{1p}}f\left( \mathbf{X} \right), \frac{\partial }{\partial x_{2p}}f\left( \mathbf{X} \right), \dots, \frac{\partial }{\partial x_{np}}f\left( \mathbf{X} \right) \Bigg )
\end{split}
\end{equation}
and
\begin{equation}
H_f\left( \mathbf{X}\right) = \frac{\partial^2 f\left( \mathbf{X}\right) }{\partial vec\mathbf{X}\partial \left( vec\mathbf{X} \right)^T}.
\end{equation}
If we assume $vec\mathbf{X} = \left( x_1, x_2, \dots, x_{np}\right)$, then
\begin{equation}
H_f\left( \mathbf{X} \right)=\left(
\begin{array}{cccc}
\frac{\partial^2 }{\partial x_1 \partial x_1} f\left( \mathbf{X} \right) & \frac{\partial^2 }{\partial x_1 \partial x_2} f\left( \mathbf{X} \right) & \ldots &\frac{\partial^2 }{\partial x_1 \partial x_{np}} f\left( \mathbf{X} \right)\\
\frac{\partial^2 }{\partial x_2 \partial x_1} f\left( \mathbf{X} \right) & \frac{\partial^2 }{\partial x_2 \partial x_2} f\left( \mathbf{X} \right) & \ldots & \frac{\partial^2 }{\partial x_2 \partial x_{np}} f\left( \mathbf{X} \right)\\
\vdots & \vdots & \ddots & \vdots\\
\frac{\partial^2 }{\partial x_{np} \partial x_1} f\left( \mathbf{X} \right) & \frac{\partial^2 }{\partial x_{np} \partial x_2} f\left( \mathbf{X} \right) & \ldots &\frac{\partial^2 }{\partial x_{np} \partial x_{np}} f\left( \mathbf{X} \right)\\
\end{array} \right)_{np \times np}.
\end{equation} 
\begin{theorem}
Let the function $f: \mathbb{R}^{n \times p} \to \mathbb{R}$ be a twice differentiable function. Then, 
\begin{enumerate}
\item $f$ is increasing if and only if $\frac{\partial }{\partial x_{ij}}f\left( \mathbf{X} \right)$ holds for all $\mathbf{X} \in \mathbb{R}^{n \times p}$ and $i=1, 2, \dots , n$, $j=1, 2, \dots , p$;
\item $f$ is convex if and only if $H_f\left( \mathbf{X} \right)$ is positive semi-definite, for all $\mathbf{X} \in \mathbb{R}^{n \times p}$;
\item $f$ is supermodular if and only if $\frac{\partial^2}{\partial X_{kj} \partial X_{li}} f\left( \mathbf{X} \right) \ge 0$ holds for all $\mathbf{X} \in \mathbb{R}^{n \times p}$, $1 \le k, l \le n$, $1 \le i, j \le p$, and $\left( k, j \right) \neq \left( l, i\right)$;
\item $f$ is directionally convex if and only if $\frac{\partial^2}{\partial X_{kj} \partial X_{li}} f\left( \mathbf{X} \right) \ge 0$ holds for all $\mathbf{X} \in \mathbb{R}^{n \times p}$, $1 \le k, l \le n$, $1 \le i, j \le p$. 
\end{enumerate}
\end{theorem}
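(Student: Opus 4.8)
The plan is to reduce each of the four equivalences to the corresponding classical characterization for a twice continuously differentiable function of $np$ real variables, using the matrix vectorization as the bridge. Writing $g\left({\rm vec}\,\mathbf{X}\right)=f\left(\mathbf{X}\right)$, the unit-basis perturbation $\mathbf{E}_{i,j}$ corresponds exactly to incrementing the single coordinate of ${\rm vec}\,\mathbf{X}$ indexed by the entry $(i,j)$, so each difference operator $\Delta_{i,j}^\epsilon$ is a forward finite difference of $g$ in that one coordinate. The whole theorem then rests on a single elementary device, the fundamental theorem of calculus, applied once for the first-order statements and twice for the second-order statements, together with the continuity of the partial derivatives of a $C^2$ function.

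For part (1), I would write the single difference as $\Delta_{i,j}^\epsilon f\left(\mathbf{X}\right)=\int_0^\epsilon \frac{\partial}{\partial x_{ij}} f\left(\mathbf{X}+t\mathbf{E}_{i,j}\right)\,dt$. Nonnegativity of the integrand for every $\mathbf{X}$ immediately gives $\Delta_{i,j}^\epsilon f\ge 0$, while conversely dividing by $\epsilon$ and letting $\epsilon\to 0^+$ recovers $\frac{\partial}{\partial x_{ij}}f\ge 0$ by continuity; this is precisely the statement that $f$ is coordinatewise increasing. Part (2) is the purely multivariate fact that a $C^2$ function on $\mathbb{R}^{np}$ is convex if and only if its Hessian is positive semi-definite everywhere. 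Since $H_f\left(\mathbf{X}\right)$ is by definition the Hessian of $g$ with respect to ${\rm vec}\,\mathbf{X}$, the result transfers verbatim, the forward direction coming from the second-order Taylor expansion with integral remainder and the reverse from restricting $g$ to lines.

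For parts (3) and (4), the key computation is the double-integral representation
\begin{equation} \nonumber
\Delta_{k,i}^\epsilon \Delta_{l,j}^\delta f\left(\mathbf{X}\right)=\int_0^\epsilon \int_0^\delta \frac{\partial^2}{\partial x_{ki}\,\partial x_{lj}} f\left(\mathbf{X}+s\mathbf{E}_{k,i}+t\mathbf{E}_{l,j}\right)\,dt\,ds,
\end{equation}
obtained by applying the fundamental theorem of calculus successively in the two perturbed coordinates. As before, nonnegativity of the mixed second partial forces the mixed difference to be nonnegative, and dividing by $\epsilon\delta$ and sending $\epsilon,\delta\to 0^+$ yields the converse. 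Collecting the indices over which the condition is imposed then gives supermodularity when the two perturbed entries are distinct, and directional convexity when they are allowed to coincide, the coincident case supplying the pure second derivatives, that is, convexity along each coordinate axis.

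The genuinely routine parts are the integral identities themselves; the step that needs the most care is the index bookkeeping in (3) and (4), namely verifying that, as $\left(k,l,i,j\right)$ range over the sets specified in the definitions, the resulting collection of second partials is exactly the one named in the characterization, neither more nor fewer. Here I would invoke the symmetry of mixed partial derivatives (Schwarz's theorem) to identify $\frac{\partial^2}{\partial x_{ki}\,\partial x_{lj}}f$ with $\frac{\partial^2}{\partial x_{lj}\,\partial x_{ki}}f$ and thereby match the ranges in the difference-operator definitions to those in the stated derivative conditions. All four equivalences are otherwise immediate consequences of the corresponding standard results for $C^2$ functions of $np$ real variables.
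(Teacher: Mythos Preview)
Your approach is essentially the same as the paper's: both reduce the matrix-variate statement to the corresponding multivariate characterization by identifying $f(\mathbf{X})$ with a function of ${\rm vec}\,\mathbf{X}\in\mathbb{R}^{np}$. The paper simply cites Arlotto--Scarsini and Denuit--M\"uller for the multivariate results, whereas you supply the underlying integral-representation arguments directly; the content is the same, and your bookkeeping remark about matching the index ranges via Schwarz's theorem is appropriate.
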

\begin{proof}
Matrix function $f\left( \mathbf{X}\right) : \mathbb{R}^{n \times p} \to \mathbb{R} $ can be treated as a multivariate
function $f\left( vec\mathbf{X}\right) : \mathbb{R}^{np} \to \mathbb{R} $. So, the results in the theorem can be 
derived from the results for multivariate functions presented by \cite{arlotto2009hessian} and \cite{denuit2002smooth}.
\end{proof}
\section{Characteristic Function of Matrix Skew-normal Distribution} \label{chara3}
The following theorem presents the characteristic function of matrix variate skew-normal distribution.
\begin{theorem} \label{chara}
Let $\boldsymbol{Z} \sim SN_{n \times p} \left(\mathbf{M} , \mathbf{V} \otimes \boldsymbol{\Sigma}, \mathbf{B}\right)$. Then, the
 characteristic function of $\boldsymbol{Z}$ is 
\begin{equation} \label{chara-func}
\Psi_Z(\mathbf{T})=2{\rm etr}\left( i\mathbf{M}^T \mathbf{T}-\frac{1}{2} \mathbf{T}^T \boldsymbol{\Sigma T V}\right) \Phi\left( i\frac{{\rm tr}\left( \mathbf{V} \mathbf{v}^{-1} \mathbf{B}^T \boldsymbol{\sigma}^{-1} \boldsymbol{\Sigma} \mathbf{T} \right)}{\sqrt{1+\mathbf{B}^T \overline{\boldsymbol{\Sigma}} \mathbf{B} \overline{\mathbf{V}}}}\right) ,
\end{equation}
where $\mathbf{v}=diag\lbrace \sqrt{v_{11}} ,\dots, \sqrt{v_{pp}}\rbrace$, 
$\boldsymbol{\sigma}=diag\lbrace \sqrt{\sigma_{11}} ,\dots, \sqrt{\sigma_{nn}}\rbrace$,  
and $i=\sqrt{-1}$. 
\end{theorem}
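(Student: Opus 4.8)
The plan is to reduce the matrix-variate problem to the multivariate one already settled in Lemma \ref{chara-n-dim}, via the vectorization equivalence of Lemma \ref{lemma-mat-skew-normal}. First I would observe that the matrix characteristic function coincides with the vector one under vectorization: since ${\rm tr}\left(\mathbf{T}^T \boldsymbol{Z}\right) = {\rm vec}\left(\mathbf{T}\right)^T {\rm vec}\left(\boldsymbol{Z}\right)$, setting $\boldsymbol{t} = {\rm vec}\left(\mathbf{T}\right)$ yields $\Psi_Z(\mathbf{T}) = E\left[{\rm etr}\left(i\mathbf{T}^T \boldsymbol{Z}\right)\right] = E\left[{\rm exp}\left(i\boldsymbol{t}^T {\rm vec}\left(\boldsymbol{Z}\right)\right)\right] = \Psi_{{\rm vec}(Z)}(\boldsymbol{t})$. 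By Lemma \ref{lemma-mat-skew-normal}, ${\rm vec}\left(\boldsymbol{Z}\right) \sim SN_{np}\left({\rm vec}\left(\mathbf{M}\right), \mathbf{V}\otimes\boldsymbol{\Sigma}, {\rm vec}\left(\mathbf{B}\right), *\right)$, so I can invoke Lemma \ref{chara-n-dim} with $\boldsymbol{\mu} = {\rm vec}\left(\mathbf{M}\right)$, $\mathbf{\Omega} = \mathbf{V}\otimes\boldsymbol{\Sigma}$, and $\boldsymbol{\alpha} = {\rm vec}\left(\mathbf{B}\right)$. It then remains only to rewrite the three factors of (\ref{chara-func-n-dim}) in matrix notation.

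The Gaussian factor is routine. Using ${\rm vec}\left(\mathbf{M}\right)^T {\rm vec}\left(\mathbf{T}\right) = {\rm tr}\left(\mathbf{M}^T\mathbf{T}\right)$ and the identity ${\rm vec}\left(\mathbf{BCD}\right) = \left(\mathbf{D}^T\otimes\mathbf{B}\right){\rm vec}\left(\mathbf{C}\right)$ with $\mathbf{V}$ symmetric, I obtain $\left(\mathbf{V}\otimes\boldsymbol{\Sigma}\right){\rm vec}\left(\mathbf{T}\right) = {\rm vec}\left(\boldsymbol{\Sigma}\mathbf{T}\mathbf{V}\right)$, whence $\boldsymbol{t}^T\left(\mathbf{V}\otimes\boldsymbol{\Sigma}\right)\boldsymbol{t} = {\rm tr}\left(\mathbf{T}^T\boldsymbol{\Sigma}\mathbf{T}\mathbf{V}\right)$. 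This converts the exponent $i\boldsymbol{\mu}^T\boldsymbol{t} - \frac{1}{2}\boldsymbol{t}^T\mathbf{\Omega}\boldsymbol{t}$ into $i\,{\rm tr}\left(\mathbf{M}^T\mathbf{T}\right) - \frac{1}{2}{\rm tr}\left(\mathbf{T}^T\boldsymbol{\Sigma}\mathbf{T}\mathbf{V}\right)$, i.e.\ the claimed ${\rm etr}$ term.

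The substantive work lies in rewriting the skewness argument $\boldsymbol{\delta}^T\boldsymbol{t}$, where $\boldsymbol{\delta} = \left(1+\boldsymbol{\alpha}^T\overline{\mathbf{\Omega}}\boldsymbol{\alpha}\right)^{-1/2}\boldsymbol{\omega}\overline{\mathbf{\Omega}}\boldsymbol{\alpha}$ as in (\ref{delta-alpha}). Here I would use the remark following Lemma \ref{lemma-1}, that $\overline{\mathbf{\Omega}} = \overline{\mathbf{V}}\otimes\overline{\boldsymbol{\Sigma}}$, together with $\boldsymbol{\omega} = \mathbf{v}\otimes\boldsymbol{\sigma}$. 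The Kronecker mixed-product rule then gives $\boldsymbol{\omega}\overline{\mathbf{\Omega}} = \left(\mathbf{V}\mathbf{v}^{-1}\right)\otimes\left(\boldsymbol{\Sigma}\boldsymbol{\sigma}^{-1}\right)$, so that $\boldsymbol{\omega}\overline{\mathbf{\Omega}}\boldsymbol{\alpha} = \left(\left(\mathbf{V}\mathbf{v}^{-1}\right)\otimes\left(\boldsymbol{\Sigma}\boldsymbol{\sigma}^{-1}\right)\right){\rm vec}\left(\mathbf{B}\right) = {\rm vec}\left(\boldsymbol{\Sigma}\boldsymbol{\sigma}^{-1}\mathbf{B}\mathbf{v}^{-1}\mathbf{V}\right)$ by the same vec identity. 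Taking the inner product with $\boldsymbol{t} = {\rm vec}\left(\mathbf{T}\right)$ and using ${\rm vec}\left(\mathbf{A}\right)^T{\rm vec}\left(\mathbf{T}\right) = {\rm tr}\left(\mathbf{A}^T\mathbf{T}\right)$ produces the numerator ${\rm tr}\left(\mathbf{V}\mathbf{v}^{-1}\mathbf{B}^T\boldsymbol{\sigma}^{-1}\boldsymbol{\Sigma}\mathbf{T}\right)$. For the normalizing constant, the same factorization of $\overline{\mathbf{\Omega}}$ gives $\boldsymbol{\alpha}^T\overline{\mathbf{\Omega}}\boldsymbol{\alpha} = {\rm vec}\left(\mathbf{B}\right)^T\left(\overline{\mathbf{V}}\otimes\overline{\boldsymbol{\Sigma}}\right){\rm vec}\left(\mathbf{B}\right) = {\rm tr}\left(\mathbf{B}^T\overline{\boldsymbol{\Sigma}}\mathbf{B}\overline{\mathbf{V}}\right)$, yielding the factor $\sqrt{1+{\rm tr}\left(\mathbf{B}^T\overline{\boldsymbol{\Sigma}}\mathbf{B}\overline{\mathbf{V}}\right)}$ under the square root. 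Substituting these expressions into (\ref{chara-func-n-dim}) gives (\ref{chara-func}).

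The step I expect to be the main obstacle is the bookkeeping of the order of factors and transposes in $\boldsymbol{\omega}\overline{\mathbf{\Omega}}\boldsymbol{\alpha}$: one must correctly match $\left(\mathbf{V}\mathbf{v}^{-1}\right)\otimes\left(\boldsymbol{\Sigma}\boldsymbol{\sigma}^{-1}\right)$ against the template $\left(\mathbf{D}^T\otimes\mathbf{B}\right){\rm vec}\left(\mathbf{C}\right)$, and exploit the symmetry of $\mathbf{V}$ and $\boldsymbol{\Sigma}$ and the diagonality of $\mathbf{v}$ and $\boldsymbol{\sigma}$, so that the transposes collapse to give exactly ${\rm tr}\left(\mathbf{V}\mathbf{v}^{-1}\mathbf{B}^T\boldsymbol{\sigma}^{-1}\boldsymbol{\Sigma}\mathbf{T}\right)$ rather than a transposed mirror image of it. Everything else is a direct translation between the vector and matrix formulations.
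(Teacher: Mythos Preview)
Your proposal is correct and follows essentially the same route as the paper: vectorize via Lemma~\ref{lemma-mat-skew-normal}, apply the multivariate characteristic function of Lemma~\ref{chara-n-dim}, and then translate each factor back into matrix notation using the Kronecker-product and ${\rm vec}/{\rm tr}$ identities (with Lemma~\ref{lemma-1} for $\overline{\mathbf{\Omega}}=\overline{\mathbf{V}}\otimes\overline{\boldsymbol{\Sigma}}$). Your derivation of the denominator as $\sqrt{1+{\rm tr}\left(\mathbf{B}^T\overline{\boldsymbol{\Sigma}}\mathbf{B}\overline{\mathbf{V}}\right)}$ matches the paper's proof; the missing ${\rm tr}$ in the displayed statement~(\ref{chara-func}) is a typographical slip there, not a discrepancy in your argument.
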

\begin{proof}
Let $\boldsymbol{z}=vec(\boldsymbol{Z})$ and $\mathbf{\Omega} = \mathbf{V} \otimes \boldsymbol{\Sigma}$. We can then show that $\boldsymbol{z} \sim SN_{np}\left( {\rm vec}\left( \mathbf{M} \right), \mathbf{\Omega},{\rm vec}\left( \mathbf{B} \right)\right)$ from Lemma \ref{lemma-mat-skew-normal}. 
Then, the characteristic function of $\boldsymbol{z}$ is 
\begin{equation} 
\begin{split}
\Psi_{\boldsymbol{z}}\left( \boldsymbol{t} \right)&=2{\rm exp}\left( i {\rm vec}\left( \mathbf{M} \right)^T \boldsymbol{t}-\frac{1}{2} \boldsymbol{t}^T \left( \mathbf{V} \otimes \boldsymbol{\Sigma}\right) \boldsymbol{t} \right) \Phi\left( i\left( \frac{ \left( \mathbf{V} \otimes \boldsymbol{\Sigma}\right) \left( \mathbf{v}^{-1} \otimes \boldsymbol{\sigma}^{-1}\right) {\rm vec}\left(B\right)}{\sqrt{1+{\rm vec}\left(\mathbf{B}\right)^T \left( \overline{\mathbf{V}} \otimes \overline{\boldsymbol{\Sigma}}\right) {\rm vec}\left(\mathbf{B}\right)}} \right)^T\boldsymbol{t}\right)\\
&=2{\rm exp}\left( i {\rm vec}\left( \mathbf{M} \right)^T \boldsymbol{t}-\frac{1}{2} \boldsymbol{t}^T \left( \mathbf{V} \otimes \boldsymbol{\Sigma}\right) \boldsymbol{t} \right) \Phi\left( i \frac{{\rm vec}\left( \boldsymbol{\Sigma} \boldsymbol{\sigma}^{-1} \mathbf{B} \mathbf{v}^{-1} \mathbf{V}\right)^T}{\sqrt{1+{\rm tr}\left(\mathbf{B}^T \overline{\boldsymbol{\Sigma}} \mathbf{B} \overline{\mathbf{V}}\right)}} \boldsymbol{t}\right).
\end{split}
\end{equation}
With $t={\rm vec}\left( \mathbf{T} \right)$, we then have 
\begin{equation} \label{vec-chara}
\begin{split}
\Psi_{\boldsymbol{z}}\left( {\rm vec}\left( \mathbf{T} \right) \right)&=2{\rm etr}\left( i\mathbf{M}^T T-\frac{1}{2} \mathbf{T}^T \boldsymbol{\Sigma T V}\right) \Phi\left( i \frac{{\rm vec}\left( \boldsymbol{\Sigma} \boldsymbol{\sigma}^{-1} \mathbf{B} \mathbf{v}^{-1} \mathbf{V}\right)^T {\rm vec}\left( \mathbf{T}\right)}{\sqrt{1+{\rm tr}\left(\mathbf{B}^T \overline{\boldsymbol{\Sigma}} \mathbf{B} \overline{\mathbf{V}}\right)}}  \right) \\
&=2{\rm etr}\left( i\mathbf{M}^T T-\frac{1}{2} \mathbf{T}^T \boldsymbol{\Sigma T V}\right) \Phi\left( i\frac{{\rm tr}\left( \mathbf{V} \mathbf{v}^{-1} \mathbf{B}^T \boldsymbol{\sigma}^{-1} \boldsymbol{\Sigma} \mathbf{T} \right)}{\sqrt{1+{\rm tr}\left(\mathbf{B}^T \overline{\boldsymbol{\Sigma}} \mathbf{B} \overline{\mathbf{V}}\right)}}\right).
\end{split}
\end{equation}
Due to the fact that 
\begin{equation}
E\left( {\rm etr}\left( i\boldsymbol{Z}^T \mathbf{T}\right)\right) = E\left( {\rm exp}\left( i{\rm vec}\left(\boldsymbol{Z}\right)^T {\rm vec}\left(\mathbf{T}\right)\right)\right),
\end{equation}
we have
\begin{equation}
\Psi_Z(\mathbf{T})=2{\rm etr}\left( i\mathbf{M}^T T-\frac{1}{2} \mathbf{T}^T \boldsymbol{\Sigma T V}\right) \Phi\left( i\frac{{\rm tr}\left( \mathbf{V} \mathbf{v}^{-1} \mathbf{B}^T \boldsymbol{\sigma}^{-1} \boldsymbol{\Sigma} \mathbf{T} \right)}{\sqrt{1+{\rm tr}\left(\mathbf{B}^T \overline{\boldsymbol{\Sigma}} \mathbf{B} \overline{\mathbf{V}}\right)}}\right) 
\end{equation}
as required.
\end{proof}
\begin{remark}
If we set $p=1$ in Theorem \ref{chara}, the characteristic function of 
$\boldsymbol{X} \sim SN_{n \times 1} \left( \mathbf{M} , 1 \otimes \boldsymbol{\Sigma}, \mathbf{B}\right)$ simplifies to 
\begin{equation}
\Psi_X(T)=2{\rm exp}\left( i\mathbf{M}^T T-\frac{1}{2} \mathbf{T}^T \boldsymbol{\Sigma T V}\right) \Phi\left( i\frac{\mathbf{B}^T \boldsymbol{\sigma}^{-1} \boldsymbol{\Sigma} \mathbf{T}}{\sqrt{1+\mathbf{B}^T \overline{\boldsymbol{\Sigma}} \mathbf{B}}}\right).
\end{equation}
If we regard $\boldsymbol{X}$, $\mathbf{M}$ and $\mathbf{B}$ as $n$-dimensional vectors, and regard $ SN_{n \times 1} \left( \mathbf{M} , 1 \otimes \boldsymbol{\Sigma}, \mathbf{B}\right)$ as a multivariate skew-normal distribution, then its characteristic
function is identical to the result in Lemma \ref{chara-n-dim}. 
\end{remark}
\begin{remark}
If we set the skew matrix $\mathbf{B}=\boldsymbol{0}_{n \times p}$, then the matrix variate skew-normal distribution $SN_{n \times p} \left( \mathbf{M}, \mathbf{V}\otimes \boldsymbol{\Sigma}, \mathbf{B}\right)$
will degenerate to a matrix variate normal distribution $N_{n \times p} \left( \mathbf{M}, \mathbf{V}\otimes \boldsymbol{\Sigma}\right)$, with characteristic function ${\rm etr}\left( i\mathbf{M}^T\mathbf{T}-\frac{1}{2} \mathbf{T}^T \boldsymbol{\Sigma T V}\right)$, 
which is indeed the characteristic function of a matrix variate normal distribution. 
\end{remark}
\begin{remark}
If we set
\begin{equation}
\boldsymbol{\delta}=\frac{\left( \mathbf{V} \otimes \boldsymbol{\Sigma}\right) \left( \mathbf{v}^{-1} \otimes \boldsymbol{\sigma}^{-1}\right) {\rm vec}\left(\mathbf{B}\right)}{\sqrt{1+{\rm tr}\left(\mathbf{B}^T \overline{\boldsymbol{\Sigma}} \mathbf{B} \overline{\mathbf{V}}\right)}}=\frac{{\rm vec}\left( \boldsymbol{\Sigma} \boldsymbol{\sigma}^{-1} \mathbf{B} \mathbf{v}^{-1} \mathbf{V}\right) }{\sqrt{1+{\rm tr}\left(\mathbf{B}^T \overline{\boldsymbol{\Sigma}} \mathbf{B} \overline{\mathbf{V}}\right)}}, 
\end{equation}
(\ref{chara-func}) can be expressed as
\begin{equation} \label{sim-chara}
\Psi_Z\left( T \right)=2{\rm exp}\left( i {\rm vec}\left( \mathbf{M} \right)^T {\rm vec}\left( \mathbf{T} \right)-\frac{1}{2} {\rm vec}\left( \mathbf{T} \right)^T \left( \mathbf{V} \otimes \boldsymbol{\Sigma}\right) {\rm vec}\left( \mathbf{T} \right) \right) \Phi\left( i\boldsymbol{\delta}^T {\rm vec}\left( \mathbf{T} \right)\right). 
\end{equation}
Like in the multivariate case, in the ensuing discussion, we write 
$SN_{n\times p} \left( \mathbf{M} , \mathbf{V} \otimes \boldsymbol{\Sigma}, \mathbf{B},\boldsymbol{\delta} \right)$ 
for the matrix variate skew-normal distribution. 
Sometimes, we write $SN_{n\times p} \left( \mathbf{M} , \mathbf{V} \otimes \boldsymbol{\Sigma}, \mathbf{B},* \right)$ 
or 
$SN_{n\times p} \left( \mathbf{M} , \mathbf{V} \otimes \boldsymbol{\Sigma}, *,\boldsymbol{\delta} \right)$ 
if parameters $ \boldsymbol{\delta}$ or $ \mathbf{B}$ are not important in the discussion.
\end{remark}
\begin{remark} \label{share-delta}
Let $\boldsymbol{Z} \sim SN_{n \times p} \left(\mathbf{M} , \mathbf{V} \otimes \boldsymbol{\Sigma}, \mathbf{B},\boldsymbol{\delta}\right)$. Then, we can deduce from Lemma \ref{lemma-mat-skew-normal} that 
${\rm vec} \left(\boldsymbol{Z}\right) \sim SN_{np} ( {\rm vec}\left(\mathbf{M}\right) , \mathbf{V} \otimes \boldsymbol{\Sigma}, {\rm vec}\left(\mathbf{B}\right),\boldsymbol{\delta} )$ and that they share the same parameter $\boldsymbol{\delta}$.
\end{remark}
\section{Main results} \label{main-results}
The next theorem presents an indentity for $ E f\left( \boldsymbol{Y}\right) -E f\left( \boldsymbol{X}\right)$, 
and it extends an identity of \cite{muller2001stochastic} for the case of multivariate normal distribution. 

\begin{theorem} \label{comp}
Suppose the $n \times p$ skew-normal random matrices $\boldsymbol{X}$ and $\boldsymbol{Y}$ are distributed as
\begin{equation} \label{supposexy}
\boldsymbol{X} \sim SN_{n \times p} \left( \mathbf{M}, \mathbf{\Omega}, \mathbf{B},\boldsymbol{\delta} \right), \boldsymbol{Y} \sim SN_{n \times p} \left( \mathbf{M}', \mathbf{\Omega}', \mathbf{B}',\boldsymbol{\delta}' \right), 
\end{equation}
where $\mathbf{M},\mathbf{M}',\mathbf{B},\mathbf{B}' \in \mathbb{R}^{n \times p}$, $\mathbf{\Omega}=\mathbf{V}_{p \times p} \otimes \boldsymbol{\Sigma}_{n \times n}$, $\mathbf{\Omega}'=\mathbf{V}'_{p \times p} \otimes \boldsymbol{\Sigma}'_{n \times n}$, $\mathbf{V}$, $\mathbf{V}'$, $\boldsymbol{\Sigma}$ and $\boldsymbol{\Sigma}'$ are positive definite, 
and $\boldsymbol{\delta}, \boldsymbol{\delta}' \in \mathbb{R}^{np}$. 
Suppose an $n \times p$-dimensional matrix variate skew-normal random variable $\boldsymbol{Z}_\lambda$ is distributed as
\begin{equation}
\boldsymbol{Z}_\lambda \sim SN_{n \times p} \left( \mathbf{M}_\lambda, \mathbf{\Omega}_\lambda, *,\boldsymbol{\delta}_\lambda \right),
\end{equation}
where 
\begin{equation} \nonumber
\mathbf{M}_\lambda=\lambda\mathbf{M}'+\left( 1-\lambda \right) \mathbf{M}, 
\end{equation}
\begin{equation} \nonumber
\mathbf{\Omega}_\lambda=\lambda\mathbf{\Omega}'+\left( 1-\lambda \right) \mathbf{\Omega}, 
\end{equation}
\begin{equation} \nonumber
\boldsymbol{\delta}_\lambda=\lambda\boldsymbol{\delta}'+\left( 1-\lambda \right) \boldsymbol{\delta}. 
\end{equation}
Let the density functions of random matrices $\boldsymbol{X}$, $\boldsymbol{Y}$ and $\boldsymbol{Z}_\lambda$ 
be $\phi_0$, $\phi_1$ and $\phi_\lambda$, respectively, 
and the corresponding characteristic functions be $\Psi_0$, $\Psi_1$ and $\Psi_\lambda$. 
Let $\overline{\phi_\lambda}$ and $\overline{\Psi_\lambda}$ be the density function and characteristic function of 
matrix variate normal distribution with mean matrix $\mathbf{M}_\lambda$ and covariance 
$\mathbf{\Omega}_\lambda - \boldsymbol{\delta}_\lambda \boldsymbol{\delta}_\lambda^T$. 
Suppose the function $f: \mathbb{R}^{n \times p} \to \mathbb{R}$ is twice differentiable and is such that  
\begin{enumerate}
\item $\lim_{x_{li} \to \pm \infty} f\left( \mathbf{X} \right) \phi_\lambda\left( \mathbf{X} \right) = 0$, $\lim_{x_{li} \to \pm \infty} f\left( \mathbf{X} \right) \overline{\phi_\lambda}\left( \mathbf{X} \right) = 0$, $\forall\mathbf{X}\in\mathbb{R}^{n \times p}$, $0 \le \lambda \le 1$, $l=1, 2, \dots, n$ and $i=1, 2, \dots, p$,
\item $\lim_{x_{li} \to \pm \infty} f\left( \mathbf{X} \right) \frac{\partial}{\partial x_{kj}}\phi_\lambda\left( \mathbf{X} \right) = 0$, $\lim_{x_{li} \to \pm \infty} f\left( \mathbf{X} \right) \frac{\partial}{\partial x_{kj}} \overline{\phi_\lambda}\left( \mathbf{X} \right) = 0$, $\forall\mathbf{X}\in\mathbb{R}^{n \times p}$, $0 \le \lambda \le 1$, $k, l=1, 2, \dots, n$ and $i, j=1, 2, \dots, p$,
\item $\lim_{x_{li} \to \pm \infty} \phi_\lambda\left( \mathbf{X} \right) \frac{\partial}{\partial x_{kj}} f\left( \mathbf{X} \right) = 0$, $\lim_{x_{li} \to \pm \infty} \overline{\phi_\lambda}\left( \mathbf{X} \right) \frac{\partial}{\partial x_{kj}} f\left( \mathbf{X} \right) = 0$, $\forall\mathbf{X}\in\mathbb{R}^{n \times p}$, $0 \le \lambda \le 1$, $k, l=1, 2, \dots, n$ and $i, j=1, 2, \dots, p$. 
\end{enumerate}
Then, 
\begin{equation} \label{result1}
\begin{split}
E\left[ f\left( \boldsymbol{Y}\right) - f\left( \boldsymbol{X}\right)\right]=& \int_0^1 \int_{\mathbb{R}^{n \times p}} \left( \left( \nabla f\left( \mathbf{X}\right) {\rm vec}\left( \mathbf{M}' - \mathbf{M}\right)\right) + \frac{1}{2}{\rm tr}\left( \left( \mathbf{\Omega}'-\mathbf{\Omega}\right) H_f\left( \mathbf{X}\right)\right)\right)\phi_\lambda\left(\mathbf{X}\right)\\
&+\frac{2}{\sqrt{2 \pi}}\left( \nabla f\left( \mathbf{X}\right)  \left( \boldsymbol{\delta}' - \boldsymbol{\delta}\right) \right) \overline{\phi_\lambda}\left( \mathbf{X}\right) d\mathbf{X} d\lambda. 
\end{split}
\end{equation}
\end{theorem}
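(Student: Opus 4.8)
The plan is to interpolate between $\boldsymbol{X}$ and $\boldsymbol{Y}$ along the family $\boldsymbol{Z}_\lambda$ and to write the difference as the integral of a $\lambda$-derivative. First I would note that, by (\ref{sim-chara}), the law of a matrix variate skew-normal depends only on the triple $\left(\mathbf{M},\mathbf{\Omega},\boldsymbol{\delta}\right)$; since at $\lambda=0$ and $\lambda=1$ these parameters reduce to those of $\boldsymbol{X}$ and $\boldsymbol{Y}$, we have $\boldsymbol{Z}_0\stackrel{d}{=}\boldsymbol{X}$ and $\boldsymbol{Z}_1\stackrel{d}{=}\boldsymbol{Y}$, so that
\[
E\left[f\left(\boldsymbol{Y}\right)-f\left(\boldsymbol{X}\right)\right]=\int_0^1\frac{\partial}{\partial\lambda}\left(\int_{\mathbb{R}^{n\times p}}f\left(\mathbf{X}\right)\phi_\lambda\left(\mathbf{X}\right)\,d\mathbf{X}\right)d\lambda=\int_0^1\int_{\mathbb{R}^{n\times p}}f\left(\mathbf{X}\right)\frac{\partial}{\partial\lambda}\phi_\lambda\left(\mathbf{X}\right)\,d\mathbf{X}\,d\lambda,
\]
the interchange of differentiation and integration being justified by the regularity built into the hypotheses. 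Everything then reduces to computing $\frac{\partial}{\partial\lambda}\phi_\lambda$ and re-expressing it through integration by parts.

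To obtain $\frac{\partial}{\partial\lambda}\phi_\lambda$ I would pass to the characteristic function via the Fourier Inversion Theorem (Lemma \ref{Fourier}). Writing $\boldsymbol{t}={\rm vec}\left(\mathbf{T}\right)$ and $\boldsymbol{m}_\lambda={\rm vec}\left(\mathbf{M}_\lambda\right)$ and using the vectorized form of $\Psi_\lambda$ in (\ref{sim-chara}), differentiation in $\lambda$ is straightforward because $\mathbf{M}_\lambda$, $\mathbf{\Omega}_\lambda$ and $\boldsymbol{\delta}_\lambda$ are affine in $\lambda$ with constant derivatives $\mathbf{M}'-\mathbf{M}$, $\mathbf{\Omega}'-\mathbf{\Omega}$ and $\boldsymbol{\delta}'-\boldsymbol{\delta}$. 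This splits $\frac{\partial}{\partial\lambda}\Psi_\lambda$ into a Gaussian piece
\[
\left(i\,{\rm vec}\left(\mathbf{M}'-\mathbf{M}\right)^T\boldsymbol{t}-\frac{1}{2}\boldsymbol{t}^T\left(\mathbf{\Omega}'-\mathbf{\Omega}\right)\boldsymbol{t}\right)\Psi_\lambda\left(\boldsymbol{t}\right)
\]
and a skew piece arising from differentiating $\Phi\left(i\boldsymbol{\delta}_\lambda^T\boldsymbol{t}\right)$.

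The crucial step, and the one I expect to be the main obstacle, is recognizing that the skew piece collapses onto the auxiliary Gaussian $\overline{\phi_\lambda}$. Differentiating $\Phi\left(i\boldsymbol{\delta}_\lambda^T\boldsymbol{t}\right)$ produces the factor $\phi\left(i\boldsymbol{\delta}_\lambda^T\boldsymbol{t}\right)=\frac{1}{\sqrt{2\pi}}\exp\left(\frac{1}{2}\boldsymbol{t}^T\boldsymbol{\delta}_\lambda\boldsymbol{\delta}_\lambda^T\boldsymbol{t}\right)$, and one checks the identity
\[
2\exp\left(i\boldsymbol{m}_\lambda^T\boldsymbol{t}-\frac{1}{2}\boldsymbol{t}^T\mathbf{\Omega}_\lambda\boldsymbol{t}\right)\phi\left(i\boldsymbol{\delta}_\lambda^T\boldsymbol{t}\right)=\frac{2}{\sqrt{2\pi}}\,\overline{\Psi_\lambda}\left(\boldsymbol{t}\right),
\]
the characteristic function of the normal law with covariance $\mathbf{\Omega}_\lambda-\boldsymbol{\delta}_\lambda\boldsymbol{\delta}_\lambda^T$. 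Hence the skew piece equals $\frac{2}{\sqrt{2\pi}}\,i\left(\boldsymbol{\delta}'-\boldsymbol{\delta}\right)^T\boldsymbol{t}\,\overline{\Psi_\lambda}\left(\boldsymbol{t}\right)$. This reduction is the matrix variate analogue of the mechanism behind Lemma \ref{chara-n-dim}, and it is what lets the skew-normal computation close in the same shape as the normal one.

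It then remains to invert the Fourier transform. Using the standard correspondence that multiplication by $it_k$ under inversion becomes $-\partial/\partial x_k$ (so that $\left(it_j\right)\left(it_k\right)$ becomes $\partial^2/\partial x_j\partial x_k$), the Gaussian piece contributes
\[
-\sum_k\left({\rm vec}\left(\mathbf{M}'-\mathbf{M}\right)\right)_k\frac{\partial\phi_\lambda}{\partial x_k}+\frac{1}{2}\sum_{j,k}\left(\mathbf{\Omega}'-\mathbf{\Omega}\right)_{jk}\frac{\partial^2\phi_\lambda}{\partial x_j\partial x_k}
\]
to $\frac{\partial}{\partial\lambda}\phi_\lambda$, while the skew piece contributes $-\frac{2}{\sqrt{2\pi}}\sum_k\left(\boldsymbol{\delta}'-\boldsymbol{\delta}\right)_k\frac{\partial\overline{\phi_\lambda}}{\partial x_k}$. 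Substituting into the $\lambda$-integral and integrating by parts, once against $\phi_\lambda$ and $\overline{\phi_\lambda}$ for the first-order terms and twice for the second-order term, transfers all derivatives onto $f$; the boundary terms vanish precisely by hypotheses (1)--(3). Re-assembling the result in the vectorized notation $\nabla f$ and ${\rm tr}\left(\left(\mathbf{\Omega}'-\mathbf{\Omega}\right)H_f\right)$ yields exactly (\ref{result1}).
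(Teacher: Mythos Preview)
Your proposal is correct and follows essentially the same route as the paper's proof: interpolate via $\boldsymbol{Z}_\lambda$, differentiate $\Psi_\lambda$ using (\ref{sim-chara}) to split into a Gaussian piece carrying $\Psi_\lambda$ and a skew piece carrying $\overline{\Psi_\lambda}$, invert via Lemma \ref{Fourier} to obtain (\ref{part-dens-2}), and then integrate by parts using the boundary hypotheses. Your explicit identification of $2\exp\left(i\boldsymbol{m}_\lambda^T\boldsymbol{t}-\tfrac{1}{2}\boldsymbol{t}^T\mathbf{\Omega}_\lambda\boldsymbol{t}\right)\phi\left(i\boldsymbol{\delta}_\lambda^T\boldsymbol{t}\right)$ with $\tfrac{2}{\sqrt{2\pi}}\,\overline{\Psi_\lambda}$ is exactly the key computation the paper uses in passing from (\ref{part-chara}) to (\ref{part-dens-2}).
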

\begin{proof}
Let $g\left( \lambda \right)=\int_{\mathbb{R}^{n \times p}} f\left( \mathbf{X}\right) \phi_\lambda\left( \mathbf{X}\right) d\mathbf{X}$. 
Then, we have $g\left( 0\right) =Ef\left( \boldsymbol{X}\right)$, $g\left( 1\right) =Ef\left( \boldsymbol{Y}\right)$ and 
\begin{equation} \label{E-int}
Ef\left( \boldsymbol{Y}\right) - Ef\left( \boldsymbol{X}\right) = \int_0^1 \frac{\partial}{\partial \lambda} g\left( \lambda \right)d\lambda=\int_0^1 \int_{\mathbb{R}^{n \times p}} f\left( \mathbf{X}\right) \frac{\partial}{\partial \lambda} \phi_\lambda\left( \mathbf{X}\right) d\mathbf{X} d\lambda. 
\end{equation}
According to the Fourier inversion formula in Lemma \ref{Fourier}, we have
\begin{equation} \label{part-dens}
\frac{\partial}{\partial \lambda} \phi_\lambda\left( \mathbf{X}\right) = \frac{1}{\left( 2\pi\right)^{np}} \int_{\mathbb{R}^{n \times p}} {\rm etr}\left( -i\mathbf{T}^T\mathbf{X}\right) \frac{\partial}{\partial \lambda} \Psi_\lambda\left(\mathbf{T}\right)d\mathbf{T}.
\end{equation}
Using Theorem \ref{chara}, we have
\begin{equation} \label{part-chara}
\begin{split}
\frac{\partial}{\partial \lambda} \Psi_\lambda\left(\mathbf{T}\right)&=\frac{\partial}{\partial \lambda} 2{\rm exp}\left( i{\rm vec}\left( \mathbf{M}_\lambda\right)^T {\rm vec}\left( \mathbf{T} \right)-\frac{1}{2} {\rm vec}\left( \mathbf{T} \right)^T \mathbf{\Omega}_\lambda {\rm vec}\left( \mathbf{T} \right) \right) \Phi\left( i\boldsymbol{\delta}_\lambda^T {\rm vec}\left( \mathbf{T}\right)\right)\\
&=\lbrace i {\rm vec}\left( \mathbf{T}\right)^T {\rm vec}\left( \mathbf{M}'-\mathbf{M} \right) - \frac{1}{2}{\rm vec}\left(\mathbf{T}\right)^T \left( \mathbf{\Omega}' - \mathbf{\Omega}\right) {\rm vec}\left(\mathbf{T}\right)\rbrace \Psi_\lambda\left( \mathbf{T} \right) \\
&+\frac{2}{\sqrt{2 \pi}} \left( i\left( \boldsymbol{\delta}'-\boldsymbol{\delta}\right)^T {\rm vec}\left( \mathbf{T}\right) \right) \overline{\Psi_\lambda}\left( \mathbf{T} \right).
\end{split}
\end{equation}
Upon substituting (\ref{part-chara}) into (\ref{part-dens}), we get 
\begin{equation} \label{part-dens-2}
\begin{split}
\frac{\partial}{\partial \lambda} \phi_\lambda\left( \mathbf{X}\right) =&\frac{1}{2} \sum_{k, l=1}^{n} \sum_{i, j=1}^{p} \left( v_{ij}'\sigma_{kl}'-v_{ij}\sigma_{kl}\right) \frac{\partial^2}{\partial x_{ki} \partial x_{lj}} \phi_\lambda\left( \mathbf{X}\right)\\
&-\sum_{k=1}^{n} \sum_{j=1}^{p} \left( m_{kj}'-m_{kj}\right) \frac{\partial}{\partial x_{kj}} \phi_\lambda\left( \mathbf{X}\right) \\
&-\frac{2}{\sqrt{2\pi}} \sum_{k=1}^{n} \sum_{j=1}^{p} \left( \delta_{k+n\left( j-1\right)}'-\delta_{k+n\left( j-1\right)}\right) \frac{\partial}{\partial x_{kj}} \overline{\phi_\lambda}\left( \mathbf{X}\right). 
\end{split}
\end{equation}
Substituting (\ref{part-dens-2}) back into (\ref{E-int}), we obtain
\begin{equation}
\begin{split}
\int_{\mathbb{R}^{n \times p}} f\left( \mathbf{X}\right) \frac{\partial}{\partial \lambda} \phi_\lambda\left( \mathbf{X}\right) d\mathbf{X}=&\frac{1}{2} \sum_{k, l=1}^{n} \sum_{i, j=1}^{p} \left( v_{ij}'\sigma_{kl}'-v_{ij}\sigma_{kl}\right) \int_{\mathbb{R}^{n \times p}} f\left( \mathbf{X}\right) \frac{\partial^2}{\partial x_{ki} \partial x_{lj}} \phi_\lambda\left( \mathbf{X}\right) d\mathbf{X} \\
&+\sum_{k=1}^{n} \sum_{j=1}^{p} \left( m_{kj}'-m_{kj}\right) \int_{\mathbb{R}^{n \times p}} f\left( \mathbf{X}\right) \frac{\partial}{\partial x_{kj}} \phi_\lambda\left( \mathbf{X}\right) d\mathbf{X} \\
&+\frac{2}{\sqrt{2\pi}} \sum_{k=1}^{n} \sum_{j=1}^{p} \left( \delta_{k+n\left( j-1\right)}'-\delta_{k+n\left( j-1\right)}\right) \int_{\mathbb{R}^{n \times p}} f\left( \mathbf{X}\right) \frac{\partial}{\partial x_{kj}} \overline{\phi_\lambda}\left( \mathbf{X}\right) d\mathbf{X}.
\end{split}
\end{equation}
Now, integrating by parts and then using the properties of $f$, we get
\begin{equation} \label{fina}
\begin{split}
\int_{\mathbb{R}^{n \times p}} f\left( \mathbf{X}\right) \frac{\partial}{\partial \lambda} \phi_\lambda\left( \mathbf{X}\right) d\mathbf{X}=&\frac{1}{2} \sum_{k, l=1}^{n} \sum_{i, j=1}^{p} \left( v_{ij}'\sigma_{kl}'-v_{ij}\sigma_{kl}\right) \int_{\mathbb{R}^{n \times p}} \phi_\lambda\left( \mathbf{X}\right) \frac{\partial^2}{\partial x_{ki} \partial x_{lj}} f\left( \mathbf{X}\right) d\mathbf{X} \\
&+\sum_{k=1}^{n} \sum_{j=1}^{p} \left( m_{kj}'-m_{kj}\right) \int_{\mathbb{R}^{n \times p}} \phi_\lambda\left( \mathbf{X}\right) \frac{\partial}{\partial x_{kj}} f\left( \mathbf{X}\right) d\mathbf{X} \\
&+\frac{2}{\sqrt{2\pi}} \sum_{k=1}^{n} \sum_{j=1}^{p} \left( \delta_{k+n\left( j-1\right)}'-\delta_{k+n\left( j-1\right)}\right) \int_{\mathbb{R}^{n \times p}} \overline{\phi_\lambda}\left( \mathbf{X}\right) \frac{\partial}{\partial x_{kj}} f\left( \mathbf{X}\right) d\mathbf{X}\\
=&\int_{\mathbb{R}^{n \times p}} \left( \left( \nabla f\left( \mathbf{X}\right) {\rm vec}\left( \mathbf{M}' - \mathbf{M}\right)\right) + \frac{1}{2}{\rm tr}\left( \left( \mathbf{\Omega}'-\mathbf{\Omega}\right) H_f\left( \mathbf{X}\right)\right)\right)\phi_\lambda\left(\mathbf{X}\right)\\
&+\frac{2}{\sqrt{2 \pi}}\left( \nabla f\left( \mathbf{X}\right) \left( \boldsymbol{\delta}' - \boldsymbol{\delta}\right) \right) \overline{\phi_\lambda}\left( \mathbf{X}\right) d\mathbf{X}.
\end{split}
\end{equation}
Substitution of (\ref{fina}) into (\ref{E-int}) yields the result in (\ref{result1}). 
\end{proof}
\begin{theorem} \label{coll}
Suppose the $n \times p$ skew-normal random matrices $\boldsymbol{X}$ and $\boldsymbol{Y}$ 
and the function $f$ satisfy the conditions in Theorem \ref{comp}. If
\begin{enumerate}
\item $\sum_{k, l=1}^{n} \sum_{i, j=1}^{p} \left( v_{ij}'\sigma_{kl}'-v_{ij}\sigma_{kl}\right) \frac{\partial^2}{\partial x_{ki} \partial x_{lj}} f\left( \mathbf{X}\right) \ge 0$,
\item $\sum_{k=1}^{n} \sum_{j=1}^{p} \left( m_{kj}'-m_{kj}\right) \frac{\partial}{\partial x_{kj}} f\left( \mathbf{X}\right) \ge 0$,
\item $\sum_{k=1}^{n} \sum_{j=1}^{p} \left( \delta_{k+n\left( j-1\right)}'-\delta_{k+n\left( j-1\right)}\right) \frac{\partial}{\partial x_{kj}} f\left( \mathbf{X}\right) \ge 0$, 
\end{enumerate}
then $Ef\left( \boldsymbol{Y}\right) \ge Ef\left( \boldsymbol{X}\right)$. 
\end{theorem}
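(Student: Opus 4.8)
The plan is to read the conclusion directly off the integral identity \eqref{result1} established in Theorem \ref{comp}, by recognizing that the three hypotheses are exactly the statements that each piece of the integrand is pointwise nonnegative. First I would recall that the integrand in \eqref{result1} splits into two groups: a bracket $\bigl(\nabla f(\mathbf{X})\,{\rm vec}(\mathbf{M}'-\mathbf{M})\bigr)+\tfrac12{\rm tr}\bigl((\mathbf{\Omega}'-\mathbf{\Omega})H_f(\mathbf{X})\bigr)$ weighted by the skew-normal density $\phi_\lambda$, and a term $\tfrac{2}{\sqrt{2\pi}}\bigl(\nabla f(\mathbf{X})(\boldsymbol{\delta}'-\boldsymbol{\delta})\bigr)$ weighted by the matrix-normal density $\overline{\phi_\lambda}$. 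The whole task is then to match these three matrix-form expressions with the three index-sum hypotheses.

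Next I would rewrite each expression in coordinates, a computation already performed inside the proof of Theorem \ref{comp} at \eqref{part-dens-2}--\eqref{fina}. Since $\nabla f(\mathbf{X})=\partial f/\partial({\rm vec}\,\mathbf{X})^T$ and ${\rm vec}(\mathbf{M}'-\mathbf{M})$ stacks the entries $m_{kj}'-m_{kj}$, the first term equals $\sum_{k=1}^n\sum_{j=1}^p(m_{kj}'-m_{kj})\frac{\partial}{\partial x_{kj}}f(\mathbf{X})$, which is precisely the left-hand side of hypothesis~(2). Using $\mathbf{\Omega}=\mathbf{V}\otimes\boldsymbol{\Sigma}$ and the entrywise expansion of the Kronecker product, the trace term equals $\tfrac12\sum_{k,l=1}^n\sum_{i,j=1}^p(v_{ij}'\sigma_{kl}'-v_{ij}\sigma_{kl})\frac{\partial^2}{\partial x_{ki}\partial x_{lj}}f(\mathbf{X})$, i.e.\ one-half the left-hand side of hypothesis~(1). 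Finally, since $\boldsymbol{\delta}'-\boldsymbol{\delta}$ stacks the entries $\delta_{k+n(j-1)}'-\delta_{k+n(j-1)}$, the last term equals $\tfrac{2}{\sqrt{2\pi}}\sum_{k=1}^n\sum_{j=1}^p(\delta_{k+n(j-1)}'-\delta_{k+n(j-1)})\frac{\partial}{\partial x_{kj}}f(\mathbf{X})$, which is $\tfrac{2}{\sqrt{2\pi}}$ times the left-hand side of hypothesis~(3).

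With these identifications, hypotheses~(1)--(3) guarantee that both the bracketed coefficient of $\phi_\lambda$ and the coefficient of $\overline{\phi_\lambda}$ are nonnegative for every $\mathbf{X}$ and every $\lambda\in[0,1]$. Because $\phi_\lambda$ is a skew-normal density and $\overline{\phi_\lambda}$ is a matrix-normal density, both weights are nonnegative, so the entire integrand of \eqref{result1} is nonnegative pointwise; integrating over $\mathbb{R}^{n\times p}\times[0,1]$ then yields $E\bigl[f(\boldsymbol{Y})-f(\boldsymbol{X})\bigr]\ge 0$, which is the claim. I do not expect a genuine obstacle, since the argument is essentially bookkeeping layered on Theorem \ref{comp}; the only points needing care are the correct index-matching of the Kronecker entries $v_{ij}'\sigma_{kl}'$ against the mixed second derivatives $\partial^2/\partial x_{ki}\partial x_{lj}$, and confirming that $\overline{\phi_\lambda}$ really is a density, so that using $\overline{\phi_\lambda}\ge 0$ as a weight is legitimate. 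Both are supplied by the setup already in force, so the result follows at once.
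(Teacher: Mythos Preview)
Your proposal is correct and is precisely the argument the paper intends: the theorem is stated as an immediate corollary of Theorem~\ref{comp}, with no separate proof given, because the three hypotheses are exactly the nonnegativity of the three summands in the integrand of~\eqref{result1} (equivalently~\eqref{fina}), and the densities $\phi_\lambda,\overline{\phi_\lambda}$ are nonnegative weights. Your only additions---the explicit index-matching and the remark that $\overline{\phi_\lambda}$ is a genuine density---are just making explicit what the paper leaves unstated.
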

\section{Stochastic Orderings} \label{st5}
\cite{jamali2020comparison} gave a necessary and sufficient condition for 
comparing univariate skew-normal distributions, which is as follows. 
\begin{lemma} \label{comp-dim-1}
(\cite{jamali2020comparison}) Let $X_1 \sim SN_1\left( \mu_1, \sigma_1^2, *,\delta_1\right)$ and 
$X_2 \sim SN_1\left( \mu_2, \sigma_2^2, *,\delta_2\right) $. Then, $X_1 \le_{st} X_2$ if and only if 
$\mu_1 \le \mu_2$, $\sigma_1 = \sigma_2$ and $\delta_1 \le \delta_2$. 
\end{lemma}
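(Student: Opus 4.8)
The plan is to establish the two directions separately, relying on the identity machinery of Section~\ref{main-results} for the sufficiency and on the moment generating function for the necessity. I will use the standard reformulation $X_1\le_{st}X_2\iff \overline{F}_1(t)\le\overline{F}_2(t)$ for all $t$, equivalently $Eh(X_1)\le Eh(X_2)$ for every increasing $h$; moreover it suffices to test the latter on bounded smooth increasing functions, since these approximate the indicators $\mathbf{1}_{(t,\infty)}$, and for such $h$ the boundary hypotheses of Theorem~\ref{comp} hold automatically because a univariate skew-normal density together with its derivatives decays faster than any polynomial.

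\emph{Sufficiency.} I would specialize Theorem~\ref{coll} to $n=p=1$. In dimension one the scale collapses to the scalar $\mathbf{\Omega}=\sigma^2$, so the three hypotheses of Theorem~\ref{coll} reduce to
\begin{equation} \nonumber
(\sigma_2^2-\sigma_1^2)\,f''(x)\ge0,\qquad (\mu_2-\mu_1)\,f'(x)\ge0,\qquad (\delta_2-\delta_1)\,f'(x)\ge0
\end{equation}
for all $x\in\mathbb{R}$. Under the hypotheses $\mu_1\le\mu_2$, $\sigma_1=\sigma_2$ and $\delta_1\le\delta_2$, the first expression vanishes identically and the remaining two are nonnegative for every increasing $f$ (for which $f'\ge0$). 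Hence Theorem~\ref{coll} yields $Ef(X_1)\le Ef(X_2)$ for all admissible increasing $f$, and the approximation remark above upgrades this to $X_1\le_{st}X_2$.

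\emph{Necessity.} Assume $X_1\le_{st}X_2$. Setting $n=p=1$ in Theorem~\ref{chara} (equivalently Lemma~\ref{chara-n-dim}) and evaluating at $t=-is$, so that $it$ becomes the real scalar $s$, turns the characteristic function into the moment generating function $M_i(s)=2\exp\!\left(\mu_i s+\tfrac12\sigma_i^2 s^2\right)\Phi(\delta_i s)$. Because $x\mapsto e^{sx}$ is increasing for $s>0$ and $x\mapsto -e^{sx}$ is increasing for $s<0$, the order forces $M_1(s)\le M_2(s)$ for $s>0$ and $M_1(s)\ge M_2(s)$ for $s<0$. Writing
\begin{equation} \nonumber
R(s)=\log M_2(s)-\log M_1(s)=(\mu_2-\mu_1)s+\tfrac12(\sigma_2^2-\sigma_1^2)s^2+\log\Phi(\delta_2 s)-\log\Phi(\delta_1 s),
\end{equation}
we have $R(0)=0$, $R(s)\ge0$ for $s>0$ and $R(s)\le0$ for $s<0$, so $R'(0)\ge0$. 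Since $\tfrac{d}{ds}\log\Phi(\delta s)\big|_{s=0}=\delta\sqrt{2/\pi}$, this gives $R'(0)=(\mu_2-\mu_1)+\sqrt{2/\pi}\,(\delta_2-\delta_1)=E(X_2)-E(X_1)\ge0$, the ordering of the means. Granting the scale identity $\sigma_1=\sigma_2$, the remaining conditions follow from the two-sided growth of $R$: letting $s\to+\infty$, where the logarithmic terms stay bounded, forces $\mu_1\le\mu_2$, while letting $s\to-\infty$, where $\log\Phi(\delta_i s)\sim-\tfrac12(\delta_i s)^2$, forces $\delta_1\le\delta_2$.

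The step I expect to be the main obstacle is the necessity of the exact identity $\sigma_1=\sigma_2$. The usual stochastic order is essentially a location-type order and is in general insensitive to dispersion, so this equality can be read off neither from the mean inequality nor from the leading tail rates alone: the quadratic growth of $R$ as $s\to\pm\infty$ mixes $\sigma_2^2-\sigma_1^2$ with the contributions $-\tfrac12\delta_i^2$ that appear in $\log\Phi(\delta_i s)$ whenever the relevant $\delta_i$ is negative, so a case analysis on the signs of $\delta_1,\delta_2$ seems unavoidable. Establishing $\sigma_1=\sigma_2$ therefore calls for a careful global comparison of the two survival functions $\overline{F}_1$ and $\overline{F}_2$ over the whole real line rather than at the tails only; showing that a strict inequality $\sigma_1\ne\sigma_2$ is incompatible with $\overline{F}_1\le\overline{F}_2$ holding for every $t$ is the delicate technical heart of the argument, and it is here that the lightest-touch tools (mean comparison and the moment generating function) are demonstrably not enough.
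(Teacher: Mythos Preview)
The paper does not prove Lemma~\ref{comp-dim-1}; it is stated as a citation from \cite{jamali2020comparison} and used as a black box in the necessity parts of Theorems~\ref{theorem-storder} and following. So there is no ``paper's own proof'' to compare against, and your proposal is an attempt to supply an argument the paper deliberately omits.

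On the substance: your sufficiency direction via the one-dimensional specialization of Theorem~\ref{coll} is fine. The necessity direction, however, has a genuine gap that you yourself flag. You never establish $\sigma_1=\sigma_2$; you \emph{grant} it and then derive the other two inequalities conditionally on it. Moreover, even those conditional derivations are not airtight: the claim that the terms $\log\Phi(\delta_i s)$ ``stay bounded'' as $s\to+\infty$ is false whenever some $\delta_i<0$ (then $\Phi(\delta_i s)\to0$ and the logarithm diverges to $-\infty$), and the asymptotic $\log\Phi(\delta_i s)\sim-\tfrac12(\delta_i s)^2$ as $s\to-\infty$ holds only when $\delta_i>0$. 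So both limits mix the $\sigma$- and $\delta$-contributions in a sign-dependent way, exactly the entanglement you identify in your last paragraph, and your two-sided MGF growth argument does not separate them without a full case analysis on $\mathrm{sgn}(\delta_1),\mathrm{sgn}(\delta_2)$.

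A cleaner route for the necessity is to work with tails rather than moment generating functions. For any univariate $SN_1(\mu,\sigma^2,*,\delta)$ one has, for the tail that matches the sign of $\alpha$ (equivalently of $\delta$), $\overline{F}(t)\sim 2\,\overline{\Phi}\big((t-\mu)/\sigma\big)$, while the opposite tail decays strictly faster than the Gaussian with scale $\sigma$. Comparing $\log\overline{F}_1(t)$ and $\log\overline{F}_2(t)$ as $t\to+\infty$ and $\log F_1(t)$, $\log F_2(t)$ as $t\to-\infty$ then pins down $\sigma_1=\sigma_2$ first (from the quadratic rate $-t^2/(2\sigma_i^2)$ on at least one side), after which the location and skewness inequalities drop out from the next-order terms. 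This is essentially the argument carried out in \cite{jamali2020comparison}; your MGF approach can be made to work, but only after the same sign-by-sign case split, so it offers no real economy.
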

In the following discussion, we say $\boldsymbol{a} > \boldsymbol{b}$, where $\boldsymbol{a}$ and $\boldsymbol{b}$ are 
vectors, if and only if $a_i > b_i$ for all $i$. We say $\mathbf{A} > \mathbf{B}$, where $\mathbf{A}$ and $\mathbf{B}$ are 
matrices, if and only if $a_{ij} > b_{ij}$ for all $i$, $j$. We suppose random matrices $\boldsymbol{X} $ and $ \boldsymbol{Y}$ 
satisfy (\ref{supposexy}) and random matrices $\boldsymbol{X}_0 $ and $ \boldsymbol{Y}_0$ are such that
$\boldsymbol{X}_0 \sim SN_{n \times p} \left( \boldsymbol{0}_{n \times p}, \overline{\mathbf{\Omega}}, *,\boldsymbol{\delta}\right)$ and  
$\boldsymbol{Y}_0 \sim SN_{n \times p} \left( \boldsymbol{0}_{n \times p}, \overline{\mathbf{\Omega}'}, *,\boldsymbol{\delta}'\right)$. 
\par The following theorem, which is an extension of Lemma \ref{comp-dim-1}, provides a necessary and sufficient condition for 
comparing skew-normal matrices under stochastic order. 
\begin{theorem} \label{theorem-storder}
$\boldsymbol{X} \le_{st} \boldsymbol{Y}$ if and only if $\mathbf{M} \le \mathbf{M}'$, 
$\boldsymbol{\delta} \le \boldsymbol{\delta}'$ and $\mathbf{\Omega}=\mathbf{\Omega}'$. 
\end{theorem}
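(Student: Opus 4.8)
The plan is to establish the two implications separately, using Theorem \ref{coll} for sufficiency and a reduction to the univariate comparison of Lemma \ref{comp-dim-1} for necessity. For the ``if'' part, I assume $\mathbf{M} \le \mathbf{M}'$, $\boldsymbol{\delta} \le \boldsymbol{\delta}'$ and $\mathbf{\Omega}=\mathbf{\Omega}'$, and verify the three hypotheses of Theorem \ref{coll} for an arbitrary increasing $f$. Since $\mathbf{\Omega}=\mathbf{\Omega}'$ means that the corresponding Kronecker entries coincide, $v_{ij}'\sigma_{kl}'=v_{ij}\sigma_{kl}$, the first hypothesis holds with equality. An increasing $f$ satisfies $\frac{\partial}{\partial x_{kj}} f\left( \mathbf{X}\right) \ge 0$, while $m_{kj}'-m_{kj}\ge 0$ and $\delta'_{k+n(j-1)}-\delta_{k+n(j-1)}\ge 0$ by hypothesis, so the second and third hypotheses hold term by term. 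Theorem \ref{coll} then gives $Ef\left( \boldsymbol{Y}\right) \ge Ef\left( \boldsymbol{X}\right)$ for every increasing $f$, that is, $\boldsymbol{X} \le_{st} \boldsymbol{Y}$.

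For the ``only if'' part, I pass to $\boldsymbol{x}={\rm vec}(\boldsymbol{X})$ and $\boldsymbol{y}={\rm vec}(\boldsymbol{Y})$, which by Lemma \ref{lemma-mat-skew-normal} are multivariate skew-normal with parameters $({\rm vec}(\mathbf{M}),\mathbf{\Omega},\boldsymbol{\delta})$ and $({\rm vec}(\mathbf{M}'),\mathbf{\Omega}',\boldsymbol{\delta}')$, respectively. Fix any $\mathbf{a} \in \mathbb{R}^{np}$ with $\mathbf{a} \ge \boldsymbol{0}$. The map $\boldsymbol{x} \mapsto \mathbf{a}^T \boldsymbol{x}$ is nondecreasing in every coordinate, so composing it with an arbitrary increasing $h:\mathbb{R}\to\mathbb{R}$ yields an increasing function on $\mathbb{R}^{np}$; hence $\boldsymbol{X} \le_{st} \boldsymbol{Y}$ forces the scalar order $\mathbf{a}^T\boldsymbol{x}\le_{st}\mathbf{a}^T\boldsymbol{y}$. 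By Lemma \ref{closed} with the full-rank choice $\mathbf{A}=\mathbf{a}$, these projections are univariate skew-normal with location $\mathbf{a}^T{\rm vec}(\mathbf{M})$, squared scale $\mathbf{a}^T\mathbf{\Omega}\mathbf{a}$ and skewness parameter $\mathbf{a}^T\boldsymbol{\delta}$ (and their primed analogues for $\boldsymbol{y}$).

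Applying Lemma \ref{comp-dim-1} to this scalar comparison gives, for every $\mathbf{a}\ge\boldsymbol{0}$, the three requirements $\mathbf{a}^T{\rm vec}(\mathbf{M}) \le \mathbf{a}^T{\rm vec}(\mathbf{M}')$, $\mathbf{a}^T\mathbf{\Omega}\mathbf{a}=\mathbf{a}^T\mathbf{\Omega}'\mathbf{a}$ and $\mathbf{a}^T\boldsymbol{\delta}\le\mathbf{a}^T\boldsymbol{\delta}'$. Letting $\mathbf{a}$ range over the standard basis vectors recovers $\mathbf{M}\le\mathbf{M}'$ and $\boldsymbol{\delta}\le\boldsymbol{\delta}'$ at once. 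To recover the full identity $\mathbf{\Omega}=\mathbf{\Omega}'$, put $\mathbf{D}=\mathbf{\Omega}-\mathbf{\Omega}'$; the scale requirement reads $\mathbf{a}^T\mathbf{D}\mathbf{a}=0$ for all $\mathbf{a}\ge\boldsymbol{0}$. Taking $\mathbf{a}=\mathbf{e}_r$ gives $D_{rr}=0$ for each $r$, and then $\mathbf{a}=\mathbf{e}_r+\mathbf{e}_s$ gives $2D_{rs}=0$ for each $r\neq s$, whence $\mathbf{D}=\boldsymbol{0}$ and the necessity follows.

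The step I expect to be the main obstacle is this last recovery of $\mathbf{\Omega}=\mathbf{\Omega}'$: the one-dimensional marginals obtained from basis-vector projections yield only the equality of the variance (diagonal) entries, so one must invoke two-coordinate projections together with a polarization argument to upgrade diagonal equality to the full matrix identity. A secondary technical point is that invoking Theorem \ref{coll} tacitly requires the boundary conditions of Theorem \ref{comp} on $f$; these are handled in the standard way by first verifying the order for smooth increasing test functions and then extending to the whole increasing class by approximation.
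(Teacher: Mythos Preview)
Your proposal is correct and follows essentially the same approach as the paper: sufficiency via Theorem \ref{coll}, and necessity by projecting onto single coordinates $\mathbf{e}_r$ and onto pairwise sums $\mathbf{e}_r+\mathbf{e}_s$ to invoke Lemma \ref{comp-dim-1} and recover first the diagonal and then the off-diagonal entries of $\mathbf{\Omega}$. The only cosmetic difference is that you phrase the necessity argument uniformly through $\mathbf{a}^T{\rm vec}(\boldsymbol{X})$ for general $\mathbf{a}\ge\boldsymbol{0}$ before specializing, whereas the paper works directly with the marginals $\boldsymbol{X}_{ij}$ and the sums $\boldsymbol{X}_{ij}+\boldsymbol{X}_{kl}$.
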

\begin{proof}
Suppose $\mathbf{M} \le \mathbf{M}'$, 
$\boldsymbol{\delta} \le \boldsymbol{\delta}'$ and $\mathbf{\Omega}=\mathbf{\Omega}'$. We know that for any increasing differentiable function $f$, 
$\frac{\partial}{\partial x_{ij}} f \left( \mathbf{X}\right)\ge 0$, $ i=1, 2, \dots, n$, $j=1, 2, \dots, p$. 
Then, the conditions of Theorem \ref{coll} are satisfied. 
\par To prove the converse, we assume $\boldsymbol{X} \le_{st} \boldsymbol{Y}$. 
It is then easy to see that $\boldsymbol{X}_{ij} \le_{st} \boldsymbol{Y}_{ij}$, $i=1, 2, \dots, n, j=1, 2, \dots, p$, 
where 
\begin{equation} \label{XY-sim}
\boldsymbol{X}_{ij} \sim SN_1\left( m_{ij}, v_{ii}\sigma_{jj}, *,\delta_{i+n\left( j-1\right)}\right) , 
\boldsymbol{Y}_{ij} \sim SN_1\left( m'_{ij}, v'_{ii}\sigma'_{jj}, *,\delta'_{i+n\left( j-1\right)}\right). 
\end{equation}
Equation (\ref{XY-sim}) is derived by Lemmas \ref{closed} and \ref{lemma-mat-skew-normal}. 
According to Lemma \ref{comp-dim-1}, we know that $m_{ij} \ge m'_{ij}$, 
$\delta_{i+n\left( j-1\right)} \ge \delta'_{i+n\left( j-1\right)}$, 
and $v_{ii}\sigma_{jj} = v'_{ii}\sigma'_{jj}$. 
\par Also we know that $\boldsymbol{X}_{ij} + \boldsymbol{X}_{kl} \le_{st} \boldsymbol{Y}_{ij} + \boldsymbol{Y}_{kl}$ 
since $\boldsymbol{X} \le_{st} \boldsymbol{Y}$, where
\begin{equation} \label{X+X}
\boldsymbol{X}_{ij} + \boldsymbol{X}_{kl} \sim SN_1\left( m_{ij} + m_{kl}, v_{ii}\sigma_{jj} + v_{kk}\sigma_{ll}+2v_{ki}\sigma_{lj}, *,\delta_{i+n\left( j-1\right)}+\delta_{k+n\left( l-1\right)}\right), 
\end{equation}
\begin{equation} \label{Y+Y}
\boldsymbol{Y}_{ij} + \boldsymbol{Y}_{kl} \sim SN_1\left( m'_{ij} + m'_{kl}, v'_{ii}\sigma'_{jj} + v'_{kk}\sigma'_{ll}+2v'_{ki}\sigma'_{lj}, *,\delta'_{i+n\left( j-1\right)}+\delta'_{k+n\left( l-1\right)}\right). 
\end{equation}
(\ref{X+X}) and (\ref{Y+Y}) are derived by using Lemmas \ref{closed} and \ref{lemma-mat-skew-normal}. 
According to Lemma \ref{comp-dim-1}, we know that $v_{ki}\sigma_{lj} = v'_{ki}\sigma'_{lj} $, which means that $\Omega=\Omega'$. 
\end{proof}
\begin{remark}
Suppose $\mathbf{\Omega} = \mathbf{V} \otimes \boldsymbol{\Sigma}$ and $\mathbf{\Omega}' = \mathbf{V}' \otimes \boldsymbol{\Sigma}'$. Then, we have $\mathbf{\Omega} = \mathbf{\Omega}'$ 
if and only if there exists $a \in \mathbb{R}$, $a \neq 0$, such that
\begin{equation}
\mathbf{V}=a\mathbf{V}', \boldsymbol{\Sigma}=\frac{1}{a} \boldsymbol{\Sigma}'. 
\end{equation}
This is called the uniqueness of Kronecker product. As a result, it is clear that 
$\boldsymbol{X} \le_{st} \boldsymbol{Y}$ if and only if $\mathbf{M} \le \mathbf{M}'$, 
$\boldsymbol{\delta} \le \boldsymbol{\delta}'$, and that there exists $a \neq 0$ such that $\mathbf{V}=a\mathbf{V}'$ and $\boldsymbol{\Sigma}=\frac{1}{a} \boldsymbol{\Sigma}'$. 
\end{remark}
Conditions for convex order of matrix variate skew-normal distributions are deduced from the following theorem. 
We consider standardized skew-normal matrices $\boldsymbol{X}_0$ and $\boldsymbol{Y}_0$ in Case 2
since the location parameter $\mathbf{M}$ would abate the simplicity of results. 
\begin{theorem} \label{trick}
\begin{enumerate}
\item If $\mathbf{M} = \mathbf{M}'$, 
$\boldsymbol{\delta} = \boldsymbol{\delta}'$ and $\mathbf{\Omega}' - \mathbf{\Omega}$ is positive semi-definite, then 
$\boldsymbol{X} \le_{cx} \boldsymbol{Y}$;
\item $\boldsymbol{X}_0 \le_{cx} \boldsymbol{Y}_0$ if and only if 
$\boldsymbol{\delta} = \boldsymbol{\delta}'$ and $\overline{\mathbf{\Omega}'} - \overline{\mathbf{\Omega}}$ is positive semi-definite. 
\end{enumerate}
\end{theorem}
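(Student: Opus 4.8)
The plan is to treat the two parts separately. Part~1 and the sufficiency half of Part~2 will follow from the comparison criterion in Theorem~\ref{coll} (equivalently, directly from the identity (\ref{result1})), while the necessity half of Part~2 will be extracted from two classical moment consequences of the convex order together with the first- and second-moment formulas of Lemma~\ref{mean-cov}, applied to the vectorized distributions via Lemma~\ref{lemma-mat-skew-normal}.

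For Part~1, I would verify the three hypotheses of Theorem~\ref{coll}. Since $\mathbf{M}=\mathbf{M}'$ and $\boldsymbol{\delta}=\boldsymbol{\delta}'$, hypotheses (2) and (3) hold trivially because every summand vanishes. The remaining hypothesis (1) is exactly the quantity appearing as $\frac{1}{2}\,{\rm tr}\left(\left(\mathbf{\Omega}'-\mathbf{\Omega}\right)H_f\left(\mathbf{X}\right)\right)$ in the identity (\ref{result1}): matching the entry $v_{ij}\sigma_{kl}$ of $\mathbf{V}\otimes\boldsymbol{\Sigma}$, which sits in global position $\left(k+n(i-1),\,l+n(j-1)\right)$, with the Hessian entry $\partial^2 f/\partial x_{ki}\partial x_{lj}$ occupying that same position under the ${\rm vec}$-ordering, the quadruple sum collapses (by symmetry of both matrices) to the Frobenius pairing ${\rm tr}\left(\left(\mathbf{\Omega}'-\mathbf{\Omega}\right)H_f\left(\mathbf{X}\right)\right)$. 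Because $f$ is convex, its Hessian $H_f\left(\mathbf{X}\right)$ is positive semi-definite, and $\mathbf{\Omega}'-\mathbf{\Omega}$ is positive semi-definite by hypothesis; since the trace of a product of two positive semi-definite matrices is nonnegative, hypothesis (1) holds and Theorem~\ref{coll} gives $Ef\left(\boldsymbol{Y}\right)\ge Ef\left(\boldsymbol{X}\right)$ for every convex $f$, i.e.\ $\boldsymbol{X}\le_{cx}\boldsymbol{Y}$.

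For Part~2, the sufficiency direction is immediate from Part~1 applied with location matrices $\boldsymbol{0}_{n\times p}$, common skewness $\boldsymbol{\delta}=\boldsymbol{\delta}'$, and scale matrices $\overline{\mathbf{\Omega}},\overline{\mathbf{\Omega}'}$, since the standing hypothesis is precisely that $\overline{\mathbf{\Omega}'}-\overline{\mathbf{\Omega}}$ is positive semi-definite. For necessity, assume $\boldsymbol{X}_0\le_{cx}\boldsymbol{Y}_0$. Applying the order to an arbitrary linear function $f$ and to $-f$ (both convex) forces $E\boldsymbol{X}_0=E\boldsymbol{Y}_0$; since both location matrices are $\boldsymbol{0}$, Lemma~\ref{mean-cov} gives $E\left({\rm vec}\,\boldsymbol{X}_0\right)=\sqrt{2/\pi}\,\boldsymbol{\delta}$ and $E\left({\rm vec}\,\boldsymbol{Y}_0\right)=\sqrt{2/\pi}\,\boldsymbol{\delta}'$, whence $\boldsymbol{\delta}=\boldsymbol{\delta}'$. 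Next, applying the order to the convex maps $\mathbf{X}\mapsto\left(\boldsymbol{a}^T{\rm vec}\,\mathbf{X}\right)^2$ for arbitrary $\boldsymbol{a}\in\mathbb{R}^{np}$ yields $\boldsymbol{a}^T E\left({\rm vec}\,\boldsymbol{X}_0\,{\rm vec}\,\boldsymbol{X}_0^T\right)\boldsymbol{a}\le \boldsymbol{a}^T E\left({\rm vec}\,\boldsymbol{Y}_0\,{\rm vec}\,\boldsymbol{Y}_0^T\right)\boldsymbol{a}$; by the $\boldsymbol{\mu}=\boldsymbol{0}$ case of Lemma~\ref{mean-cov} these second-moment matrices equal $\overline{\mathbf{\Omega}}$ and $\overline{\mathbf{\Omega}'}$, so $\boldsymbol{a}^T\left(\overline{\mathbf{\Omega}'}-\overline{\mathbf{\Omega}}\right)\boldsymbol{a}\ge 0$ for every $\boldsymbol{a}$, i.e.\ $\overline{\mathbf{\Omega}'}-\overline{\mathbf{\Omega}}$ is positive semi-definite.

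The step requiring the most care is the index bookkeeping in Part~1 that identifies hypothesis (1) of Theorem~\ref{coll} with the trace ${\rm tr}\left(\left(\mathbf{\Omega}'-\mathbf{\Omega}\right)H_f\right)$; once this is in place, the whole argument rests only on the elementary fact that ${\rm tr}(\mathbf{A}\mathbf{B})\ge 0$ for positive semi-definite $\mathbf{A},\mathbf{B}$, together with the standard mean- and covariance-implications of the convex order, so no delicate analytic estimates are anticipated. A minor point to check is that the boundary (vanishing-at-infinity) conditions on $f$ demanded by Theorems~\ref{comp} and \ref{coll} are met by the linear and quadratic test functions invoked for necessity, or are recovered by the usual truncation and monotone-approximation argument for the convex order.
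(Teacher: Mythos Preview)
Your proposal is correct and follows essentially the same route as the paper: Part~1 via Theorem~\ref{coll} with the trace term nonnegative, and Part~2 necessity via linear test functions for $\boldsymbol{\delta}=\boldsymbol{\delta}'$ and quadratic test functions $\mathbf{X}\mapsto(\boldsymbol{a}^T{\rm vec}\,\mathbf{X})^2$ for the positive semi-definiteness of $\overline{\mathbf{\Omega}'}-\overline{\mathbf{\Omega}}$. The only cosmetic differences are that the paper proves ${\rm tr}\left((\mathbf{\Omega}'-\mathbf{\Omega})H_f\right)\ge 0$ by writing $\mathbf{\Omega}'-\mathbf{\Omega}=\mathbf{A}\mathbf{A}^T$ and expanding as $\sum_i \boldsymbol{a}_i^T H_f\,\boldsymbol{a}_i$ (your cited trace inequality packaged differently), and phrases the covariance step as a proof by contradiction rather than a direct argument; also note that your closing caveat about boundary conditions is relevant only for the sufficiency direction where Theorem~\ref{coll} is invoked, not for necessity, which uses the definition of $\le_{cx}$ directly.
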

\begin{proof}
1. If convex function $f$ is twice differentiable, then Hessian matrix $\boldsymbol{H}_f\left( \mathbf{X}\right)$ is 
positive semi-definite. As $\mathbf{\Omega}' - \mathbf{\Omega}$ is positive semi-definite, there
exists a matrix $\mathbf{A}_{np \times np}$ such that $\mathbf{\Omega}' - \mathbf{\Omega} = \mathbf{AA}^T$. Suppose 
$\mathbf{A}=\left( \boldsymbol{a}_1, \boldsymbol{a}_2, \dots, \boldsymbol{a}_{np}\right)$, where $\boldsymbol{a}_i$ is an $np$-dimensional column vector, for $i=1, 2, \dots, np$. Then, according to the properties of trace of matrices, we have
\begin{equation}
{\rm tr}\left( \left( \mathbf{\Omega} - \mathbf{\Omega}' \right)\boldsymbol{H}_f\left( \mathbf{X}\right)\right) = {\rm tr}\left( \mathbf{A}^T \boldsymbol{H}_f\left( \mathbf{X}\right) \mathbf{A}\right) =\sum_{i=1}^{np} \boldsymbol{a}_i^T \boldsymbol{H}_f\left( \mathbf{X}\right) \boldsymbol{a}_i \ge 0. 
\end{equation}
According to Theorem \ref{coll}, we have $Ef\left( \boldsymbol{Y}\right) - Ef\left( \boldsymbol{X}\right) \ge 0$
for all twice differentiable convex functions $f$, which means $\boldsymbol{X} \le_{cx} \boldsymbol{Y}$. 
\par 2. The proof of suffciency is obvious, and so we just prove the necessity. By letting $f\left( \mathbf{X}\right)=x_{ij}$
and $f\left( \mathbf{X}\right)=-x_{ij}$ and using Definitions \ref{def-F-order} and \ref{def-order}, 
we can easily see that the mean matrices of $\boldsymbol{X}_0$ and $\boldsymbol{Y}_0$ are identical. 
Then, according to Theorem \ref{mean-cov}, $E\left( \boldsymbol{X}_0\right) = E\left( \boldsymbol{Y}_0\right)$
which means $\boldsymbol{\delta} = \boldsymbol{\delta}'$. 
\par If $\overline{\mathbf{\Omega}}' - \overline{\mathbf{\Omega}}$ is not positive semi-definite, then there exists an 
$np$-dimensional column vector $\boldsymbol{b}$ such that $\boldsymbol{b}^T \overline{\mathbf{\Omega}} \boldsymbol{b} > \boldsymbol{b}^T \overline{\mathbf{\Omega}'} \boldsymbol{b}$. 
Let $f\left( \mathbf{X}\right)=\left( \boldsymbol{b}^T {\rm vec}\left(\mathbf{X}\right)\right)^2=\boldsymbol{b}^T {\rm vec}\left( \mathbf{X} \right) {\rm vec}\left( \mathbf{X} \right)^T \boldsymbol{b}$. 
Then, $f\left( \mathbf{X}\right)$ is obviously a convex function. Using Theorem \ref{mean-cov}, we have 
\begin{equation}
E\left( f(\boldsymbol{Y}_0)\right)-E\left( f(\boldsymbol{X}_0)\right) = \boldsymbol{b}^T \overline{\mathbf{\Omega}'}\boldsymbol{b} - \boldsymbol{b}^T \overline{\mathbf{\Omega}}\boldsymbol{b} <0, 
\end{equation}
which is a contradiction to $\boldsymbol{X}_0 \le_{cx} \boldsymbol{Y}_0$. 
\end{proof}
The following theorem provides conditions for comparing skew-normal random matrices under increasing convex order. 
We recall that a matrix $\mathbf{A}$ is said to be copositive if $ \boldsymbol{x}^T \mathbf{A} \boldsymbol{x} \ge 0$ 
for all $\boldsymbol{x} \ge 0$; 
see \cite{hadeler1983copositive} for details. 
\begin{theorem}
\begin{enumerate}
\item If $\mathbf{M} \le \mathbf{M}'$, 
$\boldsymbol{\delta} \le \boldsymbol{\delta}'$ and $\mathbf{\Omega}' - \mathbf{\Omega}$ is positive semi-definite, then 
$\boldsymbol{X} \le_{icx} \boldsymbol{Y}$;
\item $\boldsymbol{X}_0 \le_{icx} \boldsymbol{Y}_0$ if and only if 
$\boldsymbol{\delta} \le \boldsymbol{\delta}'$ and $\overline{\mathbf{\Omega}'} - \overline{\mathbf{\Omega}}$ is copositive. 
\end{enumerate}
\end{theorem}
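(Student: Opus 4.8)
The plan is to mirror the arguments already used for the usual stochastic and convex orders, leaning on the identity (\ref{result1}) together with its sign-condition reformulation in Theorem \ref{coll}. For Part 1, I would first recall that a twice differentiable increasing convex $f$ has nonnegative partials $\frac{\partial}{\partial x_{kj}}f(\mathbf{X})\ge 0$ and positive semi-definite Hessian $H_f(\mathbf{X})$. Condition~2 of Theorem \ref{coll} then follows from $\mathbf{M}\le\mathbf{M}'$ and condition~3 from $\boldsymbol{\delta}\le\boldsymbol{\delta}'$, since each pairs a nonnegative coefficient vector with nonnegative partials. For condition~1, I would reuse the factorization device from the proof of Theorem \ref{trick}(1): write $\mathbf{\Omega}'-\mathbf{\Omega}=\mathbf{A}\mathbf{A}^T$ by positive semi-definiteness and observe that ${\rm tr}((\mathbf{\Omega}'-\mathbf{\Omega})H_f)=\sum_i\boldsymbol{a}_i^T H_f\boldsymbol{a}_i\ge 0$. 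Theorem \ref{coll} then yields $\boldsymbol{X}\le_{icx}\boldsymbol{Y}$.

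For the sufficiency half of Part 2, the location terms drop out since $\mathbf{M}=\mathbf{M}'=\boldsymbol{0}$, so condition~2 is trivial and the delta term $\frac{2}{\sqrt{2\pi}}\nabla f(\mathbf{X})(\boldsymbol{\delta}'-\boldsymbol{\delta})\overline{\phi_\lambda}(\mathbf{X})$ in (\ref{result1}) is nonnegative because $\boldsymbol{\delta}\le\boldsymbol{\delta}'$ and $f$ is increasing. What remains is the quadratic term $\frac{1}{2}{\rm tr}((\overline{\mathbf{\Omega}'}-\overline{\mathbf{\Omega}})H_f(\mathbf{X}))$, and here the factorization device of Part~1 is unavailable: because $\overline{\mathbf{\Omega}}$ and $\overline{\mathbf{\Omega}'}$ are correlation matrices, $\overline{\mathbf{\Omega}'}-\overline{\mathbf{\Omega}}$ has zero diagonal and is never positive semi-definite unless it vanishes. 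The plan is therefore to pass to a generating subclass of the increasing convex order consisting of the directional functions $f(\mathbf{X})=g(\boldsymbol{b}^T{\rm vec}(\mathbf{X}))$ with $g$ univariate increasing convex and $\boldsymbol{b}\ge\boldsymbol{0}$; for such $f$ one has $H_f(\mathbf{X})=g''(\boldsymbol{b}^T{\rm vec}(\mathbf{X}))\,\boldsymbol{b}\boldsymbol{b}^T$, so that ${\rm tr}((\overline{\mathbf{\Omega}'}-\overline{\mathbf{\Omega}})H_f)=g''(\cdot)\,\boldsymbol{b}^T(\overline{\mathbf{\Omega}'}-\overline{\mathbf{\Omega}})\boldsymbol{b}\ge 0$ exactly because $\overline{\mathbf{\Omega}'}-\overline{\mathbf{\Omega}}$ is copositive and $g''\ge 0$.

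For the necessity half, I would argue in two steps. Taking the coordinate projections $f(\mathbf{X})=x_{ij}$, which are increasing and (being linear) convex, gives $E(\boldsymbol{X}_0)_{ij}\le E(\boldsymbol{Y}_0)_{ij}$; by Lemma \ref{mean-cov} the standardized means are $\sqrt{2/\pi}\,\boldsymbol{\delta}$ and $\sqrt{2/\pi}\,\boldsymbol{\delta}'$, whence $\boldsymbol{\delta}\le\boldsymbol{\delta}'$, and here only one inequality survives since $-x_{ij}$ is not increasing. To force copositivity I would argue by contradiction: if $\overline{\mathbf{\Omega}'}-\overline{\mathbf{\Omega}}$ is not copositive there is $\boldsymbol{b}\ge\boldsymbol{0}$ with $\boldsymbol{b}^T\overline{\mathbf{\Omega}}\boldsymbol{b}>\boldsymbol{b}^T\overline{\mathbf{\Omega}'}\boldsymbol{b}$. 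By Lemmas \ref{closed} and \ref{lemma-mat-skew-normal}, $U_X=\boldsymbol{b}^T{\rm vec}(\boldsymbol{X}_0)$ and $U_Y=\boldsymbol{b}^T{\rm vec}(\boldsymbol{Y}_0)$ are univariate skew-normal with scales $\boldsymbol{b}^T\overline{\mathbf{\Omega}}\boldsymbol{b}$ and $\boldsymbol{b}^T\overline{\mathbf{\Omega}'}\boldsymbol{b}$ and skewness shapes $\boldsymbol{b}^T\boldsymbol{\delta}\le\boldsymbol{b}^T\boldsymbol{\delta}'$, so that $f=g(\boldsymbol{b}^T{\rm vec}(\cdot))$ reduces the problem to $U_X\le_{icx}U_Y$; choosing $g$ a suitable increasing convex function (a smoothed stop-loss $(u-c)_+$ or $u_+^2$) should produce $Ef(\boldsymbol{Y}_0)<Ef(\boldsymbol{X}_0)$, contradicting $\boldsymbol{X}_0\le_{icx}\boldsymbol{Y}_0$.

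The hard part will be the sufficiency of copositivity, namely controlling the quadratic term on the nonnegative orthant alone. The delicate point is that copositivity only bounds $\boldsymbol{b}^T(\overline{\mathbf{\Omega}'}-\overline{\mathbf{\Omega}})\boldsymbol{b}$ for $\boldsymbol{b}\ge\boldsymbol{0}$, whereas a general increasing convex $f$ has a Hessian ranging over all positive semi-definite matrices, for which ${\rm tr}((\overline{\mathbf{\Omega}'}-\overline{\mathbf{\Omega}})H_f(\mathbf{X}))$ need not stay nonnegative; indeed, directions of the form $H_f=g''\,\boldsymbol{c}\boldsymbol{c}^T$ with $\boldsymbol{c}$ having mixed signs can make this trace negative. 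Everything therefore rests on justifying that the increasing convex order is generated by the directional family $g(\boldsymbol{b}^T{\rm vec}(\cdot))$ with $\boldsymbol{b}\ge\boldsymbol{0}$, and on the matching univariate skew-normal comparison (which must disentangle the competing scale and skewness effects). I expect this reduction to be the crux of the proof and the step most in need of careful justification.
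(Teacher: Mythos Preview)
Your Part~1 argument and the necessity half of Part~2 coincide with the paper's proof. The paper carries out the necessity contradiction exactly as you sketch: it projects along $\boldsymbol{a}\ge\boldsymbol{0}$ to univariate skew-normals $\overline{\boldsymbol{X}}_0,\overline{\boldsymbol{Y}}_0$ with scales $\sigma_0^2=\boldsymbol{a}^T\overline{\mathbf{\Omega}}\boldsymbol{a}$ and $(\sigma_0')^2=\boldsymbol{a}^T\overline{\mathbf{\Omega}'}\boldsymbol{a}$, and if $\sigma_0>\sigma_0'$ it shows
\[
\lim_{t\to\infty}\frac{E(\overline{\boldsymbol{Y}}_0-t)_+}{E(\overline{\boldsymbol{X}}_0-t)_+}
=\lim_{t\to\infty}\frac{1-F_{Y_0}(t)}{1-F_{X_0}(t)}
=\lim_{t\to\infty}\frac{f_{Y_0}(t)}{f_{X_0}(t)}=0
\]
via two applications of L'H\^opital, contradicting $\overline{\boldsymbol{X}}_0\le_{icx}\overline{\boldsymbol{Y}}_0$. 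This is precisely your ``smoothed stop-loss'' idea made concrete; you could simply adopt this tail-ratio computation rather than leave the choice of $g$ unspecified.

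For the sufficiency half of Part~2 there is a genuine gap, and it is one the paper does not close either: the paper's proof of Part~2 treats only the ``only if'' direction and never argues that copositivity together with $\boldsymbol{\delta}\le\boldsymbol{\delta}'$ forces $\boldsymbol{X}_0\le_{icx}\boldsymbol{Y}_0$. Your diagnosis of the obstacle is accurate: since $\overline{\mathbf{\Omega}'}-\overline{\mathbf{\Omega}}$ has zero diagonal, it cannot be positive semi-definite unless it vanishes, so the factorization trick from Part~1 is unavailable, and for a general increasing convex $f$ the term ${\rm tr}\bigl((\overline{\mathbf{\Omega}'}-\overline{\mathbf{\Omega}})H_f(\mathbf{X})\bigr)$ need not be nonnegative. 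Your proposed remedy, however, would not work: the directional family $\{g(\boldsymbol{b}^T{\rm vec}(\cdot)):\boldsymbol{b}\ge\boldsymbol{0},\ g\ \text{increasing convex}\}$ does \emph{not} generate the increasing convex order on $\mathbb{R}^{np}$; it generates only the strictly weaker increasing positive-linear-convex order. So reducing to that family would establish a weaker ordering than $\le_{icx}$, not the claim itself. In short, you are not missing an argument that the paper supplies; the sufficiency direction is left open there as well, and the route you tentatively propose cannot close it.
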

\begin{proof}
1. If $f$ is an increasing convex function, then we have $\frac{\partial}{\partial x_{ij}} f\left( \mathbf{X}\right) \ge 0$
and $H_f\left(\mathbf{X} \right)$ is positive semi-definite. As we know that $\mathbf{M} \le \mathbf{M}'$, 
$\boldsymbol{\delta} \le \boldsymbol{\delta}'$ and $\overline{\mathbf{\Omega}} - \overline{\mathbf{\Omega}}'$ is positive semi-definite. Then, the conditions of Theorem \ref{coll}
are satisfied, which means that $\boldsymbol{X} \le_{icx} \boldsymbol{Y}$. 
\par 2. By letting $f\left( \mathbf{X}\right)=x_{ij}$, which is an increasing convex function, we see
that $E\boldsymbol{X}_0 \le E\boldsymbol{Y}_0$. Using Theorem \ref{mean-cov}, we get $\boldsymbol{\delta} \le \boldsymbol{\delta}'$. 
\par Let $f\left( \mathbf{X}\right) = \boldsymbol{a}^T {\rm vec}\left( \mathbf{X} \right)$, $\boldsymbol{a} \ge \boldsymbol{0} $ and $ \boldsymbol{a} \neq \boldsymbol{0}$ , which 
is an increasing convex function. Then, we know that $\overline{X}_0 \le_{icx} \overline{Y}_0$, where
\begin{equation} \label{X_0}
\overline{\boldsymbol{X}}_0 = \boldsymbol{a}^T {\rm vec}\left( \boldsymbol{X}_0 \right) \sim SN_1\left( 0, \boldsymbol{a}^T \overline{\mathbf{\Omega}} \boldsymbol{a}, *,\boldsymbol{a}^T \boldsymbol{\delta}\right), 
\end{equation}
\begin{equation} \label{Y_0}
\overline{\boldsymbol{Y}}_0 = \boldsymbol{a}^T {\rm vec}\left( \boldsymbol{Y}_0 \right) \sim SN_1\left( 0, \boldsymbol{a}^T \overline{\mathbf{\Omega}'} \boldsymbol{a}, *,\boldsymbol{a}^T \boldsymbol{\delta}'\right), 
\end{equation}
which are derived by using Lemma \ref{closed}. Let $\sigma_0^2=\boldsymbol{a}^T \overline{\mathbf{\Omega}} \boldsymbol{a}$ and $\left( \sigma'_0\right)^2=\boldsymbol{a}^T \overline{\mathbf{\Omega}'} \boldsymbol{a}$. We now claim that 
$\sigma_0 \le \sigma'_0$. If $\sigma_0 > \sigma'_0$, then
\begin{equation}
\begin{split}
\lim_{t \to + \infty} \frac{E\left( \boldsymbol{Y}_0 - t\right)_+}{E\left( \boldsymbol{X}_0 - t\right)_+}&=\lim_{t \to + \infty} \frac{\int_t^{+\infty} 1-F_{Y_0} \left( x\right) dx}{\int_t^{+\infty} 1-F_{X_0} \left( x\right) dx} \\
&=\lim_{t \to + \infty} \frac{F_{Y_0} \left( t\right)-1}{F_{X_0} \left( t\right)-1}\\
&=\lim_{t \to + \infty} \frac{f_{Y_0} \left( t\right)}{f_{X_0} \left( t\right)} = 0, 
\end{split}
\end{equation}
where the last equality comes from Definition \ref{n-dim-sn-def}. 
This contradicts the fact that $\overline{\boldsymbol{X}}_0 \le_{icx} \overline{\boldsymbol{Y}}_0$ since both $E\left( \boldsymbol{Y}_0 - t\right)_+$ and $E\left( \boldsymbol{X}_0 - t\right)_+$ are nonnegative. 
This means that $\overline{\mathbf{\Omega}} - \overline{\boldsymbol{\Omega'}}$ is copositive. 
\end{proof}
For directionally convex order, we have the following result.
\begin{theorem}
$\boldsymbol{X}_0 \le_{dcx} \boldsymbol{Y}_0$ if and only if 
$\boldsymbol{\delta} = \boldsymbol{\delta}'$ and $\overline{\mathbf{\Omega}'} - \overline{\mathbf{\Omega}} \geq \boldsymbol{0}$. 
\end{theorem}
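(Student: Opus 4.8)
The plan is to follow the same template as the convex-order result in Theorem \ref{trick}, but to replace positive semi-definiteness by the entrywise nonnegativity that is native to directionally convex functions. The starting point is the second-derivative characterization stated in Section \ref{Preliminaries2}: a twice differentiable $f$ is directionally convex exactly when $\frac{\partial^2}{\partial x_{ki}\partial x_{lj}} f\left(\mathbf{X}\right) \ge 0$ for every pair of indices, both mixed and diagonal. Since $\boldsymbol{X}_0$ and $\boldsymbol{Y}_0$ both have zero location matrix, condition (2) of Theorem \ref{coll} holds trivially, and condition (3) reduces to the requirement $\boldsymbol{\delta}=\boldsymbol{\delta}'$.

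For sufficiency I would apply Theorem \ref{coll} with scale matrices $\overline{\mathbf{\Omega}}$ and $\overline{\mathbf{\Omega}'}$. Writing the single index of ${\rm vec}\left(\mathbf{X}\right)$ in the form $k+n\left(i-1\right)$, the coefficient multiplying $\frac{\partial^2}{\partial x_{ki}\partial x_{lj}} f$ in condition (1) is precisely the $\bigl(k+n\left(i-1\right),\, l+n\left(j-1\right)\bigr)$ entry of $\overline{\mathbf{\Omega}'}-\overline{\mathbf{\Omega}}$. If $\overline{\mathbf{\Omega}'}-\overline{\mathbf{\Omega}}\ge\boldsymbol{0}$ entrywise and $f$ is directionally convex, condition (1) is a sum of products of nonnegative quantities and is therefore nonnegative; combined with $\boldsymbol{\delta}=\boldsymbol{\delta}'$ and $\mathbf{M}=\mathbf{M}'=\boldsymbol{0}$, Theorem \ref{coll} yields $Ef\left(\boldsymbol{Y}_0\right)\ge Ef\left(\boldsymbol{X}_0\right)$ for all such $f$, i.e. $\boldsymbol{X}_0\le_{dcx}\boldsymbol{Y}_0$.

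For necessity I would feed particular directionally convex test functions into the definition of the order. As in Theorem \ref{trick}, the functions $f\left(\mathbf{X}\right)=x_{ij}$ and $f\left(\mathbf{X}\right)=-x_{ij}$ are both directionally convex, since all their second partials vanish, so $\boldsymbol{X}_0\le_{dcx}\boldsymbol{Y}_0$ forces $E\left(\boldsymbol{X}_0\right)=E\left(\boldsymbol{Y}_0\right)$; applying the moment formula of Theorem \ref{mean-cov} to ${\rm vec}\left(\boldsymbol{X}_0\right)$ and ${\rm vec}\left(\boldsymbol{Y}_0\right)$ through Lemma \ref{lemma-mat-skew-normal} then gives $\boldsymbol{\delta}=\boldsymbol{\delta}'$. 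To recover the scale condition I would test against the bilinear functions $f\left(\mathbf{X}\right)=x_{ki}x_{lj}$: for any index pair these are directionally convex, their only nonzero second partials being nonnegative. Because $\boldsymbol{X}_0$ and $\boldsymbol{Y}_0$ have mean zero, Theorem \ref{mean-cov} identifies $E\left[f\left(\boldsymbol{X}_0\right)\right]$ and $E\left[f\left(\boldsymbol{Y}_0\right)\right]$ with the matching entries of the second-moment matrices $\overline{\mathbf{\Omega}}$ and $\overline{\mathbf{\Omega}'}$, so the inequality $Ef\left(\boldsymbol{Y}_0\right)\ge Ef\left(\boldsymbol{X}_0\right)$ says exactly that this entry of $\overline{\mathbf{\Omega}'}-\overline{\mathbf{\Omega}}$ is nonnegative. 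Letting $\left(k,i\right)$ and $\left(l,j\right)$ range over all indices delivers $\overline{\mathbf{\Omega}'}-\overline{\mathbf{\Omega}}\ge\boldsymbol{0}$.

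The one genuine bookkeeping obstacle is the index translation: matching the double index $\left(k,i\right),\left(l,j\right)$ on the matrix entries $x_{ki},x_{lj}$ to the single ${\rm vec}$ index used in Theorems \ref{comp} and \ref{mean-cov}, through the Kronecker identification of the $\bigl(k+n\left(i-1\right),\, l+n\left(j-1\right)\bigr)$ entry of $\overline{\mathbf{\Omega}}$ with the covariance of $x_{ki}$ and $x_{lj}$. Once the vectorized second-moment identity $E\left[{\rm vec}\left(\boldsymbol{X}_0\right)\,{\rm vec}\left(\boldsymbol{X}_0\right)^T\right]=\overline{\mathbf{\Omega}}$ from Theorem \ref{mean-cov} is in hand, every remaining step is a direct verification.
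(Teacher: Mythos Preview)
Your proposal is correct and follows essentially the same route as the paper: sufficiency via Theorem \ref{coll} (which the paper simply calls ``obvious''), and necessity by testing first against $\pm x_{ij}$ to pin down $\boldsymbol{\delta}=\boldsymbol{\delta}'$, then against the bilinear functions $x_{ki}x_{lj}$ to extract the entrywise inequality on second-moment matrices through Lemma \ref{mean-cov}. One wording slip: $\boldsymbol{X}_0$ and $\boldsymbol{Y}_0$ do not have mean zero (their means are $\sqrt{2/\pi}\,\boldsymbol{\delta}$ and $\sqrt{2/\pi}\,\boldsymbol{\delta}'$); what you need, and what Lemma \ref{mean-cov} actually gives, is that their \emph{location parameters} are zero, which is what forces $E\bigl[{\rm vec}(\boldsymbol{X}_0){\rm vec}(\boldsymbol{X}_0)^T\bigr]=\overline{\mathbf{\Omega}}$.
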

\begin{proof}
As the suffciency is obvious, we just prove the necessity. $\boldsymbol{\delta} = \boldsymbol{\delta}'$ can be proved by using the same argument as in 
the proof of Theorem \ref{trick}, since both $f\left( \mathbf{X}\right)=x_{ij}$
and $f\left( \mathbf{X}\right)=-x_{ij}$ are directionally convex functions. 
\par $E\left( \boldsymbol{X}_{0ij}\boldsymbol{X}_{0kl}\right) \le E\left( \boldsymbol{Y}_{0ij}\boldsymbol{Y}_{0kl}\right)$ can be derived by letting $f\left( \mathbf{X}\right)=x_{ij}x_{kl}$. 
Thus, it can be shown that $E\left({\rm vec}\left(\boldsymbol{X}_0\right){\rm vec}\left(\boldsymbol{X}_0\right)^T\right) $ $\le$ $ E\left({\rm vec}\left(\boldsymbol{Y}_0\right){\rm vec}\left(\boldsymbol{Y}_0\right)^T\right)$, which yields $\overline{\mathbf{\Omega}'} - \overline{\mathbf{\Omega}} \geq \boldsymbol{0}$
by using Lemma \ref{mean-cov}. 
\end{proof}
\cite{azzalini2013skew} have presented a stochastic representation of multivariate skew-normal distribution as follows.
\begin{lemma} \label{lemma-2013}
(\cite{azzalini2013skew}) If $\boldsymbol{X} \sim SN_n\left(\boldsymbol{0}, \overline{\mathbf{\Omega}},\boldsymbol{\alpha},*\right)$, 
then $\boldsymbol{X} \overset{d}{=} \boldsymbol{U} | \lbrace V < \boldsymbol{\alpha}^T \boldsymbol{U}\rbrace$, where $\boldsymbol{U} \sim N_n\left( \boldsymbol{0}, \overline{\mathbf{\Omega}}\right)$ and is 
independent of the random variable $V \sim N\left(0, 1\right)$.
\end{lemma}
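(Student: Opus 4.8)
The plan is to compute directly the conditional density of $\boldsymbol{U}$ given the selection event $\lbrace V < \boldsymbol{\alpha}^T \boldsymbol{U}\rbrace$ and to verify that it coincides with the skew-normal density in Definition \ref{n-dim-sn-def}. First I would observe that since $\overline{\mathbf{\Omega}}$ is a correlation matrix, its diagonal entries are all unity, so the scaling matrix $\boldsymbol{\omega}$ equals the identity; hence the target density reduces to $f_X(\boldsymbol{x}) = 2\phi_n(\boldsymbol{x}; \boldsymbol{0}, \overline{\mathbf{\Omega}})\Phi(\boldsymbol{\alpha}^T\boldsymbol{x})$, which is the expression I must recover.

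Next I would write the conditional density via the elementary conditioning formula
\begin{equation} \nonumber
f_{\boldsymbol{U}\mid\lbrace V<\boldsymbol{\alpha}^T\boldsymbol{U}\rbrace}(\boldsymbol{u}) = \frac{f_{\boldsymbol{U}}(\boldsymbol{u})\,P\left(V<\boldsymbol{\alpha}^T\boldsymbol{U}\mid \boldsymbol{U}=\boldsymbol{u}\right)}{P\left(V<\boldsymbol{\alpha}^T\boldsymbol{U}\right)}.
\end{equation}
By the independence of $V$ and $\boldsymbol{U}$, the conditional probability in the numerator is simply $P(V<\boldsymbol{\alpha}^T\boldsymbol{u})=\Phi(\boldsymbol{\alpha}^T\boldsymbol{u})$, since $V\sim N(0,1)$. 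Combined with $f_{\boldsymbol{U}}(\boldsymbol{u})=\phi_n(\boldsymbol{u};\boldsymbol{0},\overline{\mathbf{\Omega}})$, the numerator already reproduces the shape of the skew-normal density up to the normalizing denominator, so everything hinges on that denominator.

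The key step — the only one requiring any computation — is to evaluate the normalizing constant $P(V<\boldsymbol{\alpha}^T\boldsymbol{U})$. I would set $W := V-\boldsymbol{\alpha}^T\boldsymbol{U}$, which is a linear combination of the jointly normal pair $(V,\boldsymbol{U})$ and hence normal, with mean zero and, by independence, variance $1+\boldsymbol{\alpha}^T\overline{\mathbf{\Omega}}\boldsymbol{\alpha}$. Because $W$ is a centered normal variable, symmetry forces $P(W<0)=1/2$ irrespective of its variance; this is precisely the mechanism that produces the factor $2$ characterizing the skew-normal law. Substituting $1/2$ for the denominator yields
\begin{equation} \nonumber
f_{\boldsymbol{U}\mid\lbrace V<\boldsymbol{\alpha}^T\boldsymbol{U}\rbrace}(\boldsymbol{u}) = 2\phi_n(\boldsymbol{u};\boldsymbol{0},\overline{\mathbf{\Omega}})\Phi(\boldsymbol{\alpha}^T\boldsymbol{u}) = f_X(\boldsymbol{u}),
\end{equation}
which establishes the distributional identity. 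I do not anticipate a genuine obstacle; the only point meriting care is the reduction $\boldsymbol{\omega}=I_n$ for a correlation matrix, which guarantees that the argument of $\Phi$ in Definition \ref{n-dim-sn-def} is exactly $\boldsymbol{\alpha}^T\boldsymbol{x}$ with no extra scaling factor.
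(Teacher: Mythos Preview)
Your argument is correct and is precisely the standard selection-mechanism derivation of the multivariate skew-normal law: reduce $\boldsymbol{\omega}$ to the identity because $\overline{\mathbf{\Omega}}$ is a correlation matrix, write the conditional density, use independence to get $\Phi(\boldsymbol{\alpha}^T\boldsymbol{u})$ in the numerator, and use the symmetry of the centered Gaussian $V-\boldsymbol{\alpha}^T\boldsymbol{U}$ to obtain the normalizing constant $1/2$.

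Note, however, that the paper itself does not supply a proof of this lemma; it is quoted as a known result from \cite{azzalini2013skew}. Your computation is exactly the argument given in that reference, so there is nothing to compare against in the present paper---you have simply filled in the omitted standard proof.
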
 
\begin{theorem} \label{theorem-2013}
If $\boldsymbol{X} \sim SN_n\left( \boldsymbol{\mu}, \mathbf{\Omega}, \boldsymbol{\alpha},*\right)$, 
then $\boldsymbol{X} \overset{d}{=} \boldsymbol{\mu} + \boldsymbol{U} | \lbrace V < \boldsymbol{\alpha}^T \boldsymbol{\omega}^{-1} \boldsymbol{U}\rbrace$, where $\boldsymbol{U} \sim N_n\left( \boldsymbol{0}, \mathbf{\Omega}\right)$ and is 
independent of the random variable $V \sim N\left(0, 1\right)$.     
\end{theorem}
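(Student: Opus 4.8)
The plan is to deduce the general representation from the standardized one in Lemma~\ref{lemma-2013} by first standardizing $\boldsymbol{X}$, applying the known representation, and then transforming back. First I would set $\boldsymbol{X}_0 = \boldsymbol{\omega}^{-1}(\boldsymbol{X} - \boldsymbol{\mu})$ and identify its law. Subtracting the constant $\boldsymbol{\mu}$ only shifts the location, as is immediate from the density in Definition~\ref{n-dim-sn-def}, so $\boldsymbol{X} - \boldsymbol{\mu} \sim SN_n(\boldsymbol{0}, \mathbf{\Omega}, \boldsymbol{\alpha}, *)$; applying Lemma~\ref{closed} with $\mathbf{A} = \boldsymbol{\omega}^{-1}$ then gives scale matrix $\boldsymbol{\omega}^{-1}\mathbf{\Omega}\boldsymbol{\omega}^{-1} = \overline{\mathbf{\Omega}}$, and since this is exactly the case $\mathbf{A} = \boldsymbol{\omega}^{-1}$ singled out in Remark~\ref{A-eq-w}, the skewness parameter is unchanged. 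Hence $\boldsymbol{X}_0 \sim SN_n(\boldsymbol{0}, \overline{\mathbf{\Omega}}, \boldsymbol{\alpha}, *)$, which is precisely the standardized form required by Lemma~\ref{lemma-2013}.

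Next I would invoke Lemma~\ref{lemma-2013} to write $\boldsymbol{X}_0 \overset{d}{=} \boldsymbol{U}_0 \mid \{V < \boldsymbol{\alpha}^T\boldsymbol{U}_0\}$, where $\boldsymbol{U}_0 \sim N_n(\boldsymbol{0}, \overline{\mathbf{\Omega}})$ is independent of $V \sim N(0,1)$, and then invert the standardization via $\boldsymbol{X} = \boldsymbol{\mu} + \boldsymbol{\omega}\boldsymbol{X}_0$. Putting $\boldsymbol{U} = \boldsymbol{\omega}\boldsymbol{U}_0$, the closure of the Gaussian family under linear maps yields $\boldsymbol{U} \sim N_n(\boldsymbol{0}, \boldsymbol{\omega}\overline{\mathbf{\Omega}}\boldsymbol{\omega}) = N_n(\boldsymbol{0}, \mathbf{\Omega})$, still independent of $V$. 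The one step that needs genuine care is transporting the conditioning event, since it is phrased through the pre-transformed Gaussian $\boldsymbol{U}_0$: because $\boldsymbol{\omega}$ is diagonal with positive entries and hence invertible, $\boldsymbol{U}_0 = \boldsymbol{\omega}^{-1}\boldsymbol{U}$, so $\{V < \boldsymbol{\alpha}^T\boldsymbol{U}_0\} = \{V < \boldsymbol{\alpha}^T\boldsymbol{\omega}^{-1}\boldsymbol{U}\}$; the added location $\boldsymbol{\mu}$ is applied after conditioning and therefore does not touch the event. Combining these facts gives $\boldsymbol{X} \overset{d}{=} \boldsymbol{\mu} + \boldsymbol{U} \mid \{V < \boldsymbol{\alpha}^T\boldsymbol{\omega}^{-1}\boldsymbol{U}\}$, which is the assertion.

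As a self-contained alternative that I would keep in reserve, one can verify the claim directly at the level of densities. Conditioning $\boldsymbol{U} \sim N_n(\boldsymbol{0}, \mathbf{\Omega})$ on $\{V < \boldsymbol{\alpha}^T\boldsymbol{\omega}^{-1}\boldsymbol{U}\}$ produces, by independence of $V$ and the relation $P(V < c) = \Phi(c)$, a conditional density proportional to $\phi_n(\boldsymbol{u}; \boldsymbol{0}, \mathbf{\Omega})\Phi(\boldsymbol{\alpha}^T\boldsymbol{\omega}^{-1}\boldsymbol{u})$; the normalizing constant is $1/2$ because $V - \boldsymbol{\alpha}^T\boldsymbol{\omega}^{-1}\boldsymbol{U}$ is a centred Gaussian and hence symmetric about the origin, so the conditional density equals $2\phi_n(\boldsymbol{u}; \boldsymbol{0}, \mathbf{\Omega})\Phi(\boldsymbol{\alpha}^T\boldsymbol{\omega}^{-1}\boldsymbol{u})$. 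Translating by $\boldsymbol{\mu}$ replaces $\boldsymbol{u}$ by $\boldsymbol{z} - \boldsymbol{\mu}$ and $\phi_n(\cdot; \boldsymbol{0}, \mathbf{\Omega})$ by $\phi_n(\cdot; \boldsymbol{\mu}, \mathbf{\Omega})$, recovering exactly the density of $SN_n(\boldsymbol{\mu}, \mathbf{\Omega}, \boldsymbol{\alpha})$ in Definition~\ref{n-dim-sn-def}. In either route nothing beyond routine bookkeeping of the conditioning event is needed, so I expect that tracking the event through the scaling by $\boldsymbol{\omega}$ will be the only delicate point.
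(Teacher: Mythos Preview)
Your proposal is correct and follows essentially the same route as the paper: both reduce to the standardized case of Lemma~\ref{lemma-2013} via the scaling $\boldsymbol{\omega}$, then transform back using Remark~\ref{A-eq-w} to keep the skewness parameter unchanged; the paper starts from $\boldsymbol{U}$ and builds up, while you start from $\boldsymbol{X}$ and standardize down, but the content is identical. Your additional direct density verification is a nice sanity check that the paper omits.
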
 
\begin{proof}
We know that if $\boldsymbol{U} \sim N_n\left( \boldsymbol{0}, \mathbf{\Omega}\right)$, then $\boldsymbol{\omega}^{-1}\boldsymbol{U} \sim N_n\left( \boldsymbol{0}, \overline{\mathbf{\Omega}}\right)$. 
According to Lemma \ref{lemma-2013}, we have $\boldsymbol{\omega}^{-1}\boldsymbol{U}| \lbrace V < \boldsymbol{\alpha}^T \boldsymbol{\omega}^{-1} \boldsymbol{U}\rbrace \sim SN_n\left(\boldsymbol{0}, \overline{\mathbf{\Omega}},\boldsymbol{\alpha},*\right)$, 
and according to Remark \ref{A-eq-w}, $\boldsymbol{\omega} \boldsymbol{\omega}^{-1}\boldsymbol{U}| \lbrace V < \boldsymbol{\alpha}^T \boldsymbol{\omega}^{-1} \boldsymbol{U}\rbrace \sim SN_n\left(\boldsymbol{0}, \mathbf{\Omega},\boldsymbol{\alpha},*\right)$.
\end{proof}
The upper orthant order, given in Definition \ref{def-order}, can also be defined through a comparison of upper orthants. 
Specifically, $\boldsymbol{X} \leq_{uo} \boldsymbol{Y}$ holds if and only if 
$P\left( \boldsymbol{X} > \mathbf{T}\right) \le P\left( \boldsymbol{Y} > \mathbf{T}\right)$ holds for all matrices $\mathbf{T}$. 
The two definitions can be shown to be equivalent. 
The following theorem provides conditions for comparing matrix variate skew-normal distributions under upper orthant order.
\begin{theorem}
\begin{enumerate}
\item If $\mathbf{M} = \mathbf{M}'$, $\boldsymbol{\delta} \le \boldsymbol{\delta}'$, $\omega_{ii}=\omega'_{ii}$ and $ \omega_{ij} \le \omega'_{ij}$, then 
$\boldsymbol{X} \le_{uo} \boldsymbol{Y}$;
\item If $\boldsymbol{X} \le_{uo} \boldsymbol{Y}$, then $\mathbf{M} \le \mathbf{M}'$, $\boldsymbol{\delta} \le \boldsymbol{\delta}'$ and $ \omega_{ii}=\omega'_{ii}$. 
\end{enumerate}
\end{theorem}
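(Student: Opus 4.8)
The plan is to prove the two implications by different routes: the sufficiency in part~1 follows by verifying the three hypotheses of Theorem~\ref{coll} for every $\boldsymbol{\Delta}$-monotone test function, while the necessity in part~2 is obtained by reducing to the one-dimensional marginals and invoking Lemma~\ref{comp-dim-1}. Throughout I would use the equivalence, recorded just before the statement, that $\boldsymbol{X}\le_{uo}\boldsymbol{Y}$ holds iff $P(\boldsymbol{X}>\mathbf{T})\le P(\boldsymbol{Y}>\mathbf{T})$ for all $\mathbf{T}$, together with the fact that the defining class $\boldsymbol{F}$ for $\le_{uo}$ is the class of $\boldsymbol{\Delta}$-monotone functions.

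For part~1, I would first translate $\boldsymbol{\Delta}$-monotonicity of a twice differentiable $f$ into sign conditions on derivatives: the first-order increments give $\frac{\partial}{\partial x_{kj}}f(\mathbf{X})\ge 0$, and the second-order increments over \emph{distinct} index pairs give $\frac{\partial^2}{\partial x_{ki}\partial x_{lj}}f(\mathbf{X})\ge 0$ whenever $(k,i)\neq(l,j)$. Since $\mathbf{M}=\mathbf{M}'$, the mean condition in Theorem~\ref{coll} is trivially met; since $\boldsymbol{\delta}\le\boldsymbol{\delta}'$ and the first partials are nonnegative, the $\boldsymbol{\delta}$-condition holds. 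The remaining term is the second-order sum $\sum_{k,l,i,j}(v'_{ij}\sigma'_{kl}-v_{ij}\sigma_{kl})\frac{\partial^2}{\partial x_{ki}\partial x_{lj}}f(\mathbf{X})$, which I would split according to whether $(k,i)=(l,j)$. The diagonal terms carry the coefficient $\omega'_{aa}-\omega_{aa}$ with $a=(i-1)n+k$, and these vanish by the hypothesis $\omega_{ii}=\omega'_{ii}$; the off-diagonal terms carry the coefficient $\omega'_{ab}-\omega_{ab}\ge 0$ (from $\omega_{ij}\le\omega'_{ij}$) multiplied by a distinct-index mixed partial that is itself $\ge 0$. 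Hence the full sum is nonnegative, all three hypotheses of Theorem~\ref{coll} are satisfied, and $\boldsymbol{X}\le_{uo}\boldsymbol{Y}$ follows.

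For part~2, I would pass to the univariate marginals. Sending every entry of $\mathbf{T}$ except the $(i,j)$ one to $-\infty$ in the orthant inequality yields $P(\boldsymbol{X}_{ij}>t)\le P(\boldsymbol{Y}_{ij}>t)$ for all $t$, i.e.\ $\boldsymbol{X}_{ij}\le_{st}\boldsymbol{Y}_{ij}$, because in one dimension the upper orthant order coincides with the usual stochastic order. By Lemmas~\ref{closed} and~\ref{lemma-mat-skew-normal}, each marginal is univariate skew-normal, $\boldsymbol{X}_{ij}\sim SN_1(m_{ij},\omega_{aa},*,\delta_a)$ and $\boldsymbol{Y}_{ij}\sim SN_1(m'_{ij},\omega'_{aa},*,\delta'_a)$ with $a=i+n(j-1)$. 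Lemma~\ref{comp-dim-1} then forces, simultaneously, $m_{ij}\le m'_{ij}$, $\delta_a\le\delta'_a$ and $\omega_{aa}=\omega'_{aa}$; letting $i,j$ range over all entries yields exactly $\mathbf{M}\le\mathbf{M}'$, $\boldsymbol{\delta}\le\boldsymbol{\delta}'$ and $\omega_{ii}=\omega'_{ii}$. One could instead read the mean inequality from Lemma~\ref{mean-cov}, but the marginal argument delivers all three conclusions at once.

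The step I expect to require the most care is the second-order sum in part~1. Because $\boldsymbol{\Delta}$-monotonicity gives no information on the pure second derivatives $\frac{\partial^2}{\partial x_{ki}^2}f$, the argument genuinely needs the \emph{exact} diagonal matching $\omega_{ii}=\omega'_{ii}$ to annihilate the uncontrolled diagonal contributions, whereas off the diagonal only the inequality $\omega_{ij}\le\omega'_{ij}$ is available and is exactly enough. Making the diagonal versus off-diagonal bookkeeping precise under the vec-indexing $a=(i-1)n+k$ is where I would be most careful to avoid index and sign errors.
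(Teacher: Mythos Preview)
Your proof of part~2 is essentially the same as the paper's: both pass to the univariate marginals, use that upper orthant order and usual stochastic order coincide in one dimension, and then invoke Lemma~\ref{comp-dim-1}.

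For part~1, your argument is correct but follows a genuinely different route from the paper. The paper does \emph{not} use Theorem~\ref{coll}; instead it vectorizes $\boldsymbol{X}$ and $\boldsymbol{Y}$, applies the stochastic representation of Theorem~\ref{theorem-2013} to write each upper-orthant probability as $2P(\boldsymbol{Z}>(\boldsymbol{t}-{\rm vec}(\mathbf{M}),0))$ for an $(np+1)$-dimensional normal vector $\boldsymbol{Z}$ with covariance $\begin{pmatrix}\mathbf{\Omega}&\boldsymbol{\delta}\\\boldsymbol{\delta}^T&1\end{pmatrix}$, and then compares these probabilities directly via Slepian's inequality. Your approach stays entirely within the machinery of Section~\ref{main-results}: you verify the three hypotheses of Theorem~\ref{coll} for smooth $\boldsymbol{\Delta}$-monotone $f$, exploiting that such $f$ have nonnegative first partials and nonnegative \emph{off-diagonal} mixed partials, so that the equal-diagonal hypothesis $\omega_{ii}=\omega'_{ii}$ kills the uncontrolled pure second derivatives while $\omega_{ij}\le\omega'_{ij}$ handles the rest. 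This is exactly how the paper treats the sufficiency for the convex, increasing convex, directionally convex and supermodular orders, so your argument is arguably more in keeping with the paper's overall programme; the trade-off is that it implicitly relies on a smooth-generator reduction for $\le_{uo}$ (in the spirit of \cite{denuit2002smooth}), whereas the paper's Slepian route works directly with the survival function and needs no such approximation.
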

\begin{proof}
1. Let $\boldsymbol{X}$ and $\boldsymbol{Y}$ be distributed as in (\ref{supposexy}), and that $\boldsymbol{X}$ and $\boldsymbol{Y}$ have skewness parameters $\mathbf{B}$ and $\mathbf{B}'$, respectively. Then, we have 
${\rm vec}\left( \boldsymbol{X}\right) \sim SN_{np}\left({\rm vec}\left( \mathbf{M}\right), \mathbf{\Omega},{\rm vec}\left( \mathbf{B}\right), \boldsymbol{\delta} \right)$ 
and ${\rm vec}\left( \boldsymbol{Y}\right) \sim SN_{np} ( {\rm vec}\left( \mathbf{M}'\right), \mathbf{\Omega}',$ $ {\rm vec}\left( \mathbf{B}'\right),\boldsymbol{\delta}' )$ by using Lemma \ref{lemma-mat-skew-normal} and Remark \ref{share-delta}. 
According to Lemma \ref{lemma-2013}, we have 
${\rm vec}\left( \boldsymbol{X}\right) \overset{d}{=} {\rm vec}\left( \mathbf{M}\right) + \boldsymbol{U} | \lbrace V< {\rm vec}\left(\mathbf{B}\right)^T \boldsymbol{\omega}^{-1} \boldsymbol{U}\rbrace$ 
and ${\rm vec}\left( \boldsymbol{Y}\right) \overset{d}{=} {\rm vec}\left( \mathbf{M}'\right) + \boldsymbol{U}' | \lbrace V'<{\rm vec}\left( \mathbf{B}'\right)^T \boldsymbol{\omega}'^{-1} \boldsymbol{U}'\rbrace$. 
Then, 
\begin{equation}
P\left({\rm vec}\left( \boldsymbol{X}\right) > \boldsymbol{t}\right)=2P\left(\boldsymbol{Z}>\left( \boldsymbol{t} - {\rm vec}\left( \mathbf{M}\right), 0\right)\right), 
P\left({\rm vec}\left( \boldsymbol{Y}\right) > \boldsymbol{t}\right)=2P\left(\boldsymbol{Z}'>\left( \boldsymbol{t} - {\rm vec}\left( \mathbf{M}'\right), 0\right)\right), 
\end{equation}
where 
\begin{equation}
\boldsymbol{Z} \sim N_{np+1} \left( \boldsymbol{0}, \begin{pmatrix}
\mathbf{\Omega} & \boldsymbol{\delta} \\ \boldsymbol{\delta}^T & 1
\end{pmatrix} 
\right), 
\boldsymbol{Z}' \sim N_{np+1} \left( \boldsymbol{0}, \begin{pmatrix}
\mathbf{\Omega}' & \boldsymbol{\delta}' \\ \boldsymbol{\delta}'^T & 1
\end{pmatrix} 
\right).
\end{equation}
Considering the conditions $\mathbf{M} = \mathbf{M}'$, $\boldsymbol{\delta} \le \boldsymbol{\delta}'$, 
$ \omega_{ii}=\omega'_{ii}$ and $ \omega_{ij} \le \omega'_{ij}$ and using Slepian's inequality (Theorem 2.1.1 in \cite{tong2014probability}), 
we can conclude that $P\left( \boldsymbol{X} > \mathbf{T}\right) \le P\left( \boldsymbol{Y} > \mathbf{T}\right)$, 
which means $\boldsymbol{X} \le_{uo} \boldsymbol{Y}$. 
\par 2. $\boldsymbol{X}_{ij} \le_{st} \boldsymbol{Y}_{ij}$ can be 
obtained from $\boldsymbol{X} \le_{uo} \boldsymbol{Y}$ due to the equivalence between stochastic order and upper orthant order in the univariate case. 
Then, $\mathbf{M} \le \mathbf{M}'$, $\boldsymbol{\delta} \le \boldsymbol{\delta}'$ and $ \omega_{ii}=\omega'_{ii}$ 
can be proved by using the same idea as in the proof of Theorem \ref{theorem-storder}. 
\end{proof}
\begin{remark}
Proposition 4.6 of \cite{jamali2020comparison} provides some conditions for comparing multivariate skew-normal distributions
under upper orthant order. 
We must point out that Part (i) in Proposition 4.6 of \cite{jamali2020comparison} seems to be in error due to the misuse of Slepian's inequality.
The correct proposition should in fact be as follows: Suppose $\boldsymbol{X} \sim SN_n\left(\xi,\boldsymbol{\Omega},*,\boldsymbol{\delta}\right)$ and  
$\boldsymbol{Y} \sim SN_n\left(\xi',\boldsymbol{\Omega}',*,\boldsymbol{\delta}'\right)$. 
If $\xi = \xi'$, $\boldsymbol{\delta} \le \boldsymbol{\delta}'$, 
$\omega_{ii}=\omega'_{ii}$ and $ \omega_{ij} \le \omega'_{ij}$, then 
$\boldsymbol{X} \le_{uo} \boldsymbol{Y}$.
\end{remark}
The following result generalizes Theorem 11 in \cite{muller2001stochastic} for the multivariate normal case
to the setting considered here. 
\begin{theorem}
$\boldsymbol{X}_0 \le_{sm} \boldsymbol{Y}_0$ if and only if $\boldsymbol{X}_0$ and $\boldsymbol{Y}_0$ have the same 
marginals and $\omega_{ij} \le \omega'_{ij}$. 
\end{theorem}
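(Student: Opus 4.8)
The plan is to prove the two implications separately, using Theorem \ref{coll} for the sufficiency and a family of explicitly chosen supermodular test functions for the necessity. The structural fact I would exploit throughout is that $\boldsymbol{X}_0$ and $\boldsymbol{Y}_0$ are centered with scale matrices $\overline{\mathbf{\Omega}}$ and $\overline{\mathbf{\Omega}'}$ that are genuine correlation matrices, so all their diagonal entries equal $1$; by Lemma \ref{lemma-mat-skew-normal} their vectorizations are multivariate skew-normal with these same parameters.

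For the sufficiency, suppose $\boldsymbol{X}_0$ and $\boldsymbol{Y}_0$ have the same marginals and $\omega_{ij} \le \omega'_{ij}$ off the diagonal. Identical marginals force each component to be $SN_1\left(0,1,*,\delta_m\right)$ for both matrices, hence $\boldsymbol{\delta} = \boldsymbol{\delta}'$; together with $\mathbf{M} = \mathbf{M}' = \boldsymbol{0}$ this makes conditions (2) and (3) of Theorem \ref{coll} hold with equality. The whole argument then rests on condition (1). For a twice-differentiable supermodular $f$, the mixed partial $\frac{\partial^2}{\partial x_{ki} \partial x_{lj}} f\left( \mathbf{X}\right)$ is nonnegative whenever $(k,i) \neq (l,j)$, while the genuine diagonal terms $(k,i) = (l,j)$ carry the coefficient $[\overline{\mathbf{\Omega}'}]_{mm} - [\overline{\mathbf{\Omega}}]_{mm} = 1 - 1 = 0$ and therefore drop out precisely because we compare correlation matrices. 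Thus condition (1) collapses to a sum of nonnegative off-diagonal coefficients $\omega'_{ij} - \omega_{ij} \ge 0$ multiplied by nonnegative mixed partials, so it is nonnegative and Theorem \ref{coll} delivers $Ef\left( \boldsymbol{Y}_0\right) \ge Ef\left( \boldsymbol{X}_0\right)$.

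For the necessity, assume $\boldsymbol{X}_0 \le_{sm} \boldsymbol{Y}_0$. Choosing $f\left( \mathbf{X}\right) = g\left( x_{ki}\right)$ for an arbitrary univariate $g$ yields a supermodular function, since all its mixed partials vanish, and $-g\left( x_{ki}\right)$ is supermodular as well; applying the order in both directions gives $Eg\left( \boldsymbol{X}_{0,ki}\right) = Eg\left( \boldsymbol{Y}_{0,ki}\right)$ for every $g$, so $\boldsymbol{X}_0$ and $\boldsymbol{Y}_0$ share the same marginals. Next, for each pair $(k,i) \neq (l,j)$ the bilinear function $f\left( \mathbf{X}\right) = x_{ki} x_{lj}$ is supermodular, its only nonvanishing second partial being the mixed one, equal to $1$. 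Hence $E\left( \boldsymbol{X}_{0,ki} \boldsymbol{X}_{0,lj}\right) \le E\left( \boldsymbol{Y}_{0,ki} \boldsymbol{Y}_{0,lj}\right)$, and since both matrices are centered, Lemma \ref{mean-cov} gives $E\left( {\rm vec}\left( \boldsymbol{X}_0\right) {\rm vec}\left( \boldsymbol{X}_0\right)^T\right) = \overline{\mathbf{\Omega}}$ and the analogue for $\boldsymbol{Y}_0$. Reading off the relevant entries then yields $\omega_{ij} \le \omega'_{ij}$ on the off-diagonal, which finishes this direction.

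The step I expect to be the main obstacle is the technical gap between Theorem \ref{coll}, which is stated only for twice-differentiable $f$ satisfying the decay conditions of Theorem \ref{comp}, and the full, possibly non-smooth, class of supermodular functions that defines $\le_{sm}$. I would bridge it by mollification: convolving a supermodular $f$ with a smooth, nonnegative, compactly supported kernel preserves supermodularity and produces smooth approximants meeting the decay hypotheses, after which a standard limiting argument transfers the inequality to the original $f$. A subsidiary point demanding care is the index bookkeeping relating the matrix position $(k,i),(l,j)$ to the vectorized indices $m = k + n\left( i-1\right)$ and $m' = l + n\left( j-1\right)$ in $\overline{\mathbf{\Omega}}$, so that the phrase \emph{off-diagonal} $\omega_{ij}$ is matched with the correct entries of the correlation matrix and not confused with its unit diagonal.
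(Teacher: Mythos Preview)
Your proposal is correct and follows essentially the same route as the paper: Theorem \ref{coll} for sufficiency, and the test functions $f(\mathbf{X})=x_{ki}x_{lj}$ together with equality of marginals for necessity. The only cosmetic difference is that the paper obtains identical marginals by citing \cite{muller2000some}, whereas you derive it directly via $\pm g(x_{ki})$; your explicit observation that the unit diagonals of $\overline{\mathbf{\Omega}}$ and $\overline{\mathbf{\Omega}'}$ kill the pure second-derivative terms, and your remark on mollification, are points the paper leaves implicit.
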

\begin{proof}
If $\boldsymbol{X}_0 \le_{sm} \boldsymbol{Y}_0$, then $\boldsymbol{X}_0$ and $\boldsymbol{Y}_0$ have 
the same marginals (see \cite{muller2000some}). This
yields $\boldsymbol{\delta} = \boldsymbol{\delta}'$ since 
\begin{equation} \label{XYsim}
\boldsymbol{X}_{0ij} \sim SN_1\left(0,v_{ii}\sigma_{jj}, *,\delta_{i+n\left( j-1\right)}\right) , 
\boldsymbol{Y}_{0ij} \sim SN_1\left(0,v'_{ii}\sigma'_{jj}, *,\delta'_{i+n\left( j-1\right)}\right). 
\end{equation}
We can prove $E\left(\boldsymbol{X}_{0ij}\boldsymbol{X}_{0kl}\right) \le E\left(\boldsymbol{Y}_{0ij}\boldsymbol{Y}_{0kl}\right)$ by letting $f\left( \mathbf{X}\right)=x_{ij}x_{kl}$, which is a supermodular function. 
Then, $E\left({\rm vec}\left(\boldsymbol{X}_0\right){\rm vec}\left(\boldsymbol{X}_0\right)^T\right) \le E\left({\rm vec}\left( \boldsymbol{Y}_0\right){\rm vec}\left( \boldsymbol{Y}_0\right)^T\right)$, which yields that $\omega_{ij} \le \omega'_{ij}$
by using Theorem \ref{mean-cov}. 
\par Conversely, if $\boldsymbol{X}_0$ and $\boldsymbol{Y}_0$ have the same 
marginals and $\omega_{ij} \le \omega'_{ij}$, then $\boldsymbol{X}_0 \le_{sm} \boldsymbol{Y}_0$ can be obtained from 
Theorem \ref{coll}, since $\frac{\partial^2}{\partial x_{ij} \partial x_{kl}} f\left( \mathbf{X}\right) \ge 0$ for
supermodular function $f$ and $\boldsymbol{\delta} = \boldsymbol{\delta}'$. 
\end{proof}
\section{Concluding Remarks} \label{CR6}
In this paper, the characteristic function of the matrix variate skew-normal distribution has been derived, and 
has been utilized to establish integral stochastic orderings of matrix variate skew-normal distributions. 
Considering six different important integral stochastic orderings, some necessary and sufficient conditions have been derived. 
It will naturally be of interest to further generalize these results to matrix variate skew-elliptical family of distributions. 
We are currently working on this problem and hope to report the findings in a future paper.

\section*{Acknowledgements}
This research was supported by the National Natural Science Foundation of China (No. 12071251, 11571198, 11701319). 


\end{document}